\numberwithin{equation}{section}
\newcommand{\ul}{U_-}
\newcommand{\ur}{U_+}
\newcommand{\um}{U_m}
\newcommand{\de}{\delta}
\newcommand{\ep}{\epsilon}
\newcommand{\epr}{\delta_{R_n}}
\newcommand{\eps}{\delta_{S_1}}
\newcommand{\epw}{\delta_0}
\newcommand{\epe}{\epsilon_2}
\newcommand{\ept}{\epsilon_0}
\newcommand{\epl}{\epsilon_1}
\newcommand{\utl}{\Tilde{U}}
\newcommand{\p}{\mathbf{E}}
\newcommand{\x}{\mathbf{X}}
\newcommand{\rt}{R_n}
\newcommand{\sis}{\sigma}
\newcommand{\sio}{{\sigma}_1}
\newcommand{\sit}{\sigma_n}
\newcommand{\cf}{c_f^{(1)}}
\newcommand{\cfs}{c_f^{(n)}}
\newcommand{\cfi}{c_f^{(i)}}
\newcommand{\so}{S_1^{\mathbf{X}_1}}
\newcommand{\st}{S_n^{\mathbf{X}_n}}
\newcommand{\si}{S_i^{\mathbf{X}_i}}
\newcommand{\wn}{W_n^{\x_n}}
\newcommand{\dkoo}{\;\partial_{x_1}{k_{S_1}^{\x_1}}}
\newcommand{\dktt}{\;\partial_{x_1}{k_{S_n}^{\x_n}}}
\newcommand{\dkk}{\;\partial_{x_1}{k_{S_i}^{\x_i}}}
\newcommand{\epww}{\delta_{W_n}}
\newcommand{\epws}{\delta_{S_n}}
\newcommand{\epss}{\delta_{S_i}}
\newcommand{\dkww}{\;\partial_{x_1}{k_{W_n}^{\x_n}}}
\newcommand{\dkrr}{\;\partial_{x_1}{k_{R_n}}}
\newcommand{\asxo}{a_{S_1} ^ {\mathbf{X}_1}}
\newcommand{\aw}{a_{W_n}^{\x_n}}
\newcommand{\mn}{\Tilde{\mu}}
\newcommand{\ro}{\mathbf{r_1}}
\newcommand{\rn}{\mathbf{r_n}}
\newcommand{\lo}{\mathbf{l_1}}
\newcommand{\lnn}{\mathbf{l_n}}
\newcommand{\li}{\mathbf{l_i}}
\newcommand{\voi}{\mathbf{v_i^{(1)}}}
\newcommand{\vti}{\mathbf{v_i^{(n)}}}
\newcommand{\stdy}{\mathbf{S}_i}
\newcommand{\sdy}{{\sigma}_{i}}
\newcommand{\uls}{U_L}
\newcommand{\urs}{U_R}
\newtheorem{theo}{Theorem}[section]
\newtheorem{lemm}[theo]{Lemma}
\newtheorem{prop}[theo]{Proposition}
\newtheorem{assum}[theo]{Assumption}
\title[Time-asymptotic stability of composite planar waves in multi-D]{Time-asymptotic stability of composite weak planar waves for a general $n\times n$ multi-D viscous system}
\date{\today}
\author{Jiayun Meng}
\address{Department of Mathematics, \newline The University of Texas at Austin, Austin, TX 78712, USA}
\thanks{We would like to thank our advisor Alexis Vasseur for suggesting this problem and for valuable discussions and encouragements during the completion of this project.}
\thanks{\textbf{Acknowledgment.}This work was partially funded by NSF-DMS 2306852, NSF-RTG 1840314, and NSF-EPSRC 2219434.}
\email{jiayun@utexas.edu}
\begin{document}

\begin{abstract}
    We prove the time-asymptotic stability of the superposition of a weak planar viscous 1-shock and either a weak planar n-rarefaction or a weak planar viscous n-shock for a general $n\times n$ multi-D viscous system. 
    In 2023, Kang-Vasseur-Wang \cite{MVWns} showed the stability of the superposition of a viscous shock and a rarefaction for 1-D compressible barotropic Navier-Stokes equations and solved a long-standing open problem officially introduced by Matsumura-Nishihara \cite{MNht} in 1992. Our work is an extension of \cite{MVWns}, where a general $n\times n$ multi-D viscous system is studied.
    Same as in \cite{MVWns}, we apply the $a$-contraction method with shifts, an energy based method invented by Kang and Vasseur in \cite{KVa}, for both viscous shock and rarefaction at the level of the solution. In such a way, we can work with general perturbations and compositions of waves. Finally, a technique to classify and control higher-order terms is developed to work in multi-D. 
\end{abstract}
\maketitle
\tableofcontents
\section{Introduction}
We consider a general $n\times n$ multi-D viscous system
\begin{equation} \label{eq for sol}
    \partial_t U + \partial_{x_1} f(U) + \sum_{j=2}^{d} \partial_{x_j} g_j(U) = \sum_{j=1}^{d} \partial_{x_j} \bigl(B_j(U)\partial_{x_j} \eta'(U)\bigl)\, ,  
\end{equation}
where $U: \mathbb{R}_{\geq 0}\times \mathbb{R}\times \mathbb{T}^{d-1}\rightarrow \mathcal{V} \subseteq \mathbb{R}^n$ with an open convex phase space $\mathcal{V}$. We assume the periodic boundary condition in the transverse direction $\mathbb{T}^{d-1}$. The notation for the physical space is $x = (x_1,y)$ with $y$ denoting the transverse direction $\mathbb{T}^{d-1}$.
The flux functions $f, g_j: \mathcal{V}\rightarrow \mathbb{R}^n$ are assumed to be smooth. In addition, the flux function $f$ is assumed to be strictly hyperbolic and genuinely nonlinear. 
The viscosity coefficient matrix $B_j: \mathcal{V} \rightarrow \mathbb{R}^{n\times n}$ is assumed to be smooth. For any $V\in\mathcal{V}$, $B_j$ is assumed to be an $n\times n$ positive definite matrix. We assume that the entropy $\eta: \mathcal{V}\rightarrow \mathbb{R}$ is smooth and strictly convex, and for any $1\leq j\leq d$, there exists an entropy flux $q_j: \mathcal{V}\rightarrow \mathbb{R}$ of $\eta$ such that 
\begin{equation}\label{def of entropy flux}
    \partial_i q_j(U) = 
    \begin{cases}
        \sum_{l=1}^n \partial_l \eta(U) \partial_i (f)_l (U) \, , \textup{ if } j=1 \, , \\
        \sum_{l=1}^n \partial_l \eta(U) \partial_i (g_j)_l (U) \, , \textup{ if } 2\leq j\leq d \, ,
    \end{cases}
\end{equation}
for any $1\leq i\leq n$.

Let us endow the system with an initial value 
\begin{equation*}
    U(0, x) = U_0(x)\, , \quad x\in\mathbb{R}\times \mathbb{T}^{d-1}  \, ,
\end{equation*}
with fixed end states $U_\pm \in \mathbb{R}^n$, i.e., 
\begin{equation} \label{initial value large time}
    U_0(x) \rightarrow U_\pm\, , \quad \textup{as } x_1\rightarrow \pm \infty \, .
\end{equation}
This general framework includes the 3-D barotropic Brenner-Navier-Stokes equations:
\begin{equation}\label{BNS eqs}
    \begin{cases}
        \partial_t \rho + div(\rho v) = 0 \, , \\
        \partial_t \rho u + div(\rho u \otimes v) + \nabla \rho^\gamma = \nu\Delta u \, ,
    \end{cases}
\end{equation}
where $\nu>0$, $\gamma>1$, $\rho$ is the density, $u$ is the velocity, and $v$ is the corrected velocity defined by
\begin{align*}
    v = u - \frac{1}{\rho} \nabla Q'(\rho)\, , \quad \textup{where  } Q(\rho) = \frac{\rho^\gamma}{\gamma-1} \, .
\end{align*}
Such a system was introduced by Brenner as a correction of the Navier-Stokes equations. In \cite{BNS}, Brenner mentioned some extreme situations where the Navier-Stokes equations did not describe the compressible fluid well, and based on those results, Brenner proposed that the specific momentum density of the fluid equals the corrected velocity $v$ (the volume velocity) instead of the velocity $u$ (the mass velocity).

In this paper, we study the long-time asymptotic behavior of solutions of (\ref{eq for sol}) with initial value satisfying (\ref{initial value large time}). This long-time asymptotic behavior is closely related to the following 1-D Riemann problem:
\begin{equation}\label{eq for sol inviscid}
    \partial_t U_1 + \partial_{x_1} f(U_1) = 0\, , 
\end{equation}
with an associated initial value
\begin{equation}\label{inviscid initial value}
    U_1(0, x_1) = 
    \begin{cases}
         U_-  \, , \quad x_1<0 \, ,\\
         U_+  \, , \quad x_1>0 \, .
    \end{cases}
\end{equation}

\noindent
\textbf{Elementary solutions of the 1-D Riemann problem.} 
Since $f$ is genuinely nonlinear, (\ref{eq for sol inviscid}) has two elementary solutions: shock wave and rarefaction wave. 
We denote the eigenvalues of $f'$ as $\lambda_1<...<\lambda_n$ and the corresponding right eigenvectors as $r_1,...,r_n$. 
Let $\ul \in \mathbb{R}^n$. 

For any $1\leq i\leq n$, there exists an integral curve $ \mathcal{R}_i(\ul)$ such that for any $\ur \in \mathcal{R}_i(\ul)$ that is close enough to $\ul$, there exists a solution $\mathbf{R}_i$ to (\ref{eq for sol inviscid}) with initial value (\ref{inviscid initial value}) that is defined by
\begin{equation*}
    \lambda_i\bigl(\mathbf{R}_i(t,x_1)\bigl) = 
    \begin{cases}
        \lambda_i(\ul) \, , \quad x_1<\lambda_i(\ul) t \, , \\
        \frac{x_1}{t} \, , \quad \lambda_i(\ul) t \leq x_1 \leq \lambda_i(\ur) t \, , \\
        \lambda_i(\ur) \, , \quad x_1 > \lambda_i(\ur) t \, , 
    \end{cases}
\end{equation*}
with
\begin{equation*}
    z_i\bigl(\mathbf{R}_i(t,x)\bigl) = z_i(\ul) = z_i(\ur) \, ,
\end{equation*}
for any $i$-Riemann invariant $z_i$. The existence of $(n-1)$ $n$-Riemann invariants can be found in \cite{Sinvariant}. 
We call $\mathbf{R}_i$ the i-rarefaction wave. 

We define the shock set 
\begin{equation*}
    \mathcal{S}(\ul) = \{U | -\sis (U-\ul) + f(U)-f(\ul) = 0 \text{ for a constant } \sis\}\, .
\end{equation*}
In a neighborhood of $\ul$, $\mathcal{S}$ consists of n smooth curves $\mathcal{S}_1(\ul), ..., \mathcal{S}_n(\ul)$ that intersects at $\ul$. We call $\mathcal{S}_i(\ul)$ the i-shock curve.
For any $\ur \in \mathcal{S}_i(\ul)$ that is close enough to $\ul$, there exists a solution $\Tilde{\mathbf{S}}_i$ to (\ref{eq for sol inviscid}) with initial value (\ref{inviscid initial value}) that is defined by
\begin{equation*}
    \Tilde{\mathbf{S}}_i(t,x_1) =
    \begin{cases}
        \ul \, , \quad x_1<\sis t \, , \\
        \ur \, , \quad x_1>\sis t \, . \\
    \end{cases}
\end{equation*}
We call $\Tilde{\mathbf{S}}_i(\ul)$ the i-shock wave and $\sis$ the speed of the i-shock wave. 
In particular, $\sis$ is close to $\lambda_i(\ul)$. We denote $\sis$ as $\sigma_i$.

\noindent
\textbf{Stability of elementary solutions for 1-D viscous systems}. If the Riemann solution to an inviscid system is a rarefaction wave (respectively a shock wave), then the asymptotic state of the solution to the corresponding viscous system with a perturbed initial value is the rarefaction wave (respectively the viscous shock wave). 

While the rarefaction wave spreads much faster than the diffusion process, the jump of the shock is smoothed by the viscosity and becomes a thin transition layer. The viscous i-shock wave $\mathbf{S}_i(x-\sis_i t)$ refers to the smoothened i-shock wave, and $\mathbf{S}_i$ satisfies 
\begin{equation}\label{eq for sol shock}
    -\sis_i \partial_{x_1} \mathbf{S}_i + \partial_{x_1} f(\mathbf{S}_i) = \partial_{x_1} \bigl(B(\mathbf{S}_i)\partial_{x_1} \eta'(\mathbf{S}_i)\bigl)\, .
\end{equation}
Ilin-Oleinik \cite{IOscalar} proved the stability of elementary solutions for the scalar case in 1960, but the maximum principle they used did not work for systems (see \cite{Mht}). Later, the energy method has become the main tool. When studying the stability of rarefaction waves, the energy method can be applied directly at the level of the solutions. We call such a method the direct energy method. The stability of rarefaction waves was first proved by Matsumura-Nishihara \cite{MNho,MNht} for the 1-D compressible Navier-Stokes equations. Later, Liu-Xin \cite{LXstbltyR} and Nishihara-Yang-Zhao \cite{NYZstablyR} pushed the stability result of rarefaction waves to the Navier-Stokes-Fourier system.

However, the direct energy method fails when working with viscous shock waves. The first results were based on the energy method applied at the level of the antiderivative. Matsumura-Nishihara \cite{MNvs} in 1985, and Goodman \cite{Gvs} in 1986 independently proved the stability of viscous shock waves. Matsumura and Nishihara proved the stability for the 1-D compressible barotropic Navier-Stokes system, while Goodman proved the stability for a general 1-D system with a positive definite viscosity. As both papers worked with the antiderivative, they need to assume the zero mass condition, i.e., that the mass of the initial perturbation is zero.

Two very fruitful approaches removed the stringent zero mass condition. The Green's function method was started by Liu \cite{Liu1985} in 1985. It involves a constant shift on the viscous shock and the introduction of a diffusion wave and a coupled diffusion wave in the transverse characteristic fields.
Szepessy-Xin \cite{SXgreen} showed the stability of viscous shock waves for a 1-D general system with a nondegenerate artificial viscosity, and Liu-Zeng \cite{LZgreen} applied the Green's function method to 1-D systems with degenerate viscosity.
Another approach is the Evans function method. In 2004, Mascia-Zumbrun \cite{MZspectral} showed the spectral stability of viscous shock waves for the 1-D compressible Navier-Stokes system. 
The study of spectral stability is very advanced now. In 2017, Humpherys-Lyng-Zumbrun \cite{HLZspectral} proved the spectral stability of large-amplitude planar viscous shock waves for the compressible Navier-Stokes equations in multi-D by the numerical Evans function method. Although this technique gives stability only for single elementary waves, the stability result works in the whole space $\mathbb{R}^d$.

Both Green's function method and Evans function method can deal with more general perturbations and provide pointwise estimates but fail to give global-in-time stability results for composite waves. Assuming the strengths of the two viscous shock waves are suitably small with the same order,
Huang-Matsumura \cite{HMtwovs} showed the stability of two viscous shock waves for the 1-D Navier-Stokes-Fourier equations under a more relaxed condition than the zero mass condition.

\noindent
\textbf{Matsumura Conjecture}. Even in 1-D, the stability of the composition of a viscous shock wave and a rarefaction wave is very difficult to study. Matsumura-Nishihara \cite{MNho} mentioned the problem in 1986 and officially introduced it as an open problem in 1992 in \cite{MNht}. In 2018, Matsumura \cite{Mht} classified the problem as a very hard open problem. There are two difficulties. First, the direct energy method used to study the stability of rarefaction waves does not match very well with the methods developed for viscous shock waves. At the same time, the rarefaction wave is not an exact solution to the viscous system and any spatial shift of the rarefaction wave has the same asymptotic state. Hence, it is hard to analyze the interaction between the rarefaction wave and the viscous shock wave.

The breakthrough happened in 2023. Using the $a$-contraction method with shifts, 
Kang-Vasseur-Wang proved the stability of the composition of a viscous shock wave and a rarefaction wave for the 1-D compressible barotropic Navier-Stokes equations in \cite{MVWns} and the stability of the generic Riemann solutions for the 1-D compressible Navier-Stokes-Fourier equations in \cite{KVWnsfgeneric}.
The $a$-contraction method with shifts is an energy method that can be applied at the level of the solution for contact waves, rarefaction waves, and viscous shock waves, so it unifies the methods for elementary waves. This paper is an extension of their work \cite{MVWns}, in which the stability of a planar viscous 1-shock wave and either a planar n-rarefaction wave or a planar viscous n-shock wave is proved for a general $n\times n$ multi-D viscous system. Note that we consider here only extremal waves.

\noindent
\textbf{Multi-D results}. 
The stability of planar rarefaction waves for the 3-D compressible Navier-Stokes-Fourier system was shown by Li-Wang-Wang \cite{LWWstabilityR} in 2018. 
In 2023, the stability of planar viscous shock waves for the 3-D compressible Navier-Stokes equations was proved by Wang-Wang \cite{WWstabilitys}. Later in 2024, Kang-Lee \cite{KLstabibilitys} generalized Wang-Wang's stability result to the compositions of planar viscous shocks for the same system. 
Note that all these multi-D results are based on a-contraction method with shifts and work only with periodic transversal variables and weak elementary waves. 

\noindent
\textbf{Result of the paper}.
Let $m$ be the smallest integer that is strictly bigger than $\frac{d}{2}$, i.e., 
\begin{equation}\label{value of m}
    m=
    \begin{cases}
        \frac{d+1}{2} \, , \textup{ if } d \textup{ is odd,} \\
        \frac{d+2}{2} \, , \textup{ if } d \textup{ is even.}
    \end{cases}
\end{equation}
The main result of the paper is the following theorem.
\begin{theo}\label{thm}
For any $\ul \in \mathbb{R}^n$, there exist constants $\epw, \ept>0$ such that the following is true.

\noindent
Let $\um \in \mathcal{S}_1(\ul)$ and $\ur \in \mathcal{S}_n(\um)$ or $\mathcal{R}_n(\um)$ be such that
\begin{equation*}
    |\um-\ul| + |\ur-\um| < \epw \, .
\end{equation*}
Let $\mathbf{S}_1$ be the viscous 1-shock wave solution to (\ref{eq for sol shock}) with end states $\ul$ and $\um$, and $\mathbf{W}_n$ be the viscous n-shock wave solution $\mathbf{S}_n$ to (\ref{eq for sol shock}) or the n-rarefaction wave solution $\mathbf{R}_n$ to (\ref{eq for sol inviscid}) with end states $\um$ and $\ur$. Let $\mathcal{I}_{\mathbf{W}_n}$ be $\{1,n\}$ if $\mathbf{W}_n=\mathbf{S}_n$ and be $\{1\}$ if $\mathbf{W}_n=\mathbf{R}_n$.

\noindent
Let $U_0$ be an initial value such that 
\begin{equation}\label{thm initial data}
    \sum_\pm \|U_0-U_\pm\|_{L^2(\mathbb{R}_\pm\times \mathbb{T}^{d-1})} + \|U_0\|_{\dot{H}^m(\mathbb{R}\times\mathbb{T}^{d-1})}< \ept \, ,
\end{equation}
where $\mathbb{R}_+=-\mathbb{R}_-=(0,\infty)$.

\noindent
Then the viscous system (\ref{eq for sol}) has a unique global-in-time solution $U$. Moreover, there exist absolutely continuous shifts $\x_i(t)$ for $i\in\mathcal{I}_{\mathbf{W}_n}$ such that
\begin{align}
    &U(t,x) - \mathbf{\utl}(t,x) \in C\bigl(0,\infty;H^m(\mathbb{R}\times\mathbb{T}^{d-1})\bigl) \, , \label{thm continuity} \\
    &\underset{x\in \mathbb{R}\times\mathbb{T}^{d-1}}{\textup{sup}}\, |U(t,x)- \mathbf{\utl}(t,x)| \rightarrow 0 \; \text{ as } \; t\rightarrow 0 \, , \label{thm pointwise convergence}
\end{align}
where
\begin{equation*}
    \mathbf{\utl}(t,x) = 
    \begin{cases}
        \mathbf{S}_1\bigl(x_1-\sio t + \x_1(t)\bigl) + \mathbf{S}_n\bigl(x_1-\sit t + \x_n(t)\bigl) -\um \, , \textup{ if } \mathbf{W}_n=\mathbf{S}_n \, ,\\
        \mathbf{S}_1\bigl(x_1-\sio t + \x_1(t)\bigl) + \mathbf{R}_n\bigl(\frac{x_1}{t}\bigl) -\um \, , \textup{ if } \mathbf{W}_n=\mathbf{R}_n \, .
    \end{cases}
\end{equation*}
In addition,
\begin{align}
    U -\mathbf{\utl} \in L^2\bigl(0,\infty;\dot{H}^{m+1}(\mathbb{R}\times\mathbb{T}^{d-1})\bigl) \, , \label{thm second deri}
\end{align}
and for any $i \in \mathcal{I}_{\mathbf{W}_n}$,
\begin{equation} \label{thm X shift}
    \underset{t\rightarrow \infty}{\textup{lim}} \dot{\mathbf{X}}_i(t)= 0 \, .
\end{equation}
\end{theo}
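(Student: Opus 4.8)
The plan is to follow the $a$-contraction with shifts scheme of \cite{MVWns}, extended to the multi-D $n\times n$ setting, combining it with a standard continuation/bootstrap argument. First I would set up the ansatz $\utlb$ as in the statement and derive the equation satisfied by the perturbation $\phi = U - \utlb$. Because the $1$-shock $\mathbf{S}_1$ and the $n$-wave $\mathbf{W}_n$ are spatially well separated by a distance growing like $(\sit-\sio)t$, the interaction errors between the two waves are exponentially small (the viscous shock profiles decay exponentially, the rarefaction is handled by the Liu--Xin / Matsumura--Nishihara smooth-approximation construction). For the rarefaction part I would use a smoothed approximate rarefaction $\mathbf{R}_n$ solving the inviscid equation and collect the usual $O(1/(1+t))$-type error terms, together with the classical $L^\infty$ and $L^p$ decay estimates for approximate rarefactions.

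The core step is the relative-entropy (weighted $L^2$) estimate at the level of the solution. I would introduce weight functions $a_i$ associated to each shock in $\mathcal{I}_{\mathbf{W}_n}$ and shift functions $\x_i(t)$ defined through an ODE of the form $\dot{\x}_i = -M_i(\ \text{localized weighted flux of }\phi\ )$, chosen so that the bad linear cross terms in the energy identity are absorbed. Differentiating the weighted relative entropy $\int a\,\eta(U|\utlb)$ in time produces: (i) a good negative term $-\int a\,|\partial_{x_1}(\text{profile})|\,|\phi|^2$ from genuine nonlinearity of the shocks (the $a$-contraction gain), (ii) good dissipation $-\sum_j\int B_j|\partial_{x_j}\phi|^2$ from the viscosity, (iii) shift-controlled terms that vanish by the choice of $\x_i$ up to quadratic remainders, (iv) rarefaction-induced terms that are either good (the $n$-rarefaction is expansive, giving an extra favorable sign) or time-integrable errors, and (v) higher-order terms. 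The transverse periodicity is essential here: Poincaré on $\mathbb{T}^{d-1}$ lets me control the zero-transverse-frequency and nonzero-frequency parts separately, and the weak-wave hypothesis makes all the shock-related bad terms small relative to the good ones.

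The higher-order estimates proceed by applying $\partial^\alpha$ for $|\alpha|\le m$ (with $m>d/2$ so that $H^m\hookrightarrow L^\infty$ closes the nonlinear terms), getting $\dot{H}^1$ through $\dot{H}^{m+1}$ dissipation. Combined with the zeroth-order weighted estimate, a continuation argument on the a priori smallness of $\|\phi\|_{H^m}$ gives global existence, (\ref{thm continuity}), (\ref{thm second deri}); then $\|\phi\|_{H^m}\in L^2_t$ together with boundedness of $\frac{d}{dt}\|\phi\|_{H^m}^2$ yields $\|\phi\|_{H^m}\to0$, hence (\ref{thm pointwise convergence}) by Sobolev embedding, and the shift ODE plus the decay of $\phi$ gives (\ref{thm X shift}).

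\textbf{Main obstacle.} The hardest part will be classifying and controlling the higher-order terms in multi-D: unlike the 1-D barotropic case of \cite{MVWns}, here one must track mixed $x_1$/transverse derivatives hitting the two superposed profiles (which depend only on $x_1$), the general $n\times n$ entropy structure, and the interplay between the shock's one-dimensional dissipation (only $\partial_{x_1}$ is controlled near the shock layer by the $a$-contraction term) and the full Laplacian-type viscous dissipation. Organizing these into "good", "shift-absorbed", "exponentially-small interaction", "time-integrable rarefaction", and "genuinely higher-order" buckets — and verifying that the last bucket is quadratically small in $\|\phi\|_{H^m}$ uniformly — is the technical heart, and is presumably where the paper's announced "technique to classify and control higher-order terms" does its work.
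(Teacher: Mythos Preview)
Your overall architecture is right and matches the paper: local existence, a priori $H^m$ estimates via weighted relative entropy plus higher-order energy, continuation, then the classical $g,g'\in L^1_t$ argument for decay. But the description of the $a$-contraction mechanism for the shock contains a genuine conceptual error that would make the $L^2$ step fail if carried out as written.

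You claim that differentiating $\int a\,\eta(U|\utlb)$ produces ``a good negative term $-\int a\,|\partial_{x_1}(\text{profile})|\,|\phi|^2$ from genuine nonlinearity of the shocks.'' This is false: for a viscous shock the hyperbolic remainder in the shock's own characteristic direction has the \emph{wrong} sign. In the paper's notation (Lemma~\ref{lemma rem}), the weight $a$ does not make $\mathcal{H}_{S_i}$ negative; it makes the \emph{transverse-in-phase-space} terms $\mathcal{H}_C$ negative (the ``hyperbolic scalarization''), while the scalar remainder $\mathcal{H}_{S_i}=-\cfi\epss\int \mn_i^2\,\partial_{x_1}k_{S_i}^{\x_i}\,dx$ stays strictly positive. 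Controlling this positive term is the whole point of Lemma~\ref{lemma parabolic}: after the change of variables $z=k_{S_i}^{\x_i}$ one applies the weighted Poincar\'e inequality of Lemma~\ref{Poincare inequality lemma} on $[0,1]$ (not the torus Poincar\'e you invoke), which trades $\mathcal{H}_{S_i}$ for the viscous term $\mathcal{D}_{x_1}^{p,i}$ plus a mean-value term; the shift ODE is then designed precisely so that $\dot{\x}_i\mathcal{Y}_i$ absorbs that mean. The torus Poincar\'e is used only as a secondary step inside Lemma~\ref{lemma parabolic} to pass from $\int(\cdot)^2\,dy$ to $(\int\cdot\,dx)^2+\mathcal{D}_y$. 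Without this scalarization--Poincar\'e--shift chain, your item (i) does not exist and the estimate does not close.

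A smaller point: your decay argument asserts $\|\phi\|_{H^m}\in L^2_t$, but the $L^2$ norm of $\phi$ itself is never time-integrable. What you actually get is $\|\partial_x^\beta\phi\|_{L^2}^2\in L^1_t$ for $1\le|\beta|\le m$ (these sit inside $D_{|\beta|-1}$), together with $\frac{d}{dt}\|\partial_x^\beta\phi\|_{L^2}^2\in L^1_t$ from the second estimate in Proposition~\ref{prop energy estimate}; then Gagliardo--Nirenberg interpolation with the bounded $L^2$ norm gives the $L^\infty$ decay.
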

\noindent
\textit{Remark.} Theorem \ref{thm} shows that if $\ul$ and $\ur$ are connected by a composition of a planar viscous 1-shock wave and either a planar viscous n-shock wave or a planar n-rarefaction wave, then the asymptotic state of the solution to the viscous system (\ref{eq for sol}) with a perturbed initial value is the composition with viscous shocks shifted. 
\begin{prop}\label{prop BNS}
The 3-D barotropic Brenner-Navier-Stokes equations (\ref{BNS eqs}) can be transformed into the form of the viscous system (\ref{eq for sol}).
\end{prop}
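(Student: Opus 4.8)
The plan is to exhibit an explicit change of unknown together with explicit data $f$, $g_j$, $B_j$, $\eta$, $q_j$ realizing (\ref{BNS eqs}) as an instance of (\ref{eq for sol}) (here $d=3$, $n=4$), and then to verify the structural hypotheses attached to (\ref{eq for sol}) one at a time. I would take as new unknown the vector of conserved densities $U=(\rho,\rho u)$, with phase space $\mathcal{V}=\{(\rho,m)\in\mathbb{R}^4:\rho>0\}$, which is open and convex. For $1\le j\le 3$ let $f=g_1$ and let $g_j$ be the barotropic Euler flux in the $x_j$-direction, with density component $m_j$ and momentum components $\rho u_i u_j+\delta_{ij}\,p(\rho)$, where the pressure is $p(\rho)=\rho Q'(\rho)-Q(\rho)=\rho^\gamma$; these fluxes are smooth on $\mathcal{V}$.

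The first step is to move Brenner's correction into the parabolic part. Since $v=u-\rho^{-1}\nabla Q'(\rho)$, we have $\rho v=\rho u-\nabla Q'(\rho)$ and $\rho\,u\otimes v=\rho\,u\otimes u-u\otimes\nabla Q'(\rho)$, so the continuity equation becomes
\[
  \partial_t\rho+\mathrm{div}(\rho u)=\Delta Q'(\rho)=\sum_{j=1}^{3}\partial_{x_j}\partial_{x_j}\bigl(Q'(\rho)\bigr),
\]
while the $i$-th momentum equation, after writing its right-hand side $\nu\Delta u_i+\mathrm{div}\bigl(u_i\,\nabla Q'(\rho)\bigr)$ in divergence form, becomes
\[
  \partial_t(\rho u_i)+\mathrm{div}(\rho u_i\,u)+\partial_{x_i}p=\sum_{j=1}^{3}\partial_{x_j}\bigl(\nu\,\partial_{x_j}u_i+u_i\,\partial_{x_j}(Q'(\rho))\bigr).
\]
Both right-hand sides are now genuinely second order and carry no cross-derivatives, so the system already has the shape of (\ref{eq for sol}) with one and the same second-order operator acting in each direction $x_j$.

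It remains to recognize those second-order terms as $\sum_j\partial_{x_j}\bigl(B_j(U)\,\partial_{x_j}\eta'(U)\bigr)$ for a strictly convex entropy $\eta$. I would take the mechanical energy $\eta(U)=\tfrac12|m|^2/\rho+Q(\rho)$, which is smooth and strictly convex on $\mathcal{V}$ since $Q''>0$; then $\eta'(U)=\bigl(Q'(\rho)-\tfrac12|u|^2,\ u\bigr)$, and $q_j(U)=u_j\bigl(\eta(U)+p(\rho)\bigr)$ is the classical energy flux, which satisfies (\ref{def of entropy flux}). Setting, for every $j$,
\[
  B_j=B(U)=\begin{pmatrix}1 & u^{T}\\[2pt] u & \nu I_3+u\otimes u\end{pmatrix},
\]
and using $\partial_{x_j}\bigl(Q'(\rho)-\tfrac12|u|^2\bigr)=Q''(\rho)\,\partial_{x_j}\rho-u\cdot\partial_{x_j}u$, a short computation — in which the off-diagonal $u$'s and the $u\otimes u$ block cancel the spurious $u\cdot\partial_{x_j}u$ contributions — shows that $B(U)\,\partial_{x_j}\eta'(U)$ has density component $Q''(\rho)\,\partial_{x_j}\rho=\partial_{x_j}(Q'(\rho))$ and momentum components $\nu\,\partial_{x_j}u_i+u_i\,\partial_{x_j}(Q'(\rho))$, matching exactly the two displays above. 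Finally $B(U)$ is smooth and symmetric on $\mathcal{V}$, its $(1,1)$-entry is $1>0$, and its Schur complement is $\nu I_3+u\otimes u-u\,u^{T}=\nu I_3>0$, so $B(U)$ is positive definite for every $U\in\mathcal{V}$.

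I expect the genuinely nontrivial point to be this positive-definiteness, and more precisely the role of the correction: for the plain barotropic Navier--Stokes system the continuity equation carries no dissipation, so the natural viscosity matrix is only positive semi-definite and (\ref{eq for sol}) does not apply; Brenner's substitution $v=u-\rho^{-1}\nabla Q'(\rho)$ is exactly what injects the density diffusion $\Delta Q'(\rho)$ together with the cross term $\mathrm{div}(u\otimes\nabla Q'(\rho))$, and the Schur-complement identity above is what certifies that these assemble into an honest positive-definite $B$ rather than something degenerate or sign-indefinite. A secondary point to record is that the $x_1$-flux is not strictly hyperbolic once $d\ge2$ --- the transport speed $u_1$ is an eigenvalue of multiplicity $d-1$ --- so the hypothesis on $f$ has to be read as concerning its two extremal fields, whose speeds $u_1\mp c(\rho)$, with $c(\rho)=\sqrt{p'(\rho)}=\sqrt{\gamma}\,\rho^{(\gamma-1)/2}$, are simple and genuinely nonlinear for $\gamma>1$; these are precisely the fields along which the $1$-shock and the $n$-wave of Theorem \ref{thm} propagate, so nothing more is needed.
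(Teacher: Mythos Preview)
Your proposal is correct and follows essentially the same route as the paper: take $U=(\rho,\rho u)$, the mechanical energy $\eta=\rho|u|^2/2+Q(\rho)$, rewrite Brenner's correction as second-order terms, and read off the same matrix $B_j(U)$. The only notable differences are cosmetic improvements on your side: you certify positive definiteness of $B$ via the Schur complement $\nu I_3$, whereas the paper simply states $\det B_j=\nu^3$ (which, taken alone, is not quite sufficient for a symmetric matrix without invoking Sylvester's criterion); and you flag that the $x_1$-flux of the $4\times4$ barotropic Euler system is not strictly hyperbolic because $u_1$ has multiplicity $d-1=2$, a point the paper's appendix does not address.
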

\noindent
\textit{Remark.} The detailed transformation is in the appendix. Proposition \ref{prop BNS} implies that Theorem \ref{thm} can be applied to the 3-D barotropic Brenner-Navier-Stokes equations. 

\noindent
\textbf{Structure of the paper}. 
Section \ref{sec preliminaries} discusses the properties of the viscous shock wave and the approximate rarefaction wave and introduces the weight functions, shift functions, and the superposition wave. 

In section \ref{proof of theorem}, we show how Theorem \ref{thm} is proved by local-in-time estimates and a priori estimates. Proposition \ref{existence prop} provides local-in-time estimates which could be shown in the same way as previous work, while a priori estimates are given in Proposition \ref{prop energy estimate} which is to be shown in sections \ref{section energy estimates} and \ref{H^m contraction}. 

We get the $L^2$ estimates by the $a$-contraction method with shifts in section \ref{section energy estimates} and go from the $L^2$ estimates to the $H^m$ estimates (a priori estimates) by induction in section \ref{H^m contraction}.

\noindent
\textbf{The $a$-contraction method with shifts}. 
The method of $a$-contraction relies on the ad-hoc construction of the shifts $\x_i$ for $i\in \mathcal{I}_{\mathbf{W}_n}$ solving special ODEs. For the scalar case, the method can be applied directly on the $L^2$ norm (see \cite{KVa}). However, it was shown that the result is not true for systems in \cite{SVsys}. To work on systems, we need to introduce weight functions $a_{\mathbf{S}_1}, a_{\mathbf{W}_n}$. In this paper, we follow the method of \cite{MVWns} written for the special case of the 1-D compressible barotropic Navier-Stokes equations. Our extension allows to clarify and explain why the method works at a deeper level. 

To get the $L^2$ estimates, we study the evolution of a pseudo-distance given by the physical structure of the problem
\begin{align*}
    \frac{d}{dt}\int (a_{\mathbf{S}_1} + a_{\mathbf{W}_n}) \, \eta(U|\mathbf{\utl}) \, dx \, ,
\end{align*}
where the relative entropy $\eta(\cdot|\cdot)$ is defined by
\begin{equation}\label{eq for REF}
    \eta(U|V) = \eta(U) - \eta(V) - \eta'(V)(U-V) \, .
\end{equation}
We define two bases at the beginning of subsection \ref{subsec relative entropy}. The basis (\ref{decomposition of S_1}) (respectively (\ref{decomposition of W})) is designed for the wave $\mathbf{S}_1$ (respectively $\mathbf{W}_n$). It contains the special direction $r_1(\ul)$ (respectively $r_n(\ul)$) of the wave $\mathbf{S}_1$ (respectively $\mathbf{W}_n$) and is orthogonal with respect to the viscous matrix $B_1$ in order to be consistent with the viscosity.

In Lemma \ref{lemma rem}, we apply the relative entropy method introduced by Dafermos \cite{Darem} and DiPerna \cite{Direm} and project the perturbation $U-\mathbf{\utl}$ onto the bases. We get
\begin{align*}
    \frac{d}{dt}\int (a_{\mathbf{S}_1} + a_{\mathbf{W}_n}) \, \eta(U|\utl) \, dx \leq \mathcal{Z}(U) - \mathcal{D}(U) + \mathcal{H}(U) + \mathcal{E}(U) \, .
\end{align*}
The shift term $\mathcal{Z}$ represents the new terms induced by the shifts $\x_i$ for $i\in \mathcal{I}_{\mathbf{W}_n}$. The viscosity operation (the right-hand side of (\ref{eq for sol})) gives the viscous term $\mathcal{D}$. The hyperbolic term $\mathcal{H}$ comes from the flux functions $f,g_j$. The interaction between waves $\mathbf{S}_1$ and $\mathbf{W}_n$ creates the interaction term $\mathcal{E}$.

Lemma \ref{lemma rem} discusses the hyperbolic ``scalarization".
The weight function $a_{\mathbf{S}_1}$ (respectively $a_{\mathbf{W}_n}$) activates the spectral gap, creates new negative hyperbolic terms, and initiates cancellation for hyperbolic terms corresponding to the perturbation $U-\mathbf{\utl}$ in all directions except the special direction $r_1(\ul)$ (respectively $r_n(\ul)$) of the wave $\mathbf{S}_1$ (respectively $\mathbf{W}_n$). We get
\begin{align*}
    \mathcal{H}(U) \leq -C_{a} \mathcal{H}_C(U) + C \mathcal{H}_{\mathbf{S}_1}(U) + C \mathcal{H}_{\mathbf{W}_n}(U) \, ,
\end{align*}
where $C_a>0$ is a constant that depends on the strengths of $a_{\mathbf{S}_1}, a_{\mathbf{W}_n}$, and $C>0$ represents the constants that depend only on $B_j,f,\eta,\ul$. $\mathcal{H}_{\mathbf{S}_1}$ (respectively $\mathcal{H}_{\mathbf{W}_n}$) is the hyperbolic term corresponding to the projection of the perturbation in the special direction $r_1(\ul)$ (respectively $r_n(\ul)$) of the wave $\mathbf{S}_1$ (respectively $\mathbf{W}_n$), and $\mathcal{H}_{C}$ denotes the sum of the absolute value of the hyperbolic terms corresponding to the other orthogonal directions of the perturbation. We see the hyperbolic ``scalarization" scalarizes the problem to the special direction $r_1(\ul)$ (respectively $r_n(\ul)$) of the wave $\mathbf{S}_1$ (respectively $\mathbf{W}_n$).

The hyperbolic remainder due to the perturbation in the special direction $r_n(\ul)$ of the rarefaction wave $\mathbf{R}_n$ is negative, so the rarefaction wave $\mathbf{R}_n$ is contractive at the hyperbolic level. However, the hyperbolic remainder due to the perturbation in the special direction of a viscous shock wave is positive and needs to be depleted using the viscous term $\mathcal{D}$. This is done by introducing a Poincar\'{e} type inequality in Lemma \ref{lemma parabolic}. 
For $i\in \mathcal{I}_{\mathbf{W}_n}$, we show 
\begin{align*}
    \mathcal{H}_{\mathbf{S}_i}(U) \lesssim \mathcal{D}(U) - \mathcal{Z}(U) + \mathcal{H}_C(U)\, .
\end{align*}
The viscous term $\mathcal{D}$ controls the $L^2$ norm of the derivative, while the strengths of the shifts $\x_i$ for $i\in \mathcal{I}_{\mathbf{W}_n}$ are chosen to be big enough that the shift term $\mathcal{Z}$ handles the average with the help of $\mathcal{H}_C$. 

In Lemma \ref{lemma L^2 estimate}, we choose weight functions $a_{\mathbf{S}_1}, a_{\mathbf{W}_n}$ and shift functions $\x_i$ for $i\in \mathcal{I}_{\mathbf{W}_n}$ that work for both the hyperbolic ``scalarization" and the Poincar\'{e} type inequality. We get
\begin{align*}
    \mathcal{Z}(U) - \mathcal{D}(U) + \mathcal{H}(U) \leq 0 \, .
\end{align*}
By Lemma \ref{property of rarefaction} and Lemma \ref{interaction bound}, we can be bound the interaction term $\mathcal{E}(U)$ by a small and time integrable function depending on the strengths of the waves $\mathbf{S}_1, \mathbf{W}_n$. In all, we obtain the $L^2$ estimates at the end of section \ref{section energy estimates}. 

\noindent
\textit{Remark}. In Lemma \ref{lemma parabolic}, we reduce the Poincar\'{e} type inequality to the following lemma proved in \cite{KVpoincare}.
\begin{lemm}\label{Poincare inequality lemma}
    For any $f:[0,1]\rightarrow\mathbb{R}$ satisfying $\int_0^1 y (1-y)|f'|^2 \, dy < \infty$,
    \begin{equation}
        \int_0^1 |f-\int_0^1 f \, dy|^2 \, dy \leq \frac{1}{2} \int_0^1 y (1-y)|f'|^2 \, dy \, .
    \end{equation}
\end{lemm}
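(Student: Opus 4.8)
The plan is to prove the inequality by a symmetrization trick combined with the Cauchy--Schwarz inequality, so that everything reduces to a single explicit weight computation. Set $\bar f := \int_0^1 f\,dy$. The starting point is the elementary identity
\begin{equation*}
  \int_0^1 \bigl|f-\bar f\bigr|^2 \, dy \;=\; \frac{1}{2}\int_0^1\!\!\int_0^1 |f(x)-f(y)|^2 \, dx \, dy,
\end{equation*}
obtained by expanding the square on the right and using $\int_0^1 dy = 1$; hence it suffices to estimate the double integral.

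For $0\le x<y\le 1$ one has $f(y)-f(x)=\int_x^y f'(s)\,ds$, and the Cauchy--Schwarz inequality gives $|f(y)-f(x)|^2\le (y-x)\int_x^y |f'(s)|^2\,ds$. Inserting this and exploiting the $x\leftrightarrow y$ symmetry,
\begin{equation*}
  \frac{1}{2}\int_0^1\!\!\int_0^1 |f(x)-f(y)|^2 \, dx\,dy \;\le\; \iint_{0\le x<y\le 1} (y-x)\Bigl(\int_x^y |f'(s)|^2\,ds\Bigr)\,dx\,dy .
\end{equation*}
Every integrand here is nonnegative, so Tonelli's theorem permits interchanging the order of integration: viewing the right-hand side as a triple integral over $\{(x,y,s):0\le x<s<y\le 1\}$ with integrand $(y-x)|f'(s)|^2$, I integrate first in $(x,y)\in(0,s)\times(s,1)$ for fixed $s$. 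A short direct computation gives
\begin{equation*}
  \int_0^s\!\!\int_s^1 (y-x)\,dy\,dx \;=\; \frac{s(1-s)}{2},
\end{equation*}
so the double integral is bounded by $\frac{1}{2}\int_0^1 s(1-s)|f'(s)|^2\,ds$, which is exactly the claim.

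The only delicate point is the regularity of $f$ and the justification of these manipulations near the endpoints: the hypothesis $\int_0^1 y(1-y)|f'|^2\,dy<\infty$ only yields $f\in H^1_{\mathrm{loc}}(0,1)$, and $f$ need not extend continuously to $\{0,1\}$. I would handle this by running the argument on $[\varepsilon,1-\varepsilon]$ against the mean over that subinterval and letting $\varepsilon\to 0$, using monotone convergence on the right-hand side; the identity above then also shows a posteriori that $f-\bar f\in L^2(0,1)$, and every exchange of integrals is legitimate because all integrands are nonnegative. I expect this endpoint bookkeeping to be the only real obstacle, since the heart of the matter is the one-line weight computation, and it is precisely the value $s(1-s)/2$ that fixes the constant at $\tfrac12$. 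The constant is sharp: equality holds for affine $f$, consistent with the fact that $y-\tfrac12$ is the first nonconstant eigenfunction of $-\partial_y\bigl(y(1-y)\partial_y\,\cdot\,\bigr)$ with eigenvalue $2$; this also indicates an alternative proof via the spectral gap of that Jacobi-type operator, but the symmetrization route is preferred here as it is entirely self-contained.
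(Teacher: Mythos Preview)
Your argument is correct. The symmetrization identity, the Cauchy--Schwarz step, and the Tonelli interchange all go through as written, and the key computation
\[
  \int_0^s\!\!\int_s^1 (y-x)\,dy\,dx \;=\; \frac{1-(1-s)^3-s^3}{6} \;=\; \frac{s(1-s)}{2}
\]
is exactly right, yielding the sharp constant $\tfrac12$. Your remark that equality holds for affine $f$ is also correct and confirms optimality.

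As for comparison with the paper: the paper does not supply its own proof of this lemma at all. It simply quotes the inequality and cites the reference \cite{KVpoincare} (Kang--Vasseur), so there is nothing in the present paper to compare your approach against. Your symmetrization-plus-Fubini proof is a clean, fully self-contained alternative to citing the external result; it has the advantage of making the origin of the weight $y(1-y)$ and of the constant $\tfrac12$ completely transparent through the single explicit double integral above. The endpoint-regularity bookkeeping you flag is indeed the only place requiring care, and the truncation-and-monotone-convergence scheme you describe handles it.
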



\noindent
\textbf{Notations and a remark.} Before we go to the proofs, let us fix some notations. Let the eigenvalues of $f'(\ul)$ be $\lambda_1<...<\lambda_n$. For any $1\leq i\leq n$, let $\mathbf{r_i}$ and $\li$ be the right and left eigenvectors corresponding to $\lambda_i$ such that $\mathbf{r_i}$ is tangent to $\mathcal{S}_i(\ul)$, $\li=\eta''(\ul)\mathbf{r_i}$, and $\mathbf{r_i}\cdot \li=1$.
We define
\begin{equation} \label{r curve opposite s curve}
    \begin{aligned}
        \cfi :=( f''(\ul) : \mathbf{r_i} \otimes \mathbf{r_i}) \cdot \li &= \lambda_i '(\ul) \cdot \mathbf{r_i} < 0 \, .
    \end{aligned}
\end{equation}
Let $C$ denote the positive constants that depend only on $B_j, f, \eta, \ul$. For $\alpha = (a_1,...,a_d)\in \mathbb{N}^d$, we define
\begin{align}\label{notation of alpha}
    (\alpha)_j = a_j \, \textup{ for any }\,  1\leq j\leq d \; \textup{ and }\;  \partial_x^{\alpha} = \partial_{x_1}^{a_1}...\partial_{x_d}^{a_d} \, .
\end{align}
As $\eta'' f'$ is symmetry, we know
\begin{equation} \label{trick eta'' f'}
    \eta''(\ul) \mathbf{r_i} \cdot \mathbf{r_j} = 0 \text{ if } i\neq j \, .
\end{equation}

\section{Preliminaries}\label{sec preliminaries}
\subsection{Viscous shock wave}
Let $i\in \{1,n\}$. We examine the viscous i-shock wave $\stdy$ satisfying
\begin{equation} \label{eq for viscous shock}
    \begin{cases}
        -\sdy \partial_{x_1} \stdy + \partial_{x_1} f(\stdy) = \partial_{x_1}\bigl(B_1(\stdy)\partial_{x_1} \eta'(\stdy)\bigl) \, , \\
        \stdy(-\infty) = \uls \, ,\; \stdy(+\infty) = \urs \, .
    \end{cases}
\end{equation}
Recall the Rankine-Hugoniot condition and the Lax inequality 
\begin{equation*}
    \lambda_1(\urs)< \sdy< \lambda_1(\uls) \, .
\end{equation*}
Let the wave strength of $\stdy$ be
\begin{align*}
    \epss = (\urs - \uls) \cdot \li \, .
\end{align*}
The proof of the existence of the viscous i-shock wave can be found in \cite{MajdaSexist}.
The following results are proved in \cite{Preparation}. 
\begin{lemm}\label{pointwise bound of shock}
    For any $\uls \in \mathbb{R}^n$, there exist $\ep, C>0$ and $C_j>0$ for any $j\geq 2$ such that the following is true.

    \noindent
    For any $\urs \in \mathcal{S}_i(\uls)$ such that $|\uls-\urs| < \ep$, there exists a unique solution $\stdy$ to (\ref{eq for viscous shock}) such that
    \begin{align*}
        |\stdy({x_1})-\uls| &\leq C \epss e^{-C\epss|{x_1}|}, \quad \forall {x_1}< 0 \, , \\
        |\stdy({x_1})-\urs| &\leq C \epss e^{-C\epss|{x_1}|}, \quad \forall {x_1}> 0 \, , \\
        |\partial_{x_1} \stdy| &\leq C \epss^2 e^{-C\epss|{x_1}|}, \quad \forall {x_1} \in \mathbb{R}\, , \\
        |\partial_{x_1}^j \stdy| &\leq C_j \epss |\partial_{x_1} \stdy|, \quad \forall {x_1}\in \mathbb{R}\, , \; \forall j \geq 2 \, .
    \end{align*}
\end{lemm}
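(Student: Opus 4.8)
The plan is to reduce the profile ODE to a scalar equation along the shock curve and then extract exponential decay from a linearized analysis at the endpoints. First I would rewrite the traveling-wave system \eqref{eq for viscous shock}: integrating once in $x_1$ and using the Rankine--Hugoniot condition at $\uls$ gives the first-order system
\begin{equation*}
    B_1(\stdy)\partial_{x_1}\eta'(\stdy) = f(\stdy) - f(\uls) - \sdy(\stdy-\uls) =: F(\stdy) \, ,
\end{equation*}
so that $\partial_{x_1}\stdy = \bigl(\eta''(\stdy)\bigr)^{-1} B_1(\stdy)^{-1} F(\stdy)$, using that $\eta''$ is positive definite (strict convexity) and $B_1$ is positive definite. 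The critical points of this vector field are exactly the zeros of $F$, i.e. the points of the shock set $\mathcal S(\uls)$ with speed $\sdy$; for $\urs$ close to $\uls$ these are the two endpoints $\uls,\urs$ and the connecting orbit is a heteroclinic between them. Existence of this orbit is quoted from \cite{MajdaSexist}; what remains is the quantitative decay.

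The key step is the linearization at the endpoints. At $\urs$ write $A := (\eta''(\urs))^{-1}B_1(\urs)^{-1}DF(\urs)$ where $DF(\urs) = f'(\urs) - \sdy\,\mathrm{Id}$. For a $1$-shock the Lax condition $\lambda_1(\urs) < \sdy < \lambda_1(\uls)$ (and $\sdy < \lambda_2(\urs)$ for weak shocks) makes $DF(\urs)$ have exactly one negative eigenvalue $\lambda_1(\urs)-\sdy$ and $n-1$ positive ones, and conjugating by the positive-definite $\eta''(\urs)$ preserves the inertia; an analogous statement holds at $\uls$ with the roles reversed. The heteroclinic enters $\urs$ along the one-dimensional stable manifold, whose leading eigenvalue has size comparable to $\lambda_1(\urs)-\sdy$. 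Since for a weak $i$-shock the quantity $\lambda_i(\urs)-\sdy$ is $O(\epss)$ — this is the standard weak-shock estimate coming from $\cfi<0$ in \eqref{r curve opposite s curve}, i.e. the eigenvalue varies linearly along the shock curve at rate $\cfi\neq 0$ — the contraction rate of the stable manifold is $\ge C\epss$. Invariant-manifold theory (e.g. a Gronwall estimate on the stable fibration, uniform in $\urs$ by smooth dependence on parameters) then yields $|\stdy(x_1)-\urs|\le C\epss e^{-C\epss|x_1|}$ for $x_1>0$ and symmetrically at $\uls$ for $x_1<0$; the prefactor $\epss$ is the total wave amplitude $|\urs-\uls|\sim\epss$. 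The derivative bound follows by differentiating: $|\partial_{x_1}\stdy| = |(\eta''(\stdy))^{-1}B_1(\stdy)^{-1}F(\stdy)|$, and a Taylor expansion of $F$ about $\urs$ (resp.\ $\uls$) combined with $DF(\urs) = O(\epss)$ and the just-proved bound on $\stdy-\urs$ gives $|F(\stdy)|\le C\epss^2 e^{-C\epss|x_1|}$, hence $|\partial_{x_1}\stdy|\le C\epss^2 e^{-C\epss|x_1|}$.

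For the higher derivatives I would argue inductively. Differentiating the identity $\partial_{x_1}\stdy = G(\stdy)$ with $G(V) := (\eta''(V))^{-1}B_1(V)^{-1}F(V)$ gives $\partial_{x_1}^2\stdy = DG(\stdy)\partial_{x_1}\stdy$, and $DG(\urs) = A$ has norm $O(\epss)$ while $DG(\stdy) - DG(\urs) = O(|\stdy - \urs|) = O(\epss)$ on the orbit; hence $|\partial_{x_1}^2\stdy|\le C\epss|\partial_{x_1}\stdy|$. For $j\ge 3$, differentiating once more produces terms of the form $DG(\stdy)\partial_{x_1}^j\stdy$ plus products $D^kG(\stdy)\cdot(\partial_{x_1}^{\beta_1}\stdy)\cdots$ with $|\beta_1|+\dots<j$ and at least two factors that are first-or-higher derivatives; by the induction hypothesis each such product is bounded by $C_j\epss^2 e^{-C\epss|x_1|}\le C_j\epss|\partial_{x_1}\stdy|$, and the linear term $DG(\stdy)\partial_{x_1}^j\stdy$ is $O(\epss)$ times $\partial_{x_1}^j\stdy$, so an elementary bootstrap (or direct integration of the resulting scalar linear ODE for $|\partial_{x_1}^j\stdy|$) closes the estimate. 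The main obstacle is making the endpoint-linearization rates uniform in the parameter $\urs\in\mathcal S_i(\uls)$ and correctly tracking the $\epss$-powers: one must be careful that the stable-manifold contraction rate is $\Theta(\epss)$ (not merely $>0$), which is exactly where genuine nonlinearity, encoded by $\cfi = \lambda_i'(\ul)\cdot\mathbf r_i\neq 0$ in \eqref{r curve opposite s curve}, enters; this is also why only extremal ($i\in\{1,n\}$) weak shocks are treated, so that the relevant eigenvalue is simple and isolated from the rest of the spectrum. Since the statement is attributed to \cite{Preparation}, I would in the write-up cite that reference for the details and only sketch the argument above.
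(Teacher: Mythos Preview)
The paper does not give its own proof of this lemma; it simply cites \cite{Preparation} (and \cite{MajdaSexist} for existence), so there is nothing to compare against beyond noting that your sketch is indeed the standard center/stable-manifold argument for weak viscous shocks and is the right outline.

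There is, however, one genuine slip in your tracking of the $\epss$-powers. You write that $DF(\urs) = f'(\urs)-\sdy\,\mathrm{Id}$ is $O(\epss)$, and likewise that $DG(\urs)=A$ has norm $O(\epss)$. This is false: only the $i$-th eigenvalue $\lambda_i(\urs)-\sdy$ is $O(\epss)$, while the remaining eigenvalues $\lambda_j(\urs)-\sdy$ for $j\neq i$ are bounded away from zero by the spectral gap, so $\|DF(\urs)\|$ and $\|A\|$ are $O(1)$. Your Taylor-expansion step ``$|F(\stdy)|\le C\epss^2 e^{-C\epss|x_1|}$'' therefore does not follow from the stated reason. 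The fix is to use the additional information---implicit in the one-dimensional stable manifold and made explicit in the paper as \eqref{k_S as main part}---that the profile and all its derivatives stay, up to $O(\epss)$ corrections, in the direction of the stable eigenvector (close to $\mathbf r_i$). On that direction $DF(\urs)$ and $DG(\urs)$ do act with size $O(\epss)$, which restores the bound $|\partial_{x_1}\stdy|\le C\epss|\stdy-\urs|$ and closes the induction for higher derivatives. Alternatively, one can parametrize the connecting orbit by a scalar variable along the shock curve (as is effectively done via $k_{S_i}$ in the paper), reducing to a genuinely scalar ODE whose linearization is $O(\epss)$; this is the cleanest way to make the $\epss$-counting rigorous.
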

We define 
\begin{align*}
    S_i(t,x) = \stdy(x_1 - \sigma_i t) \, ,
\end{align*}
and the projection of $S_i$ onto $\li$
\begin{equation}\label{decomposition of S}
    \begin{aligned}
        k_{S_i}(t,x) = \frac{\bigl(S_i(t,x) - \uls\bigl) \cdot \li}{\epss} \,.
    \end{aligned}
\end{equation}
From now on, we call $S_i$ the planar viscous i-shock wave. 
We know $S_i$ satisfies
\begin{align}\label{eq for shock}
    \partial_t S_i + \partial_{x_1} f(S_i) &= \partial_{x_1}\bigl(B_1(S_i)\partial_{x_1} \eta'(S_i)\bigl) \, .
\end{align}
In \cite{Preparation}, we show
\begin{align}\label{derivative of k}
    \big|\partial_{x_1}k_{S_i} +\frac{\cf}{2B(\ul)\li \cdot \li} \epss k_{S_i} (1-k_{S_i}) \big| \leq C \epss ^ {2} k_{S_i} (1-k_{S_i}) \, ,
\end{align}
and
\begin{equation}\label{k_S as main part}
    \begin{aligned}
        |\partial_{x_1} S_i-\epss \partial_{x_1} k_{S_i} \mathbf{r_i}| \leq C \epss^{2} \partial_{x_1} k_{S_i} \, .
    \end{aligned}
\end{equation}
In particular, $k_{S_i}$ is strictly increasing.

\subsection{Construction of approximate rarefaction wave}
As in \cite{MVWns}, we will consider a smooth approximation of the planar n-rarefaction wave with the help of the smooth solution to the Burgers' equation
\begin{equation}\label{burgers' equation}
    \begin{cases}
        w_t + ww_{x_1}=0 \, , \\
        w(0,x_1)=w_0(x_1)=\frac{w_++w_m}{2}+\frac{w_+-w_m}{2}\text{tanh} x \, .
    \end{cases}
\end{equation}
The smooth approximate planar n-rarefaction wave $\rt$ is defined by
\begin{equation} \label{def of rarefaction}
    \begin{aligned}
        &\lambda_n(\um) = w_m \, , \; \lambda_n(\ur)=w_+ \, , \\
        &\lambda_n\bigl(\rt(t,x)\bigl) = w(1+t,x_1) \, , \\
        &z_n\bigl(\rt(t,x)\bigl)=z_n(\um)=z_n(\ur)\, , 
    \end{aligned}
\end{equation}
where $w(t,x_1)$ is the smooth solution to the Burgers' equation (\ref{burgers' equation}) and $z_n$ is any $n$-Riemann invariant to (\ref{eq for sol inviscid}).

It is easy to check that $\rt$ is the solution to the inviscid system, i.e., 
\begin{equation} \label{eq for rarefaction}
    \partial_t \rt + \partial_{x_1} f(\rt) = 0 \, .
\end{equation}
We define the wave strength of the rarefaction 
\begin{equation*}
    \epr = -(\ur-\um) \cdot \lnn  \, ,
\end{equation*}
and
\begin{align}\label{decomposition of R}
    k_{R_n} = \frac{-(R_n-\um) \cdot \lnn}{\epr} \, .
\end{align}
The following properties of the approximate planar n-rarefaction wave $R_n$ follow from the properties of the smooth solution to the Burgers' equations proved in \cite{MNho}. 
\begin{lemm} \label{property of rarefaction}
    The smooth approximate n-rarefaction wave $\rt$ defined in (\ref{def of rarefaction}) satisfies the following properties. 
    \begin{itemize}
        \item[1)] $\partial_{x_1} \rt \cdot \lnn > 0$ and $|\partial_{x_1} \rt \cdot \li| \leq C \epr \partial_{x_1} \rt \cdot \lnn$ for any $1\leq i\leq n-1$.
        \item[2)] For any $t\geq 0$ and $j\in \mathbb{N}^*$,
            \begin{align*}
                \|\partial_{x_1}^j \rt\|_{L^p(\mathbb{R}\times\mathbb{T}^{d-1})} &\leq C_{p,j} \, \textup{min}\bigl\{\epr, \epr^{1/p}(1+t)^{-1+1/p}\bigl\} \, , \; \forall p \in [1,\infty] \, ,\\
                \|\partial_{x_1}^{2j} \rt\|_{L^p(\mathbb{R}\times\mathbb{T}^{d-1})} &\leq C_{p,j} \, \textup{min}\bigl\{\epr, (1+t)^{-1}\bigl\} \, , \;\forall p \in [1,\infty) \, ,\\
                |\partial_{x_1}^j \rt| &\leq C_{j} |\partial_{x_1} \rt| \,  ,
            \end{align*}
            where $C_j>0$ depends on $j$ and $C_{p,j}>0$ depends on $p, j$.
        \item[3)] For any $t\geq 0$,
            \begin{align*}
                |\rt(t,x)-\um|&\leq C \epr e^{-2|x_1-\lambda_n(\um)t|} \, , \quad \forall x_1 \leq \lambda_n(\um)t \, , \\
                |\partial_{x_1} R_n(t,x)| &\leq C \epr e^{-2|x_1-\lambda_n(\um)t|} \, , \quad \forall x_1 \leq \lambda_n(\um)t  \, , \\
                |\rt(t,x)-\ur|&\leq C \epr e^{-2|x_1-\lambda_n(\ur)t|} \, , \quad \forall x_1 \geq \lambda_n(\ur)t \, , \\
                |\partial_{x_1} \rt(t,x)| &\leq C \epr e^{-2|x_1-\lambda_n(\ur)t|} \, , \quad \forall x_1 \geq \lambda_n(\ur)t  \, .
            \end{align*}
        \item[4)] $\underset{t\rightarrow \infty}{\textup{lim}}\,\underset{x\in\mathbb{R}\times\mathbb{T}^{d-1}}{\textup{sup}} \, |\rt(t,x)-\mathbf{R}_n(\frac{x_1}{t})| = 0 \, .$ 
    \end{itemize}
\end{lemm}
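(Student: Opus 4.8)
The plan is to deduce all four items from the classical estimates for the smooth Burgers profile $w(t,x_1)$ solving (\ref{burgers' equation}) — established by Matsumura--Nishihara in \cite{MNho} — by transporting them through the identity $R_n(t,x)=\Gamma\bigl(w(1+t,x_1)\bigr)$, where $\Gamma$ is a parametrization of the $n$-integral curve through $\um$ by the eigenvalue $\lambda_n$. From \cite{MNho} I would take: $w$ is smooth, $w_m<w<w_+$, and $\partial_{x_1}w>0$ (monotonicity propagated from the increasing datum $w_0$); $|\partial_{x_1}^jw|\le C_j|\partial_{x_1}w|$ and $|\partial_{x_1}w|\le C\epr$; the $L^p$ bounds $\|\partial_{x_1}^jw\|_{L^p}\lesssim\min\{\epr,\epr^{1/p}(1+t)^{-1+1/p}\}$ and $\|\partial_{x_1}^{2j}w\|_{L^p}\lesssim\min\{\epr,(1+t)^{-1}\}$; exponential spatial decay of $|w-w_m|$ and $|\partial_{x_1}w|$ behind the fan and of $|w-w_+|$ and $|\partial_{x_1}w|$ ahead of it; and the uniform limit $w(1+t,x_1)\to\min\{\max\{x_1/t,w_m\},w_+\}$ as $t\to\infty$.

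First I would set up the parametrization. Since $f$ is genuinely nonlinear, $\nabla\lambda_n\cdot r_n\ne0$ near $\ul$; this together with the fact that the differentials of a complete set of $n$-Riemann invariants $z_n^{(1)},\dots,z_n^{(n-1)}$ span a complement of $r_n$ makes $U\mapsto\bigl(\lambda_n(U),z_n^{(1)}(U),\dots,z_n^{(n-1)}(U)\bigr)$ a diffeomorphism on a neighborhood of $\ul$ that is fixed once $\ul$ is. In these coordinates the integral curve $\mathcal{R}_n(\um)$ is the image of a smooth curve $\Gamma:[w_m,w_+]\to\mathcal{V}$ with $\Gamma(w_m)=\um$, $\Gamma(w_+)=\ur$, whose derivatives are bounded uniformly for $\um$ near $\ul$, with $\Gamma'(s)$ parallel to the $n$-eigenvector $r_n(\Gamma(s))$, $\Gamma'(s)\cdot\lnn$ bounded away from $0$, and $|w_+-w_m|\sim\epr$; the identity $R_n(t,x)=\Gamma\bigl(w(1+t,x_1)\bigr)$ then holds by (\ref{def of rarefaction}), since the right-hand side has $n$-th eigenvalue $w(1+t,x_1)$ and the $n$-Riemann invariants of $\um$. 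Items 1) and 4) follow quickly: $\partial_{x_1}R_n=\Gamma'(w)\,\partial_{x_1}w$ gives $\partial_{x_1}R_n\cdot\lnn=(\Gamma'(w)\cdot\lnn)\,\partial_{x_1}w>0$, while for $i\le n-1$, $|\partial_{x_1}R_n\cdot\li|=|\Gamma'(w)\cdot\li|\,\partial_{x_1}w\le C\epr\,\partial_{x_1}R_n\cdot\lnn$, the prefactor being small because $\Gamma'(w)\parallel r_n(\Gamma(w))$, $\eta''(\ul)\mathbf{r_n}\cdot\mathbf{r_i}=0$ by (\ref{trick eta'' f'}), and $R_n$ oscillates only by $O(\epr)$ about $\um$; and, writing $\mathbf{R}_n(x_1/t)=\Gamma\bigl(\min\{\max\{x_1/t,w_m\},w_+\}\bigr)$, one gets $|R_n(t,x)-\mathbf{R}_n(x_1/t)|\le\|\Gamma'\|_{\infty}\,\bigl|w(1+t,x_1)-\min\{\max\{x_1/t,w_m\},w_+\}\bigr|\to0$ uniformly.

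For items 2) and 3) I would differentiate the identity repeatedly by the chain rule, obtaining $\partial_{x_1}^jR_n=\Gamma'(w)\,\partial_{x_1}^jw+P_j$, where $P_j$ is a finite sum of terms $\Gamma^{(\ell)}(w)\,\partial_{x_1}^{k_1}w\cdots\partial_{x_1}^{k_p}w$ with $p\ge2$ and $\sum_i k_i=j$. Because $\Gamma'(w)\cdot\lnn$ is bounded below we have $|\partial_{x_1}R_n|\ge c_0\,|\partial_{x_1}w|$, and combined with $|\partial_{x_1}^kw|\le C_k|\partial_{x_1}w|$ and $|\partial_{x_1}w|\le C\epr$ this shows each factor of $P_j$ beyond the first costs a power of $\epr$; hence $|\partial_{x_1}^jR_n|\le C_j|\partial_{x_1}w|\le C_j|\partial_{x_1}R_n|$, which is the last line of items 2) and 3) and is what converts the Burgers bounds into the $R_n$ bounds. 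The $L^p$ estimates of item 2) then follow term by term: the odd-order ones from $|\partial_{x_1}^jR_n|\le C|\partial_{x_1}w|$; for $\partial_{x_1}^{2j}R_n$ the only genuinely new contribution is a product of two first derivatives, bounded by $|\partial_{x_1}w|^2\le C\epr|\partial_{x_1}w|\le C\min\{\epr,(1+t)^{-1}\}$ using $|\partial_{x_1}w|\le C\min\{\epr,(1+t)^{-1}\}$, which has the stated order; item 3) is the same transfer restricted to the two far regions, with the bounded shift between $w_m(1+t)$ and $\lambda_n(\um)t$ (resp. $w_+(1+t)$ and $\lambda_n(\ur)t$) absorbed into the constant. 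I expect the main point requiring care to be the bookkeeping in this last step together with the uniformity of every constant as $\um\to\ul$; there is no deeper obstacle, since the lemma is essentially a translation of the Burgers estimates of \cite{MNho} along the fixed diffeomorphism constructed above.
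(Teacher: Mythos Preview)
Your proposal is correct and is exactly the approach the paper indicates: the paper gives no detailed proof, merely stating that the properties ``follow from the properties of the smooth solution to the Burgers' equation proved in \cite{MNho},'' and you have supplied the standard transfer argument via the parametrization $R_n=\Gamma\bigl(w(1+t,x_1)\bigr)$ of the $n$-integral curve by $\lambda_n$. The only place to be slightly more careful is the sign in item~1): with the paper's convention $\nabla\lambda_n\cdot\rn=c_f^{(n)}<0$, the proportionality factor in $\Gamma'(s)=\alpha\, r_n(\Gamma(s))$ has a definite sign that you should track to match the stated inequality, but this is cosmetic and does not affect the substance of the argument.
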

In particular, $k_{R_n}$ is strictly increasing,
\begin{equation}\label{k_R as main part}
    \begin{aligned}
        |\partial_{x_1} R_n + \epr \partial_{x_1} k_{R_n} \rn| & \leq C \epr^2 \partial_{x_1} k_{R_n} \, ,
    \end{aligned}
\end{equation}
and
\begin{equation} \label{rarefaction time bound}
    \begin{aligned}
        \int_0^\infty \|\partial_{x_1} R_n\|_{L^4} ^4 \, dt\leq C \epr ^ 3 \, , \; \int_0^\infty \|\partial_{x_1} R_n\|_{L^4} ^2 \, dt\leq C \epr  \, , \\
        \int_0^\infty \|\partial_{x_1}^{2j} R_n\|_{L^2} ^2 \, dt\leq C \epr \, , \;
        \int_0^\infty \|\partial_{x_1}^2 R_n\|_{L^{1}} ^{4/3} \, dt\leq C \epr^{1/3} \, .
    \end{aligned}
\end{equation}
For convenience, we call $R_n$ the planar n-rarefaction wave from now on.

\subsection{Construction of weight functions, shift functions and the superposition wave}
We are ready to introduce the weight functions, the shift functions and the superposition wave.

Let $W_n$ be either the planar viscous n-shock wave $S_n$ or the planar n-rarefaction wave $R_n$. Recall (\ref{decomposition of S}) and (\ref{decomposition of R}).
We define the weight functions $a_{S_1}, a_{W_n}$ by
\begin{equation}\label{def of weight functions}
    \begin{aligned}
        a_{S_1} (t,x) &= 1 - \Lambda_{S_1} \eps {k_{S_1}}(t,x) \, ,\\
        a_{W_n}(t, x) &= 1 + \Lambda_{W_n} \epww  k_{W_n}(t,x)  \, ,
    \end{aligned}
\end{equation}
for large enough constants $\Lambda_{S_1},\Lambda_{W_n}>0$ that depend only on $B_j,f,\eta,\ul$. As $k_{S_1}, k_{W_n}$ are increasing, the weight function $a_{S_1}$ is decreasing and the weight function $a_{W_n}$ is increasing.
Also, we have $\|a_{S_1}\|_{C^1}, \|a_{W_n}\|_{C^1} \leq 2$ by taking $\eps, \epww$ small enough.

For any function $h:\mathbb{R}_{\geq 0}\times \mathbb{R}\times \mathbb{T}^{d-1}\rightarrow \mathbb{R}^n$ and $\mathbf{X}:\mathbb{R}_{\geq 0}\rightarrow \mathbb{R}$, we define 
\begin{align*}
    h^{\x}\bigl(t,(x_1,y)\bigl)=h\bigl(t,(x_1 + \x(t),y)\bigl) \, .
\end{align*}
We define the superposition wave 
\begin{equation} \label{eq for super wave}
    \utl (t,x) = {S}_1^{{\mathbf{X}_1}}(t,x) + W_n^{\x_n}(t,x) - \um \, ,
\end{equation}
and the weight function
\begin{align}\label{weight function}
    a(t,x) = a_{S_1}^{\x_1}(t, x) + \aw(t, x) \, ,
\end{align}
where $a_{S_1}, a_{W_n}$ are defined in (\ref{def of weight functions}), and the shift $\mathbf{X}_i$ is defined as the solution to the ODE
\begin{equation} \label{def of shift}
    \begin{cases}
        \dot{\x}_i(t) = \frac{\Tilde{C}_i}{\epss} \int a \, \eta'' (\Tilde{U})(U-\utl) \partial_{x_1} S_i^{\x_i} \, dx\, , \\
        \x_i(0)=0 \,,
    \end{cases}
\end{equation}
for a large enough constant $\Tilde{C_i}>0$ that depends only on $B_j,f,\eta,\ul$ for $i\in\{1,n\}$ if $W_n = S_n$ and for $i=1$ if $W_n=R_n$.
As the planar n-rarefaction wave $R_n$ is not shifted, we define $\x_n=0$ if $W_n=R_n$ for consistency. 
The existence and uniqueness of shifts $\x_i$ are proved in Proposition \ref{existence prop}.

By (\ref{eq for shock}) and (\ref{eq for rarefaction}), we have
\begin{equation} \label{eq for superposition wave}
    \partial_t \utl + \partial_{x_1} f(\utl) = \partial_{x_1} \bigl(B_1(\utl)\partial_{x_1} \eta'  (\utl)\bigl)  + Z + E_1 + E_2\, ,
\end{equation}
where 
\begin{equation*}
    {E_1} = \partial_{x_1} f(\utl) - \partial_{x_1} f(\so) - \partial_{x_1} f(\wn) \, ,
\end{equation*}
if ${W}_n=S_n$, then
\begin{equation*}
    \begin{aligned}
        {Z} &=\dot{\mathbf{X}}_1\partial_{x_1}\so + \dot{\mathbf{X}}_n\partial_{x_1}\st\, , \\
        {E_2} &= 
        \partial_{x_1}\bigl(B_1(\so)\partial_{x_1} \eta'(\so)\bigl) + \partial_{x_1}\bigl(B_1(\st)\partial_{x_1} \eta'(\st)\bigl) \\&\quad - \partial_{x_1} \bigl(B_1(\utl)\partial_{x_1} \eta'  (\utl)\bigl) \, , 
    \end{aligned}
\end{equation*}
and if ${W}_n={R}_n$, then
\begin{equation*}
    \begin{aligned}
        {Z} &=\dot{\mathbf{X}}_1\partial_{x_1}\so \, , \\
        {E_2} &= 
        \partial_{x_1}\bigl(B_1(\so)\partial_{x_1} \eta'(\so)\bigl)- \partial_{x_1} \bigl(B_1(\utl)\partial_{x_1} \eta'  (\utl)\bigl) \, . 
    \end{aligned}
\end{equation*}
The terms $E_1, E_2$ are error terms caused by the fact that $\utl$ is not an exact solution of the system (\ref{eq for sol}). The term $Z$ comes from the shifts. We see $Z, E_2$ depend on the choice of $W_n$, because while the planar viscous shock wave is shifted and is a solution to the viscous system (\ref{eq for shock}), the planar rarefaction wave is not shifted and is a solution to the inviscid model (\ref{eq for rarefaction}).

As $\utl$ does not depend on the transverse direction $y$, (\ref{eq for sol}) and (\ref{eq for superposition wave}) give
\begin{equation}\label{eq u and u tilde}
    \begin{aligned}
        \partial_t (U-\utl) + \partial_{x_1} \bigl(f(U)-f(\utl)\bigl) + \sum_{j=2}^{d}\partial_{x_j} \bigl(g_j(U)-g_j(\utl)\bigl) \\ = \sum_{j=1}^d \partial_{x_j}\bigl(B_j(U)\partial_{x_j} \eta'(U) -B_j(\utl)\partial_{x_j} \eta' (\utl)\bigl) - {Z}- {E_1} - {E_2} \, .
    \end{aligned}
\end{equation}
\section{Proof of Theorem \ref{thm}}\label{proof of theorem}
First, we introduce local-in-time estimates in Proposition \ref{existence prop} and a priori estimates in Proposition \ref{prop energy estimate}. Then we discuss how the two propositions prove the global existence and the asymptotic behavior results stated in Theorem \ref{thm}.
\subsection{Local-in-time estimates and a priori estimates}\label{subsection of props}
\begin{prop}\label{existence prop}
    For any $0<\epl<\epe$ and any $t_0\geq 0$, there exists $T_0>0$ that depends on $\epl,\epe$ such that the following is true. 

    \noindent
    If $\|U(t_0,\cdot)-\utl(t_0,\cdot)\|_{H^m(\mathbb{R}\times {\mathbb{T}^{d-1}})}\leq \ep_1$, then
    \begin{itemize}
        \item[1)] (\ref{eq for sol}) has a unique  solution $U$ on $[t_0,t_0+T_0]$,
        \item[2)] (\ref{def of shift}) has a unique absolutely continuous solution $\x_i$ on $[t_0,t_0+T_0]$,
        \item[3)] $U-\utl \in C\bigl([t_0, t_0+T_0]; H^m(\mathbb{R}\times {\mathbb{T}^{d-1}})\bigl)$,
        \item[4)] $\|U(t,\cdot)-\utl(t,\cdot)\|_{H^m(\mathbb{R}\times {\mathbb{T}^{d-1}})}\leq \epe$ for any $t_0\leq t\leq t_0+T_0$.
    \end{itemize}
\end{prop}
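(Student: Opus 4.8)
The plan is to pass to the perturbation $V := U - \utl$, which by \eqref{eq u and u tilde} solves a symmetrizable quasilinear parabolic system coupled to the shift ODEs \eqref{def of shift}, to run a contraction-mapping argument on a short interval $[t_0,t_0+T_0]$, and then to read off the quantitative lifespan $T_0 = T_0(\epl,\epe)$ from an $H^m$ energy estimate. I would first record that the background $\utl$ is a smooth function of $x$, bounded together with all its $x$-derivatives by constants depending only on $B_j,f,\eta,\ul$ and $\epw$ — uniformly in $t_0\ge 0$ and along any Lipschitz shift path — using the profile estimates of Lemma \ref{pointwise bound of shock} for the viscous shocks and of Lemma \ref{property of rarefaction} for the approximate rarefaction. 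Since $m$ is the smallest integer strictly larger than $d/2$, $H^m(\mathbb{R}\times\mathbb{T}^{d-1})$ is a Banach algebra and embeds continuously in $L^\infty$; hence, after shrinking $\epe$ if necessary, $\|V\|_{H^m}\le\epe$ keeps $U=\utl+V$ inside the convex phase space $\mathcal V$ and makes the Moser-type product and composition estimates available.

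\emph{Step 1: decoupling the shifts.} For fixed $V\in C([t_0,t_0+T_0];L^2(\mathbb{R}\times\mathbb{T}^{d-1}))$, the right-hand side of \eqref{def of shift} is continuous in $t$ and globally Lipschitz in $\x_i$: the $\x_i$-dependence enters only through the translates $S_i^{\x_i}$ and $a_{S_1}^{\x_1}$, which are $C^1$ in the shift with derivatives bounded uniformly in $L^2\cap L^\infty$ by Lemma \ref{pointwise bound of shock}, while $\|\partial_{x_1}S_i\|_{L^2}<\infty$ yields $|\dot{\x}_i(t)|\le C\|V(t)\|_{L^2}$. A standard ODE existence--uniqueness argument produces a unique $C^1$ solution $\x_i$ on $[t_0,t_0+T_0]$, and a Gr\"onwall estimate shows the map $V\mapsto(\x_1,\x_n)$ is Lipschitz from $C([t_0,t_0+T_0];L^2)$ into $C^1([t_0,t_0+T_0])$, with Lipschitz constant $O(T_0)$ on the relevant ball.

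\emph{Step 2: solving the PDE and closing the argument.} With $\x_i$ now a known Lipschitz function of $t$, \eqref{eq u and u tilde} is uniformly parabolic — each $B_j$ is positive definite and $\eta''>0$ — and its forcing $-Z-E_1-E_2$ is bounded in $H^m$ by $C(1+\|V\|_{L^2})$ on the ball: the term $Z$ is controlled through $|\dot{\x}_i|\le C\|V\|_{L^2}$ and the profile bounds, and $E_1,E_2$ are $H^m$ functions localized near the interaction region by Lemmas \ref{pointwise bound of shock} and \ref{property of rarefaction}. Differentiating by $\partial_x^{\alpha}$ for $|\alpha|\le m$, pairing with $\eta''(U)\partial_x^{\alpha}V$, integrating by parts and using positivity of the $B_j$ gives, on $\{\|V\|_{H^m}\le\epe\}$,
\[
\frac{d}{dt}\|V\|_{H^m}^2 + c\,\|V\|_{\dot H^{m+1}}^2 \le C_0(\epe),
\]
with $c>0$ and $C_0$ depending only on $B_j,f,\eta,\ul,\epw,\epe$. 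The same computation applied to the difference of two iterates yields a contraction of the composed map $V\mapsto(\x_1,\x_n)\mapsto V$ in $C([t_0,t_0+T_0];H^m)\cap L^2(t_0,t_0+T_0;H^{m+1})$ provided $T_0$ is small; its fixed point is the solution asserted in 1)--3). Integrating the displayed inequality gives $\|V(t)\|_{H^m}^2\le\epl^2+C_0(\epe)T_0$ for $t_0\le t\le t_0+T_0$, so taking $T_0:=(\epe^2-\epl^2)/C_0(\epe)$ — reduced further if needed to preserve the contraction — yields 4).

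\emph{Main obstacle.} The genuinely new feature relative to textbook parabolic local well-posedness is the nonlocal, shift-dependent coupling: the ODE \eqref{def of shift} sees all of $U$, the PDE forcing $Z$ sees $\dot{\x}_i$, and both see the translated profiles $S_i^{\x_i}$ and $a_{S_1}^{\x_1}$. The crux is to verify this coupling is benign — finiteness of $\|\partial_{x_1}S_i\|_{L^2}$ makes $|\dot{\x}_i|\lesssim\|V\|_{H^m}$, so $Z$ cannot spoil the $H^m$ energy estimate, and the $C^1$-in-shift dependence of $S_i^{\x_i}$ and $a_{S_1}^{\x_1}$ with uniformly bounded derivatives makes the composed map a contraction for short times. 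Everything else is the by-now-standard local theory for symmetrizable parabolic systems in $H^m$ with $m>d/2$, carried out essentially as in \cite{MVWns}.
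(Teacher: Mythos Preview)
Your sketch is correct and would close the proposition; the paper itself does not write out a proof but simply defers to Serre \cite{Slocalexstc} for the PDE local theory and to \cite[Subsection~3.3]{MVWns} for the shift ODE. One structural difference is worth noting: you run a single coupled fixed point on the perturbation $V=U-\utl$, iterating $V\mapsto(\x_1,\x_n)\mapsto V$, whereas the paper's intended route is fully decoupled. The original system \eqref{eq for sol} for $U$ does not see the shifts at all, so one can first obtain $U$ on $[t_0,t_0+T_0]$ by the standard $H^m$ local theory for symmetrizable parabolic systems (with $U(t_0)$ given and the smallness hypothesis guaranteeing $U$ stays in $\mathcal V$), and only afterwards feed this known $U$ into \eqref{def of shift}, which is then an ODE in $\x_i$ alone with Lipschitz right-hand side. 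This decoupling avoids the composed contraction and the $O(T_0)$ Lipschitz bookkeeping in your Step~1; your coupled scheme is not wrong, just heavier than necessary. Either way the quantitative lifespan $T_0=T_0(\epl,\epe)$ comes out of the same differential inequality you wrote.
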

The local-in-time existence and uniqueness of the solution $U$ can be done similarly as in Serre's paper \cite{Slocalexstc}. Since the viscosity matrices of our system (\ref{eq for sol}) are positive definite, the proof will be simpler. 
The existence and uniqueness of shifts $\x_i$ can be shown in the same way as in subsection 3.3 of \cite{MVWns}.

Before introducing a priori estimates, we first establish the assumption of a priori estimates. Since sections $\ref{section energy estimates}$ and $\ref{H^m contraction}$ give the proof of a priori estimates, the following assumption will be the assumption of all lemmas in both sections.
\begin{assum}\label{assumption}
    Let $U$ be solution to (\ref{eq for sol}) on $[0,T]$ for some $T>0$.
    Let $\utl$ be the superposition wave defined in (\ref{eq for super wave}) with absolutely continuous shifts $\x_1,\x_n$ defined in (\ref{def of shift}) and weight function $a$ defined in (\ref{weight function}).
    Assume $\eps, \epww< \epw$,
    \begin{equation*}
        U - \utl \in C\bigl([0,T]; H^m(\mathbb{R}\times{\mathbb{T}^{d-1}})\bigl) \, ,
    \end{equation*}
    and
    \begin{equation} \label{L infinity bound}
        \| U-\utl\|_{L^\infty (0, T ; \, H^m (\mathbb{R}\times{\mathbb{T}^{d-1}}))} \leq \epe \, .
    \end{equation}
\end{assum}

By Sobolev embedding, (\ref{L infinity bound}) in Assumption \ref{assumption} implies
\begin{equation}\label{L infinity}
    \|U - \utl\|_{L^\infty((0,T)\times (\mathbb{R}\times {\mathbb{T}^{d-1}}))} \leq C \epe \, .
\end{equation}

\begin{prop} \label{prop energy estimate}
    For any $\ul \in \mathbb{R}^n$, there exist $\epw, \epe, C_0, \Lambda_{S_1}, \Lambda_{W_n}, \Tilde{C}_{1}, \Tilde{C}_{n}>0$ such that the following is true.
    
    Assume Assumption \ref{assumption}. Then
    \begin{align}\label{prop energy estimate inequality 1}
        \begin{split}
            \underset{t\in [0,T]}{\textup{sup}}\|U(t,\cdot)-\utl(t,\cdot)\|_{H^m(\mathbb{R}\times{\mathbb{T}^{d-1}})} \\
            + \sqrt{ \int_0^T \sum_{k=0}^m D_k(U) + G_{S_1}(U) + G_{W_n}(U) + Y dt }
            \\ \leq  C_0 \|U_0-\utl_0\|_{H^m(\mathbb{R}\times{\mathbb{T}^{d-1}})} + C_0 E \, ,
        \end{split}
     \end{align}
     and for any $\beta\in \mathbb{N}^d$ such that $1\leq |\beta|\leq m$,
     \begin{equation}\label{estimate for time asymptotic}
        \begin{aligned}
            &\int_0^T \Big|\frac{d}{dt} \int \big|\partial_x^\beta \bigl(U(t,\cdot)-\utl(t,\cdot)\bigl)\big|^2\, dx\Big | \, dt
            \\&\leq C_0 \int_0^T \Bigl(\sum_{k=0}^m D_k(U) + G_{S_1}(U) + G_{W_n}(U) + Y \Bigl) \, dt + C_0 E^2 \, ,
        \end{aligned}
     \end{equation}
    where
    \begin{equation}\label{def of D, D1, G_S, G_R}
        \begin{aligned}
            D_k(U)&= \sum_{\alpha\in \mathbb{N}^d, |\alpha|=k+1}\int \big|\partial_{x}^{\alpha} (U-\utl)\big|^2 \, dx \, , \\
            G_{S_1}(U)&=\int |U-\utl|^2 |\partial_{x_1} \so| \, dx\, , \\
            G_{W_n}(U)&=\int |U-\utl|^2 |\partial_{x_1} \wn| \, dx \, ,\\
            Y&= 
            \begin{cases}
                \eps|\dot{\x}_1|^2 + \epww |\dot{\x}_n|^2 \, , \textup{ if } W_n = S_n \, , \\
                \eps|\dot{\x}_1|^2 \, , \textup{ if } W_n =R_n \, ,
            \end{cases}
            \\
            E&=\begin{cases}
                \epws + \eps \, , \textup{ if } W_n = S_n \, , \\
                \epr ^ {1/6}\, , \textup{ if } W_n =R_n \, .
            \end{cases}
        \end{aligned}
    \end{equation}
    In addition, 
    \begin{equation}
        |\dot{\x}_1(t)| + |\dot{\x}_n(t)| \leq C_0 \| U(t,\cdot)-\utl(t,\cdot)\|_{L^\infty (\mathbb{R}\times {\mathbb{T}^{d-1}})}\, , \quad \forall \, t\in [0,T] \, .
    \end{equation}
\end{prop}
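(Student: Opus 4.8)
The plan is to prove (\ref{prop energy estimate inequality 1}) in two stages: an $L^2$-level contraction estimate obtained by the $a$-contraction method with shifts (the content of Section \ref{section energy estimates}), and an upgrade to the full $H^m$ estimate by induction on the order of differentiation (Section \ref{H^m contraction}). For the $L^2$ stage I would work with the pseudo-distance $\int a\,\eta(U|\utl)\,dx$, which by strict convexity of $\eta$ and $\|a\|_{C^0}$ close to $1$ is comparable to $\|U-\utl\|_{L^2(\mathbb{R}\times\mathbb{T}^{d-1})}^2$, and apply Lemma \ref{lemma rem} to obtain
\begin{equation*}
    \frac{d}{dt}\int a\,\eta(U|\utl)\,dx \leq \mathcal{Z}(U)-\mathcal{D}(U)+\mathcal{H}(U)+\mathcal{E}(U)\, .
\end{equation*}
Invoking the hyperbolic scalarization of Lemma \ref{lemma rem}, the Poincar\'e-type inequality of Lemma \ref{lemma parabolic} (which reduces to Lemma \ref{Poincare inequality lemma}), and the specific choice of weights $a_{S_1},a_{W_n}$ and shifts $\x_1,\x_n$ made in Lemma \ref{lemma L^2 estimate}, the combination $\mathcal{Z}-\mathcal{D}+\mathcal{H}$ is absorbed into $-c\bigl(D_0(U)+G_{S_1}(U)+G_{W_n}(U)+Y\bigr)$ for some $c>0$, leaving only the interaction term, which by Lemma \ref{property of rarefaction} and Lemma \ref{interaction bound} satisfies $\int_0^T|\mathcal{E}(U)|\,dt\lesssim E^2$. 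Integrating in time yields the $k=0$ part of (\ref{prop energy estimate inequality 1}).

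For the induction, fix $1\leq k\leq m$ and assume the bound is already known for $D_0,\dots,D_{k-1}$ together with $\sup_t\|U-\utl\|_{H^{k-1}}$. For each $\alpha\in\mathbb{N}^d$ with $|\alpha|=k$ I would apply $\partial_x^\alpha$ to (\ref{eq u and u tilde}), pair it with $\eta''(\utl)\,\partial_x^\alpha(U-\utl)$, and integrate over $\mathbb{R}\times\mathbb{T}^{d-1}$. The top-order viscous contribution produces the good dissipation $D_k(U)$, where positive definiteness of the $B_j$ is essential; each remaining term is either (i) a commutator or nonlinear product, controlled via the Sobolev embedding $H^m\hookrightarrow L^\infty$ (valid since $m>d/2$), the smallness (\ref{L infinity}), and the lower-order dissipation from the inductive hypothesis, or (ii) a term carrying a factor of a wave derivative $\partial_{x_1}^j\so$, $\partial_{x_1}^j\wn$, or of $\dot{\x}_i$, arising from $Z,E_1,E_2$; these last are handled by the exponential localization and higher-derivative bounds of Lemma \ref{pointwise bound of shock} and Lemma \ref{property of rarefaction}, the time-integrability (\ref{rarefaction time bound}) and Lemma \ref{interaction bound}, and the shift dissipation $Y$. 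Summing over $|\alpha|=k$ and then over $k$, and fixing the constants in the order $C$ (wave-independent), then $\Lambda_{S_1},\Lambda_{W_n}$ large enough to open the spectral gap, then the shift coefficients in (\ref{def of shift}) large enough, then $\epw$ small, then $\epe$ small, closes (\ref{prop energy estimate inequality 1}).

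The estimate (\ref{estimate for time asymptotic}) comes from the same differentiated energy identity: instead of integrating a signed inequality, I would keep $\big|\frac{d}{dt}\int|\partial_x^\beta(U-\utl)|^2\,dx\big|$ and bound the right-hand side of that identity in absolute value, noting that every term is dominated by $\sum_{k=0}^m D_k(U)+G_{S_1}(U)+G_{W_n}(U)+Y$ plus the time-integrable interaction contribution bounded by $E^2$; this is precisely what is needed later to deduce $\|\partial_x^\beta(U-\utl)\|_{L^2}\to 0$. The last inequality of the proposition is immediate from the defining ODE (\ref{def of shift}): since $\partial_{x_1}\si$ is independent of $y$ and $\int_{\mathbb{R}\times\mathbb{T}^{d-1}}|\partial_{x_1}\si|\,dx\lesssim\epss$ by Lemma \ref{pointwise bound of shock},
\begin{equation*}
    |\dot{\x}_i(t)|\leq \frac{C}{\epss}\,\|U-\utl\|_{L^\infty(\mathbb{R}\times\mathbb{T}^{d-1})}\int_{\mathbb{R}\times\mathbb{T}^{d-1}}|\partial_{x_1}\si|\,dx \leq C\,\|U-\utl\|_{L^\infty(\mathbb{R}\times\mathbb{T}^{d-1})}\, .
\end{equation*}

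The main obstacle is the inductive step in multi-D. Differentiating the nonlinear system generates a number of higher-order terms that grows with $d$, and one must organize a bookkeeping scheme — the ``classify and control'' technique referred to in the abstract — that cleanly separates the genuinely top-order terms producing $D_k(U)$, the nonlinear and commutator remainders absorbable by smallness and lower-order dissipation, and the wave-profile and shift terms that must be matched against the time-integrable quantities $G_{S_1},G_{W_n},Y$ and the decay estimates of Lemmas \ref{pointwise bound of shock} and \ref{property of rarefaction}. Making the transverse derivatives $\partial_{x_j}$, $j\geq 2$, cooperate is where the argument is most delicate, since the fluxes $g_j$ and viscosities $B_j$ carry no genuine-nonlinearity structure; this is presumably why periodicity in $y$ and weak wave strengths are required.
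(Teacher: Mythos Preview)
Your proposal is correct and follows the same two-stage architecture as the paper: the $L^2$ contraction via Lemmas \ref{lemma rem}, \ref{lemma parabolic}, \ref{lemma L^2 estimate}, and then the inductive upgrade to $H^m$ (the paper's Lemma \ref{lemma H^m estimate}, whose ``classify and control'' bookkeeping is exactly Lemmas \ref{lemm reorganization} and \ref{lemm estimate in H^m contraction}). Two small corrections are worth making. First, in the induction step the paper pairs the differentiated equation with $\partial_x^{\alpha}\psi$, not $\eta''(\utl)\,\partial_x^{\alpha}\psi$; your choice also works but generates an extra $\partial_t\eta''(\utl)$ term to absorb. Second, your stated order of fixing constants is backwards: in Lemma \ref{lemma L^2 estimate} the paper fixes $\Tilde{C}_i=2\mathbf{C}_4/\mathbf{C}_3$ \emph{first} (this closes the Poincar\'e step and is independent of $\Lambda$), and only then chooses $\Lambda_{S_1}=\Lambda_{W_n}$ large enough to absorb both $\mathbf{C}_2/\de$ from Lemma \ref{lemma rem} and the $\mathbf{C}_5\Tilde{C}_i\mathcal{H}_C$ feedback from Lemma \ref{lemma parabolic}; choosing $\Lambda$ before $\Tilde{C}_i$ leaves a circular dependency. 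Finally, your intuition that the transverse derivatives are the delicate point is off: since $\utl$ is $y$-independent, the hyperbolic contributions $\mathcal{H}_3,\mathcal{H}_4$ vanish identically and $\mathcal{D}_4$ gives pure dissipation; the genuine work in multi-D is the product estimate of Lemma \ref{lemm estimate in H^m contraction}, which handles the growing number of nonlinear terms via Gagliardo--Nirenberg.
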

\subsection{Global existence and estimates}\label{Global existence and estimate}
We define
\begin{equation*}
    \p(t)=\|U(t,\cdot)-\utl(t,\cdot)\|_{H^m(\mathbb{R}\times {\mathbb{T}^{d-1}})} \, .
\end{equation*}
We fix $\epe, C_0$ as in Proposition \ref{prop energy estimate}. We take  
the strength of the initial perturbation $\ept$ and the wave strength $\epw$ in Theorem \ref{thm} and $\ep_1$ in Proposition \ref{existence prop} to be small enough such that 
\begin{equation}\label{relationship ep}
    0<C_0\p(0)+C_0 E <\epl<\frac{\epe}{2} \, .
\end{equation}
We define
\begin{align*}
    T = \textup{sup}\Bigl\{t\geq 0: U-\utl \in C\bigl([0,T]; H^m(\mathbb{R}\times{\mathbb{T}^{d-1}})\bigl) \, , \; \p(t)<\epe\Bigl\} \, .
\end{align*}
Assume $T< \infty$. Then Proposition $\ref{prop energy estimate}$ gives
\begin{align*}
    \p(T)\leq C_0\p(0)+C_0 E <\epl \, .
\end{align*}
By Proposition \ref{existence prop}, there exists $T_0>0$ such that 
\begin{align*}
     U-\utl \in C\bigl([T,T+T_0]; H^m(\mathbb{R}\times{\mathbb{T}^{d-1}})\bigl) \, \textup{ and } \, \p(t) \leq \frac{\epe}{2} \textup{ for any } t\in [T, T+T_0]  \, .
\end{align*}
Contradiction! Hence, we get $T=\infty$ and (\ref{thm continuity}) in Theorem \ref{thm}. Now we can apply Proposition \ref{prop energy estimate} on $[0,\infty)$ and get
\begin{align}\label{global estimate energy prop}
    \begin{split}
        \underset{t\in [0,\infty)}{\textup{sup}}\|U-\utl\|_{H^m(\mathbb{R}\times {\mathbb{T}^{d-1}})} \\
        + \sqrt{ \int_0^\infty \sum_{k=0}^m D_k(U) + G_{S_1}(U) + G_{W_n}(U) + Y dt }
        \\ \leq  C_0 \|U_0-\utl_0\|_{H^m(\mathbb{R}\times {\mathbb{T}^{d-1}})} + C_0 E <\infty \, ,
    \end{split}
\end{align}
and for any $\beta = \mathbb{N}^d$ such that $1\leq |\beta| \leq m$,
\begin{equation}\label{estimate for time asymptotic infnity}
        \begin{aligned}
            &\int_0^\infty \Big|\frac{d}{dt} \int \big|\partial_x^\beta \bigl(U(t,\cdot)-\utl(t,\cdot)\bigl)\big|^2\, dx\Big | \, dt
            \\&\leq C_0 \int_0^\infty \Bigl(\sum_{k=0}^m D_k(U) + G_{S_1}(U) + G_{W_n}(U) + Y \Bigl) \, dt + C_0 E^2 < \infty\, .
        \end{aligned}
\end{equation}
In addition,
\begin{equation}\label{global estimate energy prop shift}
    |\dot{\x}_1(t)| + |\dot{\x}_n(t)| \leq C_0 \| U(t,\cdot)-\utl(t,\cdot)\|_{L^\infty (\mathbb{R}\times {\mathbb{T}^{d-1}})}\, , \quad \forall \, t\geq 0 \, .
\end{equation}
By (\ref{global estimate energy prop}), we get (\ref{thm second deri}) in Theorem \ref{thm}.
\subsection{Time-asymptotic behavior}\label{time asymptotic behavior}
Let $\beta\in \mathbb{N}^d$ be such that $1\leq |\beta|\leq m$. We define 
\begin{equation*}
    g(t) = \big\|\partial_x^{\beta} \bigl(U(t,\cdot)-\Tilde{U}(t,\cdot)\bigl)\big\|_{L^2(\mathbb{R}\times {\mathbb{T}^{d-1}})}^2 \, .
\end{equation*}
We show the classical estimate
\begin{equation} \label{classical estimate}
    \int_0^{\infty} |g(t)| + |{g}'(t)| \, dt < \infty \,.
\end{equation}
By (\ref{global estimate energy prop}), we get
\begin{align*}
    \int_0^{\infty} |g(t)| \, dt \leq \int_0^{\infty} \sum_{k=0}^{m-1}D_{k}(U) \, dt < \infty \, .
\end{align*}
By (\ref{estimate for time asymptotic infnity}), we have
\begin{align*}
     \int_0^\infty |g'(t)|\, dt = \int_0^\infty \Big|\frac{d}{dt} \int \big|\partial_x^\beta \bigl(U(t,\cdot)-\utl(t,\cdot)\bigl)\big|^2\, dx\Big |  \, dt < \infty.
\end{align*}
The classical estimate (\ref{classical estimate}) gives
\begin{equation*}
    \underset{t\rightarrow \infty}{\textup{lim}}\big\|\partial_x^{\beta} \bigl(U(t,\cdot) - \utl(t,\cdot)\bigl)\big\|_{L^2(\mathbb{R}\times {\mathbb{T}^{d-1}})} = 0 \, .
\end{equation*}
The Gagliardo–Nirenberg inequality proved in \cite{Ninequality}, the periodicity in the transverse direction, and (\ref{global estimate energy prop}) give
\begin{equation}\label{time asymptotic L infinity}
    \underset{t\rightarrow \infty}{\textup{lim}}\|U(t,\cdot) - \utl(t,\cdot)\|_{L^\infty(\mathbb{R}\times {\mathbb{T}^{d-1}})} = 0 \, .
\end{equation}
By (\ref{time asymptotic L infinity}) and Lemma \ref{property of rarefaction}, we get (\ref{thm pointwise convergence}) in Theorem \ref{thm}. 
By (\ref{global estimate energy prop shift}) and (\ref{time asymptotic L infinity}), we get (\ref{thm X shift}) in Theorem \ref{thm}.

\section{Energy estimate}\label{section energy estimates}
We show the $L^2$ estimates by the $a$-contraction method with shifts in this section. Later in section \ref{H^m contraction}, induction will help us get the $H^m$ estimates and finish the proof of Proposition \ref{prop energy estimate}.

First, we develop tools to handle interaction terms and higher-order terms in subsections \ref{subsec wave interaction} and \ref{subsec rearrange}. Then we apply the $a$-contraction method with shifts. The hyperbolic ``scalarization" is discussed in subsection \ref{subsec relative entropy}. The positive hyperbolic remainder corresponding to the special direction of the planar viscous shock wave motivates the Poincar\'{e} type inequality introduced in subsection \ref{subsec poincare}. Finally in subsection \ref{subsec L^2 contraction}, we choose the constants $\Lambda_{S_1}, \Lambda_{W_n}, \Tilde{C}_{1}, \Tilde{C}_{n}$ defining weight functions and shift functions in a way that makes both hyperbolic ``scalarization" and Poincar\'{e} type inequality work.
\subsection{Wave interaction estimates}\label{subsec wave interaction}
To control the interaction between waves, the idea is to take the shifts small enough that the main layer regions do not overlap.
\begin{lemm} \label{interaction bound}
    Let $\x_1, \x_n$ be the shifts defined in (\ref{def of shift}). Assume Assumption \ref{assumption}. Then for any $0\leq t\leq T$,
    \begin{align*}
        &\big\||\partial_{x_1} \so| |R_n-\um|\big\|_{L^2} + 
        \big\||\partial_{x_1} R_n| |\so-\um|\big\|_{L^2} +
        \big\||\partial_{x_1} R_n| | \partial_{x_1} \so|\big\|_{L^2} \\
        &\leq C \epr \eps e^{-C\eps t} \, , \quad \textup{ if } W_n = R_n \, ,\\
        &\big\| |\partial_{x_1} \so| |\st-\um|^2 \big\|_{L^1} + \big\||\partial_{x_1} \st||\so -\um|^2\big\|_{L^1}  + \big\||\partial_{x_1} \st| | \partial_{x_1} \so|\big\|_{L^1} \\
        &\leq C (\epws^2 \eps e^{-C\eps t} + \epws \eps^2 e^{-C\epws t}) \, , \quad \textup{ if } W_n = S_n \, .
    \end{align*}
\end{lemm}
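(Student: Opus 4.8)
The plan is to estimate each of the three product norms by splitting the real line according to the location of the main transition layers of the two waves $\so$ and $\wn$. The key structural fact is that the $1$-shock wave travels at speed $\sio$ close to $\lambda_1(\ul)$ while $\wn$ (be it $\st$ or $R_n$) is concentrated near speed $\sit$ close to $\lambda_n(\um)$ or spreads in the rarefaction fan $[\lambda_n(\um)t,\lambda_n(\ur)t]$; since $\lambda_1<\lambda_n$ and the shifts are controlled (the shift ODE \eqref{def of shift} together with \eqref{global estimate energy prop shift} and Assumption \ref{assumption} forces $|\dot{\x}_i|\leq C\epe$, hence $|\x_i(t)|\leq C\epe\, t$ which is a lower-order correction to the separation rate), the two layers separate linearly in $t$. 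Concretely, I would fix a dividing point, e.g. $x_1 = \frac{\lambda_1(\ul)+\lambda_n(\um)}{2}\,t$ (shifted appropriately), so that to its left $\so$ is exponentially close to its far-field value with $\partial_{x_1}\so$ exponentially small in the distance, and to its right $\wn - \um$ and $\partial_{x_1}\wn$ are exponentially small in the distance by the pointwise bounds in Lemma \ref{pointwise bound of shock} and parts \textit{3)} of Lemma \ref{property of rarefaction}.

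For the rarefaction case ($W_n=R_n$): on the region $x_1 \lesssim \lambda_n(\um)t$ I use $|\partial_{x_1}R_n|\leq C\epr e^{-2|x_1-\lambda_n(\um)t|}$ and $|\so-\um|\leq C\eps$ (a crude bound suffices) together with the fact that $|\partial_{x_1}\so|$ is then evaluated at points where $|x_1|\gtrsim |\lambda_1(\ul)|$-separated-from-$\lambda_n(\um)t$, so by Lemma \ref{pointwise bound of shock} it is $\leq C\eps^2 e^{-C\eps|x_1|}$ with the argument $\gtrsim ct$; on the complementary region $x_1 \gtrsim \lambda_n(\um)t$, it is $|\partial_{x_1}\so|$ that decays exponentially at rate $\eps$ in a distance $\gtrsim ct$, while $|R_n-\um|\leq C\epr$ and $|\partial_{x_1}R_n|\leq C\epr$ are bounded. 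Carrying out the Gaussian/exponential integrations one gets for each of the three terms a bound of the form $C\epr\eps\, e^{-C\eps t}$, the slowest decaying factor being governed by the shock's spatial decay rate $\sim\eps$ (the rarefaction contributes the prefactor $\epr$). The $L^2$ norm in $x$ is handled by pulling out the sup in one variable and integrating the exponential in the other; periodicity in $y$ only contributes a bounded factor.

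For the two-shock case ($W_n=S_n$): the computation is symmetric but now both waves have exponential profiles, so the cross terms $\big\||\partial_{x_1}\so||\partial_{x_1}\st|\big\|_{L^1}$ and the two relative-entropy-type terms are estimated by splitting at the midpoint between the two shock locations; on the left half $|\partial_{x_1}\st|\leq C\epws^2 e^{-C\epws|x_1-\sit t|}$ with the argument $\gtrsim ct$, giving the factor $e^{-C\epws t}$, times $|\partial_{x_1}\so|$ or $|\so-\um|^2$ which are $O(\epws^2)$ or $O(\eps^2)$ in $L^1$ after integration; on the right half the roles of $\eps$ and $\epws$ swap. This produces the stated bound $C(\epws^2\eps\, e^{-C\eps t} + \epws\eps^2 e^{-C\epws t})$, the two summands reflecting which of the two (possibly very different) small wave strengths controls the decay rate. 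The main obstacle — and the reason the shifts must be incorporated carefully rather than ignored — is ensuring that the shift displacements $\x_i(t)$ do not destroy the linear-in-$t$ separation of the layers: one needs $|\x_i(t)| = o(t)$ uniformly, which follows because $\epe$ can be taken small enough that the $O(\epe)t$ shift is dominated by the fixed gap $(\lambda_n-\lambda_1)t/2 >0$ coming from strict hyperbolicity; once this is secured, every estimate reduces to elementary one-dimensional integrals of products of exponentials and Gaussians.
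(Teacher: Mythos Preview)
Your proposal is correct and follows essentially the same route as the paper: split the real line at the midpoint $\tfrac{\lambda_1(\ul)+\lambda_n(\ul)}{2}\,t$, use the pointwise exponential bounds of Lemmas~\ref{pointwise bound of shock} and~\ref{property of rarefaction} on each half, and control the shifts via $|\x_i(t)|\le C\epe t < \tfrac{\Delta}{8}t$ (from \eqref{def of shift} and \eqref{L infinity} alone---do not invoke \eqref{global estimate energy prop shift}, which is downstream of this lemma). The only cosmetic slip is the word ``Gaussians'': all the integrals here are of exponentials, not Gaussians.
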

\begin{proof} 
First, we define the spectral gap 
\begin{align*}
    \Delta = \lambda_n(\ul) - \lambda_1(\ul) \, .
\end{align*}
By (\ref{def of shift}) and (\ref{L infinity}), we take $\epe$ small enough such that for any $i\in \{1,n\}$,
\begin{equation*}
    |\x_i(t)| \leq C  \Tilde{C}_{i} \epe t < \frac{\Delta t}{8} \, .
\end{equation*}
For any $x_1 > \frac{(\lambda_1(\ul)+\lambda_n(\ul))t}{2}$, we have
\begin{align*}
    x_1-\sio t+\x_1(t) >  \frac{\Delta t}{4} \, . 
\end{align*}
Lemma \ref{pointwise bound of shock} implies that for any $x_1> \frac{(\lambda_1(\ul)+\lambda_n(\ul))t}{2}$,
\begin{align*}
    |\so - \um| &\leq C \eps e^{-C\eps |x_1-\sio t +\mathbf{X}_1(t)|} \\
    & \leq C \eps \textup{exp}\bigl(\frac{-C\eps|x_1-\sio t +\mathbf{X}_1(t)|}{2}\bigl) \textup{exp}\bigl(\frac{-C\eps\Delta t}{8}\bigl) \, , \\
    |\partial_{x_1} \so| &\leq C \eps^2 e^{-C\eps |x_1-\sio t+\mathbf{X}_1(t)|} \\
    & \leq C \eps^2 \textup{exp}\bigl(\frac{-C\eps|x_1-\sio t+\mathbf{X}_1(t)|}{2}\bigl) \textup{exp}\bigl(\frac{-C\eps\Delta t}{8}\bigl) \, .
\end{align*}
For any $x_1 \leq \frac{(\lambda_1(\ul)+\lambda_n(\ul))t}{2}$, we have
\begin{align*}
    x_1 -\lambda_n(\um) t &\leq  -\frac{\Delta t}{4} \, , \\
    x_1 -\sit t + \x_n(t) &\leq  -\frac{\Delta t}{4} \, .
\end{align*}
Lemma \ref{property of rarefaction} implies that for any $x_1\leq \frac{(\lambda_1(\ul)+\lambda_n(\ul))t}{2}$, 
\begin{align*}
    |R_n-\um| \, ,\; |\partial_{x_1} R_n| &\leq C \epr e^{-2|x_1-\lambda_n(\um)t|} \\
    & \leq C \epr \textup{exp}\bigl(-|x_1-\lambda_n(\um)t|\bigl) \textup{exp}\bigl(-\frac{\Delta t}{4}\bigl) \, ,
\end{align*}
and Lemma \ref{pointwise bound of shock} implies that for any $x_1\leq \frac{(\lambda_1(\ul)+\lambda_n(\ul))t}{2}$,
\begin{align*}
    |\st - \um| &\leq C \epws e^{-C\epws |x_1 -\sit t + \x_n(t)|} \\
    & \leq C \epws \textup{exp}\bigl(\frac{-C\epws|x_1 -\sit t + \x_n(t)|}{2}\bigl) \textup{exp}\bigl(\frac{-C\epws\Delta t}{8}\bigl) \, , \\
    |\partial_{x_1} \st| &\leq C \epws^2 e^{-C\epws |x_1 -\sit t + \x_n(t)|} \\
    & \leq C \epws^2 \textup{exp}\bigl(\frac{-C\epws|x_1 -\sit t + \x_n(t)|}{2}\bigl) \textup{exp}\bigl(\frac{-C\epws\Delta t}{8}\bigl) \, .
\end{align*}
We have
\begin{align*}
    &|\partial_{x_1} \so| \bigl(|R_n-\um|+|\partial_{x_1} R_n|\bigl) \\ 
    &\leq 
    \begin{cases}
        C \epr \eps^2 e^{-C\eps |x_1-\sio t+\mathbf{X}_1(t)|} e^{-C\eps t}\, , \; \textup{ if } x_1 > \frac{(\lambda_1(\ul)+\lambda_n(\ul))t}{2} \, , \\
        C \epr \eps^2 e^{-|x_1-\lambda_n(\um)t|} e^{-Ct}\, , \; \textup{ if } x_1 \leq \frac{(\lambda_1(\ul)+\lambda_n(\ul))t}{2} \, , 
    \end{cases} \\
    &|\partial_{x_1} R_n||\so-\um| \\ &\leq 
    \begin{cases}
        C |\partial_{x_1} R_n| \eps e^{-C\eps |{x_1}-\sio t+\mathbf{X}_1(t)|} e^{-C\eps t}\, , \; \textup{ if } {x_1} > \frac{(\lambda_1(\ul)+\lambda_n(\ul))t}{2} \, , \\
        C \epr \eps e^{-|{x_1}-\lambda_n(\um)t|} e^{-Ct}\, , \; \textup{ if } {x_1} \leq \frac{(\lambda_1(\ul)+\lambda_n(\ul))t}{2} \, . 
    \end{cases} 
\end{align*}
Hence, we get
\begin{align*}
    &\big\| |\partial_{x_1} \so| |R_n-\um|\big\|_{L_{x_1}^2}^2 + \big\||\partial_{x_1} R_n| | \partial_{x_1} \so|\big\|_{L_{x_1}^2}^2 \\ 
    &\leq C \epr^2 \eps^3 e^{-C\eps t} \int \eps (e^{-C\eps |{x_1}|} + e^{-|{x_1}|}) \, d{x_1} \leq C \epr ^2 \eps ^ 3 e^{-C\eps t} \, , \\
    &\big\||\partial_{x_1} R_n||\so -\um|\big\|_{L_{x_1}^2}^2  \\
    &\leq C \epr \eps^2 e^{-C\eps t} \int |\partial_{x_1} R_n| \, d{x_1} + 
    C \epr^2 \eps^2 e^{-Ct} \int e^{-|{x_1}|} \, d{x_1} \\
    &\leq C \epr^2 \eps^2 e^{-C\eps t} \, .
\end{align*}
We have
\begin{align*}
    &|\partial_{x_1} \so| |\st-\um|^2 + |\partial_{x_1} \st||\so-\um|^2 + |\partial_{x_1} \st| | \partial_{x_1} \so|\\ 
    &\leq 
    \begin{cases}
        C \epws^2 \eps^2 e^{-C\eps |x_1-\sio t+\mathbf{X}_1(t)|} e^{-C\eps t}\, , \; \textup{ if } x_1 > \frac{(\lambda_1(\ul)+\lambda_n(\ul))t}{2} \, , \\
        C \epws^2 \eps^2 e^{-C\epws|x_1 -\sit t + \x_n(t)|} e^{-C\epws t}\, , \; \textup{ if } x_1 \leq \frac{(\lambda_1(\ul)+\lambda_n(\ul))t}{2} \, .
    \end{cases} 
\end{align*}
Hence, we get
\begin{align*}
    &\big\| |\partial_{x_1} \so| |\st-\um|^2\big\|_{L_{x_1}^1} + \big\||\partial_{x_1} \st||\so -\um|^2\big\|_{L_{x_1}^1} + \big\||\partial_{x_1} \st| | \partial_{x_1} \so|\big\|_{L_{x_1}^1}\\
    &\leq C \epws ^2 \eps e^{-C\eps t} \int \eps e^{-C\eps |{x_1}|} \, d{x_1} + C \epws  \eps^2 e^{-C\epws t} \int \epws e^{-C\epws |{x_1}|} \, d{x_1}\\
    &\leq C (\epws ^2 \eps e^{-C\eps t} + \epws  \eps^2 e^{-C\epws t}) \, .
\end{align*}
\end{proof}
\subsection{Higher derivatives estimates}\label{subsec rearrange}
Let 
\begin{equation}\label{notataion of perturbation}
    \psi = U-\utl \, .
\end{equation}
For any $k\in \mathbb{N}^*$, we define
\begin{align}\label{def of L^k}
    \mathcal{L}^k = \Bigl\{ \Pi_{j=1}^l |\partial_x^{\beta_j}\psi| : l\in \mathbb{N}^* \,, \;\beta_j\in \mathbb{N}^d\, , \; 1\leq |\beta_j|\leq k \, , \; \sum_{j=1}^l |\beta_j| \leq k+1 \Bigl\} \, .
\end{align}
Lemma \ref{lemm reorganization} plays an important role in section \ref{H^m contraction} when working with higher derivatives. It singles out terms that need to be handled differently and unifies the rest in the same form. The last two inequalities in the lemma will be used in the proof of Lemma \ref{lemma rem} to evaluate the interaction terms induced by $E_1, E_2$.
\begin{lemm}\label{lemm reorganization}
    Assume Assumption \ref{assumption}.
    Let $1\leq k\leq m$ where $m$ is defined in (\ref{value of m}). Let $M:\mathbb{R}^n \rightarrow \mathbb{R}^{n\times n}$ and $F:\mathbb{R}^n \rightarrow \mathbb{R}^{n}$ be smooth functions. 
    Then there exists a constant $C>0$ such that for any $\alpha_{k}, \alpha_1 \in \mathbb{N}^d$ such that $|\alpha_{k}|=k \, , \;  |\alpha_1|=1$,
    \begin{align*}
        &\Big|\partial_x^{\alpha_{k}} \bigl(M(U)\partial_x^{\alpha_1} U -M(\utl)\partial_x^{\alpha_1} \utl\bigl)-\bigl(M(U)\partial_x^{\alpha_{k} +\alpha_1} U -M(\utl)\partial_x^{\alpha_{k} +\alpha_1} \utl\bigl)\Big| \\
        &\leq C \bigl(\sum_{L\in\mathcal{L}^{k}} L + |\psi||\partial_{x_1} \utl|\bigl) \, , \\
        &\Big|\partial_x^{\alpha_{k}} \bigl(F(U) -F(\utl)\bigl)\Big| \leq C \bigl(\sum_{L\in\mathcal{L}^{k}} L + |\psi||\partial_{x_1} \utl|\bigl) \, , \\
        &\Big|\partial_x^{\alpha_{k}} \bigl(M(\utl)\partial_{x_1} \utl -M(\so)\partial_{x_1} \so\bigl)\Big|\\
        &\leq C \bigl(|\partial_{x_1}^{k + 1} \wn| + |\partial_{x_1} \wn|^2 + |\partial_{x_1}\wn||\partial_{x_1}\so|+ |\wn-\um| |\partial_{x_1}\so|\bigl) \, , \\
        &\Big|\partial_x^{\alpha_{k}} \bigl(F(\utl)-F(\so)-F(\wn)\bigl)\Big| \\
        &+ \Big|\partial_x^{\alpha_{k}} \bigl(M(\utl)\partial_{x_1} \utl -M(\so)\partial_{x_1} \so - M(\wn)\partial_{x_1} \wn\bigl)\Big|\\
        &\leq C \bigl(|\partial_{x_1} \wn||\partial_{x_1} \so| + |\wn-\um||\partial_{x_1} \so| + |\so-\um||\partial_{x_1} \wn|\bigl) \, .
    \end{align*}
\end{lemm}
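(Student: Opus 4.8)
The plan is to prove all four estimates by repeated application of the Leibniz rule together with a careful bookkeeping of which derivatives land on which factors, using the chain rule for compositions $M(U), F(U)$ and the structural bounds on $\utl$ coming from Lemma \ref{pointwise bound of shock} and Lemma \ref{property of rarefaction}. The key observation organizing everything is that $\partial_{x_1}^j \stdy$ and $\partial_{x_1}^j R_n$ are each controlled by $|\partial_{x_1} \stdy|$ and $|\partial_{x_1} R_n|$ respectively (the last bullets in those lemmas), so every derivative of the wave beyond the first can be absorbed into a single first derivative; and that the only transverse dependence in the problem comes through $\psi$, since $\utl=\utl(t,x_1)$ has $\partial_{x_j}\utl=0$ for $j\geq 2$.

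For the first inequality, I would expand $\partial_x^{\alpha_k}\bigl(M(U)\partial_x^{\alpha_1}U\bigr)$ by Leibniz: the single term where all $k$ derivatives hit $\partial_x^{\alpha_1}U$ gives exactly $M(U)\partial_x^{\alpha_k+\alpha_1}U$, and the analogous statement holds for $\utl$; these are the terms we subtract off. Every remaining term has at least one derivative falling on $M(U)$ (producing, via Faà di Bruno, products of derivatives $\partial_x^{\gamma_i}U$ with $\sum|\gamma_i|\leq k$, each $|\gamma_i|\geq 1$) times at most a $k$-th order derivative of $U$ on the other factor, with the total derivative count $\leq k+1$. The difference $M(U)(\cdots U\cdots)-M(\utl)(\cdots\utl\cdots)$ is then handled by telescoping: write $U=\utl+\psi$, and in each product replace $U$ by $\utl$ one factor at a time. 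A replacement in a factor $\partial_x^{\gamma_i}U$ with $|\gamma_i|\geq 1$ produces $\partial_x^{\gamma_i}\psi$ (if $|\gamma_i|\geq 1$ and the transverse component is nonzero, or $\partial_{x_1}^{|\gamma_i|}\psi$ in any case), contributing a factor in $\mathcal{L}^k$ after using $|\partial_x^{\gamma_i}\utl|\lesssim|\partial_{x_1}\utl|$ to bound the remaining wave factors; a replacement inside the smooth coefficient $M$ itself costs $|\psi|\cdot\|M'\|_\infty$, and when this $|\psi|$ multiplies a pure wave term we land in the $|\psi||\partial_{x_1}\utl|$ bucket, while when it multiplies at least one $\partial_x^{\gamma}\psi$ we use the $L^\infty$ bound (\ref{L infinity}) to make $|\psi|$ a harmless constant and stay in $\mathcal{L}^k$. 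The constraint $1\leq k\leq m$ with $m>d/2$ guarantees, via Sobolev embedding and (\ref{L infinity bound}), that all the low-order factors appearing can indeed be paired so that at most one high-order $\psi$-factor remains uncontrolled in $L^\infty$, which is what keeps every product in $\mathcal{L}^k$. The second inequality is the same argument with $M(U)\partial_x^{\alpha_1}U$ replaced by $F(U)$ and is strictly simpler.

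For the third and fourth inequalities, there is no $\psi$ at all: everything is a smooth function of the waves. Here I would use $\utl=\so+\wn-\um$ and Taylor-expand the smooth functions $M,F$ around $\so$ (resp.\ around $\um$ for the symmetric splitting in the fourth). In the third inequality, $M(\utl)\partial_{x_1}\utl-M(\so)\partial_{x_1}\so = M(\utl)\partial_{x_1}\wn + \bigl(M(\utl)-M(\so)\bigr)\partial_{x_1}\so$; the first piece, after $\partial_x^{\alpha_k}$, is a sum of $\partial_{x_1}^{j}\wn$ times derivatives of $M(\utl)$, which by the chain rule and $|\partial_{x_1}^i\so|\lesssim|\partial_{x_1}\so|$, $|\partial_{x_1}^i\wn|\lesssim|\partial_{x_1}\wn|$ produces exactly the listed terms $|\partial_{x_1}^{k+1}\wn|$, $|\partial_{x_1}\wn|^2$, $|\partial_{x_1}\wn||\partial_{x_1}\so|$; the second piece contributes $|\wn-\um||\partial_{x_1}\so|$ plus lower-order analogues dominated by the same four terms. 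The fourth inequality uses that $F(\utl)-F(\so)-F(\wn) = F(\utl)-F(\so)-F(\wn)+F(\um)$ and, since $\utl-\um=(\so-\um)+(\wn-\um)$, this is a second-order remainder in the two small quantities $\so-\um$ and $\wn-\um$, hence bounded by $|\so-\um||\wn-\um|$ plus, after differentiation, the cross terms $|\wn-\um||\partial_{x_1}\so|$ and $|\so-\um||\partial_{x_1}\wn|$ and $|\partial_{x_1}\so||\partial_{x_1}\wn|$; the viscous analogue is handled identically.

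The main obstacle, and the step requiring the most care, is the combinatorial bookkeeping in the first inequality: one must verify that \emph{every} Leibniz term other than the two that are subtracted off genuinely lands in $\mathcal{L}^k\cup\{|\psi||\partial_{x_1}\utl|\}$, which hinges on the total-order budget $\sum|\beta_j|\leq k+1$ in the definition (\ref{def of L^k}) matching the $k$ derivatives distributed plus the one derivative $\alpha_1$, and on the fact that $m>d/2$ lets Sobolev embedding turn all but one $\psi$-factor into bounded multipliers. Once the allocation of derivatives is set up as a telescoping sum over "which factor is still $U$ versus already $\utl$," the rest is routine.
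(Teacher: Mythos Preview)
Your approach is correct and essentially the same as the paper's: Leibniz/Fa\`a di Bruno followed by writing $U=\utl+\psi$ and telescoping, with wave derivatives collapsed via Lemma~\ref{pointwise bound of shock} and Lemma~\ref{property of rarefaction}; the paper packages the same bookkeeping through an auxiliary index set $\mathcal{L}^{k_1,k_2}(V,\Tilde V)$, and your add--subtract--$F(\um)$ trick for the fourth inequality is an equivalent reorganization of the paper's direct expansion.

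One correction: your invocation of Sobolev embedding and $m>d/2$ in the first inequality is misplaced. Membership in $\mathcal{L}^k$ is a purely combinatorial condition on the multi-indices (each $|\beta_j|\in[1,k]$, total $\leq k+1$), and products with several $\partial_x^{\beta_j}\psi$ factors are allowed---you do not need to reduce to a single high-order factor. The only analytic input needed here is the pointwise bound (\ref{L infinity}) to absorb the undifferentiated $|\psi|$ coming from $M(U)-M(\utl)$; the Sobolev/Gagliardo--Nirenberg pairing you describe is exactly the content of the \emph{next} lemma (Lemma~\ref{lemm estimate in H^m contraction}), not this one.
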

\begin{proof} 
For any $k_1,k_2\in \mathbb{N}^*$ and any function $V,\Tilde{V}:\mathbb{R}_{\geq 0}\times \mathbb{R}\times\mathbb{T}^{d-1} \rightarrow \mathbb{R}^n$, we define
\begin{align*}
        \mathcal{L}^{k_1,k_2}(V,\Tilde{V}) &= \Bigl\{\Pi_{j=1}^l |\partial_x^{\beta_j} V| \, \Pi_{j=1}^{\Tilde{l}} |\partial_x^{\Tilde{\beta_j}} \Tilde{V}| : l\in \mathbb{N}^*, \, \Tilde{l}\in \mathbb{N},\,  \beta_j,\Tilde{\beta_j} \in \mathbb{N}^d ,\, \\& \quad \quad 1\leq |\beta_j|, |\Tilde{\beta_j}|\leq k_1, \,\sum_{j=1}^l |\beta_j|+\sum_{j=1}^{\Tilde{l}}|\Tilde{\beta_j}| = k_2 \, , \; \sum_{j=1}^l |\beta_j| \geq 1 \Bigl\}\, .
    \end{align*}
We show the first inequality. If we apply the chain rule to
\begin{equation*}
    \partial_{x}^{\alpha_{k}} \bigl(M(\cdot)  \partial_x^{\alpha_1} (\cdot)\bigl)  \, ,
\end{equation*}
then either all $\partial_{x}^{\alpha_{k}}$ fall on $\partial_x^{\alpha_1} (\cdot)$ or some fall on $M(\cdot)$. Therefore, we know that
\begin{align*}
    \Big|\partial_x^{\alpha_{k}} \bigl(M(U)\partial_x^{\alpha_1} U -M(\utl)\partial_x^{\alpha_1} \utl\bigl)-\bigl(M(U)\partial_x^{\alpha_{k}+\alpha_1}\partial_x U -M(\utl)\partial_x^{\alpha_{k}+\alpha_1} \utl\bigl)\Big| 
\end{align*}
is bounded by the sum of the absolute value of the terms in the following form
\begin{align*}
    \mathbf{M}:=\bigl(M^{\beta_j}(U)\otimes_{j=1}^{l-1} \partial_x^{\beta_j} U \bigl)\partial_x^{\beta_l} U- \bigl(M^{\beta_j}(\utl)\otimes_{j=1}^{l-1} \partial_x^{\beta_j} \utl \bigl)\partial_x^{\beta_l} \utl \, ,
\end{align*}
where $l\geq 2$, $\beta_{j}\in\mathbb{N}^d$, $1\leq |\beta_j|\leq k$, $\sum_{j=1}^{l}|\beta_j|=k+1$, and $M^{\beta_{j}}$ is some derivative of $M$. By (\ref{L infinity}), Lemma \ref{pointwise bound of shock}, and Lemma \ref{property of rarefaction}, we have
\begin{align*}
    |\mathbf{M}| &= \Big| \bigl(M^{\beta_j}(U)\otimes_{j=1}^{l-1} \partial_x^{\beta_j} (\psi + \utl) \bigl)\partial_x^{\beta_l} (\psi +\utl) - \bigl(M^{\beta_j}(\utl)\otimes_{j=1}^{l-1} \partial_x^{\beta_j} \utl \bigl)\partial_x^{\beta_l} \utl \Big| \\
    &\leq C\sum_{\mathcal{L}^{k, k+1}(\psi, \utl)}L + \Big|\Bigl( \bigl(M^{\beta_j}(U) - M^{\beta_j}(\utl)\bigl)\otimes_{j=1}^{l-1} \partial_x^{\beta_j} \utl \Bigl)\partial_x^{\beta_l} \utl\Big| \\
    &\leq C \sum_{\mathcal{L}^{k}}L + C |\psi| |\partial_{x_1}\utl|
    \, .
\end{align*}
Note the constants $C$ in this proof depend on $B_j,f,\eta,\ul,k$, but the dependency on $k$ does not matter as $m$ is fixed and finite.

Similarly, we can show
\begin{align*}
    \Big|\partial_x^{\alpha_k} \bigl(F(U) -F(\utl)\bigl)\Big| \leq C \bigl(\sum_{\mathcal{L}^{k, k}(\psi, \utl)}L + |\psi| |\partial_{x_1}\utl|\bigl) 
    \leq C\bigl(\sum_{\mathcal{L}^{k}}L + |\psi| |\partial_{x_1}\utl|\bigl) \, ,
\end{align*}
and
\begin{align*}
    &\Big|\partial_x^{\alpha_k} \bigl(M(\utl)\partial_x \utl -M(\so)\partial_x \so\bigl)\Big|\\
    &\leq C \bigl(\sum_{\mathcal{L}^{k+1, k+1}(\wn,\,  \so)}L + |\wn-\um| |\partial_{x_1}\so|\bigl) \\
    &\leq C \bigl(|\partial_{x_1}^{k + 1} \wn| + |\partial_{x_1} \wn|^2 + |\partial_{x_1}\wn||\partial_{x_1}\so|+ |\wn-\um| |\partial_{x_1}\so|\bigl)
    \, .
\end{align*}
Finally, we show the last inequality. We know
\begin{align*}
    \Big|\partial_x^{\alpha_k} \bigl(F(\utl)-F(\so)-F(\wn)\bigl)\Big|
\end{align*}
is bounded by the sum of the absolute value of the terms in the following form
\begin{align*}
    \mathbf{F}:=&F^{\beta_j}(\utl)\otimes_{j=1}^{l} \partial_x^{\beta_j} \utl  - F^{\beta_j}(\so)\otimes_{j=1}^{l} \partial_x^{\beta_j} \so - F^{\beta_j}(\wn)\otimes_{j=1}^{l} \partial_x^{\beta_j} \wn \, ,
\end{align*}
where $l\geq 1$, $\beta_{j}\in\mathbb{N}^d$, $1\leq |\beta_j|\leq k$, $\sum_{j=1}^{l}|\beta_j|=k$, and $F^{\beta_{j}}$ is some derivative of $F$. By Lemma \ref{pointwise bound of shock} and Lemma \ref{property of rarefaction}, we have
\begin{align*}
    |\mathbf{F}| &= \Big|F^{\beta_j}(\utl)\otimes_{j=1}^{l} \partial_x^{\beta_j} (\so+\wn) - F^{\beta_j}(\so)\otimes_{j=1}^{l} \partial_x^{\beta_j} \so 
    \\&\quad - F^{\beta_j}(\wn)\otimes_{j=1}^{l} \partial_x^{\beta_j} \wn \Big| \\
    &\leq C \bigl(|\partial_{x_1} \wn||\partial_{x_1} \so| + |\wn-\um||\partial_{x_1}\so| + |\so-\um||\partial_{x_1} \wn|\bigl)\, .
\end{align*}
Similarly, we can bound
\begin{align*}
    \Big|\partial_x^{\alpha_{k}} \bigl(M(\utl)\partial_{x_1} \utl -M(\so)\partial_{x_1} \so - M(\wn)\partial_{x_1} \wn\bigl)\Big| \, .
\end{align*}
\end{proof}

\subsection{Relative entropy method}\label{subsec relative entropy}
For each layer corresponding to $\so$ and $\wn$, we will construct a basis of the phase space that is well-adapted to both the special direction of the wave $\ro$ (respectively $\rn$) and the dissipation matrix $B_1(\ul)$. As 
the dissipation takes place in the so-called entropic variables $\eta'(U)-\eta'(\utl)$, we project such quantity onto the bases. Since $U-\utl$ is small, we know
\begin{align*}
    \eta'(U)-\eta'(\utl)\approx \eta''(\ul)(U-\utl) \, .
\end{align*}
Hence, the corresponding natural special direction of the wave for the entropic variables is $\eta''(\ul)\ro=\lo$ (respectively $\eta''(\ul)\rn=\lnn$). The point is to work with a basis that is orthogonal with respect to the dissipation matrix and contains $\lo$ (respectively $\lnn$).

When working with the layer of $\so$, we complete $\lo$ into an orthogonal basis of $\mathbb{R}^n$ with respect to the dissipation. In particular, we choose a basis $(\mathbf{v_1^{(1)}} = \lo,\mathbf{v_2^{(1)}},...,\mathbf{v_n^{(1)}})$ such that for any $2\leq i\leq n$ and any $1\leq j\leq n$,
\begin{align*}
    \Tilde{B}_1(\ul) \voi \cdot \mathbf{v_j^{(1)}} = 0 \, , \; B_1(\ul)\voi \cdot  {\voi} =1 \, , 
\end{align*}
where $\Tilde{B}_1 = B_1+B_1^T$. This is always possible thanks to the Gram-Schmidt process. We project the perturbation in entropic variables onto this basis:
\begin{equation} \label{decomposition of S_1}
    \eta'(U)(t,x) - \eta'(\utl)(t,x) = \mu_1(t,x) \lo + \sum_{i=2}^n \mu_i(t,x) \voi \,.
\end{equation}
When working with the hyperbolic terms, we will work with the conserved quantity $U-\utl$. By Taylor expansion and (\ref{L infinity}), we have
\begin{align}\label{decomposition of S_1 taylor}
    \big|(U-\utl)-\bigl(\mu_1 \ro + \sum_{i=2}^n \mu_i \, \eta''(\ul)^{-1}\voi\bigl)\big| \leq C (\epe + \epw)|U-\utl| \, .
\end{align}
As $(\ro, \eta''(\ul)^{-1}\mathbf{v_2^{(1)}}, ..., \eta''(\ul)^{-1}\mathbf{v_n^{(1)}})$ is 
a basis, we have
\begin{align}\label{property of perturbation 1}
    c \sum_{i=2}^n |\mu_i|^2 \leq \bigl|\sum_{i=2}^n \mu_i \, \eta''(\ul)^{-1}\voi\bigl|^2 \leq C \sum_{i=2}^n |\mu_i|^2 \, ,
\end{align}
for some constants $c,C>0$ that depend only on $B_j,f,\eta,\ul$.

Let $P$ be the projection onto $\textup{span}\{\mathbf{r_2},...,\rn\}$, i.e., 
\begin{align}\label{def of projection}
    P(v) = v - (v\cdot \ro) \ro \textup{ for any } v\in \mathbb{R}^n \, .
\end{align}
For any $2\leq i\leq n$,
\begin{align*}
    \eta''(\ul)^{-1} \mathbf{v_{i}^{(1)}} \in \bigl\{v:  \Tilde{B}_1(\ul)\eta''(\ul) \ro \cdot \eta''(\ul) v = 0\bigl\} =: V \, .
\end{align*}
As $\ro\notin V$, there exists $C>0$ such that 
\begin{align}\label{property of perturbation 2}
    |P(v)|^2 \geq C |v|^2 \, \textup{ for any } v\in V\, .
\end{align}
When working with the layer of $\wn$, we complete $\lnn$ into an orthogonal basis of $\mathbb{R}^n$ with respect to the dissipation. In particular, we choose a basis $(\mathbf{v_1^{(n)}},...,\mathbf{v_{n-1}^{(n)}},\mathbf{v_n^{(n)}}=\lnn)$ such that for any $1\leq i\leq n-1$ and any $1\leq j\leq n $,
\begin{align*}
    \Tilde{B}_1(\ul) \vti \cdot \mathbf{v_j^{(n)}} = 0 \, , \; B_1(\ul)\vti \cdot  {\vti} =1 \, .
\end{align*}
We project the perturbation in entropic variables onto this basis:
\begin{equation} \label{decomposition of W}
    \eta'(U)(t,x) - \eta'(\utl)(t,x) = \nu_n(t,x) {\lnn} + \sum_{i=1}^{n-1} \nu_i(t,x) \vti \,.
\end{equation}
Such projection has similar properties as (\ref{decomposition of S_1}).

We choose such bases to make the best use of dissipation in the special directions of the planar shock waves. In the case that $W_n$ is a planar rarefaction wave, we could work with the natural hyperbolic basis $(\ro,...,\rn)$ since we do not need the Poincar\'{e} inequality. However, for the sake of consistency, we will use the basis $(\mathbf{v_1^{(n)}},...,\mathbf{v_{n-1}^{(n)}},\lnn)$ in the case $W_n=R_n$ too. 

\noindent
\textbf{Relative functions}.
The relative flux $(f,g_2,...,g_n)$ is defined by
\begin{equation}\label{eq for relative flux}
    \begin{aligned}
        &f(U|V) = f(U) - f(V) - f'(V)(U-V) \, , \\
        &g_j(U|V) = g_j(U) - g_j(V) - g_j'(V)(U-V) \, ,\textup{ for } 2\leq j\leq d\, .
    \end{aligned} 
\end{equation}
The flux of the relative entropy $q=(q_1,...,q_d)$ is defined by 
\begin{equation} \label{eq for flux of RE}
    q_j(U;V) = 
    \begin{cases}
        q_1(U) - q_1(V) - \eta'(V)\bigl(f(U) - f(V)\bigl) \, , \textup{ if } j=1 \, , \\
        q_j(U) - q_j(V) - \eta'(V)\bigl(g_j(U) - g_j(V)\bigl) \, , \textup{ if } 2\leq j\leq d \, ,
    \end{cases}
\end{equation}
where $q_j(\cdot)$, the entropy flux of $\eta$, is defined in (\ref{def of entropy flux}).

We want to study the evolution of the weighted relative entropy
\begin{equation*}
    \int a(t,x) \, \eta\bigl(U(t,x)|\Tilde{U}(t, x)\bigl) \,dx \,.
\end{equation*}
It involves controlling layer quantities of the form:
\begin{align*}
    \int |\partial_{x_1}\so| F_1 \, dx \, , \; \int |\partial_{x_1} \wn| F_n \, dx \, .
\end{align*}
In each of these layers, a ``scalarization" effect takes place. Such effect damps the perturbation $U-\utl$ in the transverse direction well-adapted to both the special direction of the wave $\ro$ (respectively $\rn$) and the diffusion eigen-direction. 

The main lemma of this section is the following. We postpone our choice of $\Lambda_{S_1}, \Lambda_{W_n},\Tilde{C}_1,\Tilde{C}_n$ to subsection \ref{subsec L^2 contraction}, where we unify the choice of constants for both hyperbolic ``scalarization" and Poincar\'{e} inequality.
\begin{lemm} \label{lemma rem} 
    For any $\ul \in \mathbb{R}^n$, any $0<\de<1$, and any $\Lambda_{S_1}, \Lambda_{W_n}, \Tilde{C}_1, \Tilde{C}_n>0$, there exist $\epw, \epe, C>0$ such that the following is true. 
    
    \noindent
    Assume Assumption \ref{assumption}. Then for any $t\in[0,T]$,
    \begin{equation*}
        \frac{d}{dt} \int a(t,x) \, \eta\bigl(U(t,x)|\utl(t, x)\bigl) \,dx \leq \mathcal{Z}(U) - \mathcal{D}(U) + \mathcal{H}(U) + \mathcal{E}(U)   \, ,
    \end{equation*}
    where  
    \begin{equation*}
        \begin{aligned}
            \mathcal{Z}(U)&=\begin{cases}
                \dot{\x}_1\mathcal{Y}_{1}(U) + \dot{\x}_n\mathcal{Y}_{n}(U)\, , \textup{ if } W_n = S_n \, , \\
                \dot{\x}_1\mathcal{Y}_{1}(U) \, , \textup{ if } W_n = R_n \, ,
            \end{cases} \\
            \mathcal{D}(U) &= (1+2\gamma)\mathcal{D}_{x_1}^{p}(U) + C \mathcal{D}_{x_1}^{r}(U) + C \mathcal{D}_y(U)\, , \\
            \mathcal{H}(U) &= \bigl(-\mathbf{C}_1\textup{min}\{\Lambda_{S_1},\Lambda_{W_n}\} + \frac{\mathbf{C}_2}{\de}\bigl) \mathcal{H}_C \\
            &\quad + (1+\de) \mathcal{H}_{S_1} 
            \begin{cases}
                + (1+\de) \mathcal{H}_{S_n} \, , \textup{ if } W_n = S_n \, , \\
                - (1-\de) \mathcal{H}_{R_n} \, , \textup{ if } W_n = R_n \, ,
            \end{cases}\\
            \mathcal{E}(U) &= \begin{cases}
                 \frac{C}{\de} (\epws^2 \eps e^{-C\eps t} + \epws \eps^2 e^{-C\epws t}) \, , \textup{ if } W_n = S_n \, , \\
                 C \epe \epr\eps e^{-C\eps t} + C \epe \|\partial_{x_1} R_n\|_{L^4}^2 + \frac{C}{\gamma^{1/3}} \epe^{2/3}\|\partial_{x_1}^2 R_n\|_{L^{1}}^{4/3}  \, , \textup{ if } W_n = R_n \, ,
            \end{cases} 
        \end{aligned}
    \end{equation*}
    where $\gamma, \mathbf{C}_1, \mathbf{C}_2 >0$ are constants that depend only on $B_j,f,\eta,\ul$, and
    \begin{align*}
        \mathcal{Y}_{i}(U) &= \int \partial_{x_1} a_{S_i} ^ {\mathbf{X}_i} \, \eta({U|\Tilde{U}})\, dx - \int a \eta''(\Tilde{U})(U-\Tilde{U}) \partial_{x_1} S_i^{\mathbf{X}_i}\, dx \, ,\\
        \mathcal{D}_{x_1}^p(U) &= {B_1(\ul)\lo \cdot \lo}\int |\partial_{x_1}{\mu_1}| ^2 \, dx + {B_1(\ul){\lnn}\cdot{\lnn}}\int |\partial_{x_1} \nu_n | ^2 \, dx \, ,\\ 
        \mathcal{D}_{x_1}^{r}(U) &= \sum_{i=2}^n \int |\partial_{x_1} \mu_i | ^2 \, dx + \sum_{i=1}^{n-1} \int |\partial_{x_1} \nu_i | ^2 \, dx \, ,\\ 
        \mathcal{D}_y(U) &= \sum_{j=2}^d \int \big|\partial_{x_j}(U-\utl)\big|^2 \, dx \, , \\
        \mathcal{H}_C(U) &= \sum_{i=2}^{n}\eps\int \mu_i ^ 2 \dkoo dx + \sum_{i=1}^{n-1}\epww \int \nu_i ^2 \dkww \, dx \, ,  \\
        \mathcal{H}_{S_1}(U) &= -\cf \eps \int \mu_1 ^ 2 \dkoo \, dx \, , \\
        \mathcal{H}_{W_n}(U) &= -\cfs \epww \int \nu_n ^ 2 \dkww \, dx \, .
    \end{align*}
\end{lemm}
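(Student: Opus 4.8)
The plan is to run the weighted relative-entropy ($a$-contraction) computation and sort the output into the four groups of the statement. Differentiating in time, $\frac{d}{dt}\int a\,\eta(U|\utl)\,dx = \int\partial_t a\,\eta(U|\utl)\,dx + \int a\,\partial_t\eta(U|\utl)\,dx$, and substituting the equations \eqref{eq for sol} and \eqref{eq for superposition wave} together with the entropy--entropy-flux relation \eqref{def of entropy flux}, the classical relative-entropy algebra of Dafermos and DiPerna \cite{Darem,Direm} (using the relative functions \eqref{eq for REF}, \eqref{eq for relative flux}, \eqref{eq for flux of RE}) rewrites the hyperbolic part of $\partial_t\eta(U|\utl)$ as $-\partial_{x_1}q_1(U;\utl) - \partial_{x_1}\eta'(\utl)\cdot f(U|\utl)$ plus the transverse analogues $-\partial_{x_j}q_j(U;\utl)$. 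The key structural fact is that both the weight $a$ and the superposition wave $\utl$ are independent of $y$: integrating by parts in $x_j$ ($j\ge 2$), the transverse hyperbolic fluxes are exact $x_j$-derivatives and vanish against $a$, while the transverse viscosity reduces to $-\int a\,B_j(U)\,|\partial_{x_j}(\eta'(U)-\eta'(\utl))|^2\,dx \le -C\mathcal{D}_y$ by positive definiteness of $B_j$ and $a$ being bounded above and below by positive constants. The remaining terms split into: the $\dot{\mathbf{X}}_i$-contributions, arising from $\partial_t a$ acting on $a_{S_i}^{\mathbf{X}_i}$ and from the source $Z$ of \eqref{eq for superposition wave}, which recombine exactly into $\mathcal{Z}(U)$ (with the $\dot{\mathbf{X}}_n\mathcal{Y}_n$ term present only when $W_n=S_n$), matching \eqref{def of shift}; the $x_1$-viscous terms; the $x_1$-hyperbolic terms; and the $E_1,E_2$ terms.

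\textbf{Hyperbolic scalarization.} For the $x_1$-hyperbolic terms one Taylor-expands $\eta(U|\utl)$, $q_1(U;\utl)$ and $f(U|\utl)$ to second order at $\ul$ (the remainders being $O((\epw+\epe)|U-\utl|^2)$), replaces $\partial_{x_1}\eta'(\utl)$ by its principal part via \eqref{k_S as main part}, \eqref{k_R as main part}, and projects $U-\utl$ on the basis \eqref{decomposition of S_1} near the $\so$ layer and on \eqref{decomposition of W} near the $\wn$ layer. By \eqref{trick eta'' f'} the quadratic forms are essentially diagonal, and $(f''(\ul):\ro\otimes\ro)\cdot\lo=\cf<0$, $(f''(\ul):\rn\otimes\rn)\cdot\lnn=\cfs<0$ by \eqref{r curve opposite s curve}. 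The weight $a_{S_1}$ carries $\Lambda_{S_1}$ and activates the spectral gap---$\sigma_1$ being separated from $\lambda_2,\dots,\lambda_n$---so in the directions $\mu_2,\dots,\mu_n$ transverse to $\ro$ it produces a negative block of size $\Lambda_{S_1}$, and analogously $a_{W_n}$ acts on $\nu_1,\dots,\nu_{n-1}$; after splitting the cross terms $\mu_1\mu_i$, $\nu_n\nu_i$ by Young's inequality this yields $\bigl(-\mathbf{C}_1\min\{\Lambda_{S_1},\Lambda_{W_n}\}+\mathbf{C}_2/\de\bigr)\mathcal{H}_C$. In the one remaining direction $\mu_1$ (resp.\ $\nu_n$) there is no spectral gap to exploit, and the best bound is $(1+\de)\mathcal{H}_{S_1}$ (resp.\ $(1+\de)\mathcal{H}_{S_n}$ when $W_n=S_n$), to be depleted later by the Poincar\'{e} inequality; if instead $W_n=R_n$ the corresponding term is $-(1-\de)\mathcal{H}_{R_n}$, the favorable sign being the hyperbolic contractivity of the rarefaction (see the discussion in the introduction), a consequence of genuine nonlinearity \eqref{r curve opposite s curve} and of $\partial_{x_1}\rt\cdot\lnn>0$ (Lemma \ref{property of rarefaction}). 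The Taylor remainders and the variable-viscosity cross terms, being layer-concentrated and carrying a factor $\eps$, $\epww$ or $\epe$ (controlled through \eqref{derivative of k}), are absorbed into $\mathcal{H}_C$ and $\mathcal{D}$ once $\epw,\epe$ are small.

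\textbf{Viscous term.} For the $x_1$-dissipation write $\eta'(U)-\eta'(\utl)=\eta''(\ul)(U-\utl)+O((\epw+\epe)|U-\utl|)$, so that $-\int a\,B_1(U)\,|\partial_{x_1}(\eta'(U)-\eta'(\utl))|^2\,dx$ has principal part $-\int a\,B_1(\ul)\,|\partial_{x_1}(\eta'(U)-\eta'(\utl))|^2\,dx$; decomposing $\eta'(U)-\eta'(\utl)$ in the $\Tilde{B}_1(\ul)$-orthogonal basis $(\lo=\mathbf{v_1^{(1)}},\mathbf{v_2^{(1)}},\dots)$ of \eqref{decomposition of S_1} decouples the $\mu_1$-mode with coefficient $B_1(\ul)\lo\cdot\lo$, and likewise \eqref{decomposition of W} decouples the $\nu_n$-mode with coefficient $B_1(\ul)\lnn\cdot\lnn$; keeping a structural margin $\gamma>0$ (needed below for the viscous defect and later for the Poincar\'{e} argument) this gives $\le -(1+2\gamma)\mathcal{D}_{x_1}^p-C\mathcal{D}_{x_1}^r$. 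The dissipation cross terms involving $\partial_{x_1}\eta'(\utl)$ are layer-concentrated and carry a factor $\eps$ or $\epww$, hence are absorbed into $\mathcal{H}_C$ and $\mathcal{D}$.

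\textbf{Interaction terms and the main obstacle.} The $E_1,E_2$ terms are first reduced, via the last two inequalities of Lemma \ref{lemm reorganization} (applied at order $k=1$ with $F=f$ and $M=B_1\eta''$), to integrals of $|U-\utl|$ against products of the profiles $\so,\wn$ and their first (and, for the rarefaction defect, second) $x_1$-derivatives. When $W_n=S_n$ all these pieces are overlap terms: by Young's inequality with parameter $\de$---the leftover $|U-\utl|^2$-weighted pieces being absorbed into the $\de\mathcal{H}_{S_1}$ and $\mathbf{C}_2\de^{-1}\mathcal{H}_C$ budgets---and Lemma \ref{interaction bound}, one obtains $\frac{C}{\de}(\epws^2\eps e^{-C\eps t}+\epws\eps^2 e^{-C\epws t})$. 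When $W_n=R_n$ there is, in addition, the ``viscous defect'' $\partial_{x_1}\bigl(B_1(\rt)\partial_{x_1}\eta'(\rt)\bigr)$ in $E_2$---since $\rt$ solves only the inviscid equation \eqref{eq for rarefaction}---which produces after integration by parts the overlap part $C\epe\epr\eps e^{-C\eps t}$ (Lemma \ref{interaction bound}), the term $C\epe\|\partial_{x_1}\rt\|_{L^4}^2$ (Cauchy--Schwarz, using $\|U-\utl\|_{L^2}\le\epe$ against $\|\partial_{x_1}\rt\|_{L^4}^2$), and the term $\frac{C}{\gamma^{1/3}}\epe^{2/3}\|\partial_{x_1}^2\rt\|_{L^1}^{4/3}$ (exploiting that $\rt$ depends only on $x_1$, via a one-dimensional Agmon interpolation and Young's inequality with exponents $4$ and $4/3$ against a $\gamma$-fraction of $\mathcal{D}_{x_1}^p$); these three constitute $\mathcal{E}(U)$ and are time-integrable by \eqref{rarefaction time bound}. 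One then fixes $\de$ and takes $\epw,\epe$ small. I expect the hyperbolic scalarization to be the main obstacle: in the $n\times n$ multi-D setting one must organize the second-order quadratic forms in $U-\utl$ precisely, verify that the spectral gap delivers a negative transverse block with a structural constant $\mathbf{C}_1$ that does not degenerate as $\Lambda_{S_1},\Lambda_{W_n}$ grow, keep the two wave layers decoupled, show that the transverse fluxes $g_j$ contribute nothing at the hyperbolic level, and separate genuinely ``principal'' from ``higher-order'' contributions so that the latter are truly absorbable.
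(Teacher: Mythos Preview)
Your plan follows the paper's approach closely: the same relative-entropy decomposition, the same use of planarity to kill the transverse hyperbolic fluxes, the same scalarization via the weight, the same use of Lemma~\ref{lemm reorganization} and Lemma~\ref{interaction bound} for the interaction, and the same one-dimensional interpolation for the rarefaction viscous defect. Two points, however, are glossed over and one of them is a genuine gap.

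\textbf{The margin $\gamma$ on $\mathcal{D}_{x_1}^p$.} You write ``keeping a structural margin $\gamma>0$ \dots\ this gives $\le -(1+2\gamma)\mathcal{D}_{x_1}^p-C\mathcal{D}_{x_1}^r$'' without saying where the extra $2\gamma$ comes from. A single $B_1$-orthogonal decomposition gives exactly coefficient $1$ on the special mode, not $1+2\gamma$; and the later Poincar\'e argument genuinely needs strictly more than $1$. The paper's mechanism is specific: since the same quantity $\eta'(U)-\eta'(\utl)$ is expanded in \emph{both} bases \eqref{decomposition of S_1} and \eqref{decomposition of W}, dotting with $\ro$ and using $\lnn\cdot\ro=0$ (from \eqref{trick eta'' f'}) expresses $\mu_1$ as a linear combination of $\mu_2,\dots,\mu_n,\nu_1,\dots,\nu_{n-1}$, and symmetrically for $\nu_n$. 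Hence $\mathcal{D}_{x_1}^r\ge 8\gamma\,\mathcal{D}_{x_1}^p$ for some structural $\gamma>0$, and one borrows half of $\mathcal{D}_{x_1}^r$ to upgrade the coefficient of $\mathcal{D}_{x_1}^p$ to $1+3\gamma$ (before losing a little to the $\partial_{x_1}^2 R_n$ term). This cross-basis step is essential and missing from your outline.

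\textbf{The non-shift part of $\partial_t a$.} You say the $\dot{\mathbf X}_i$-contributions ``recombine exactly into $\mathcal{Z}(U)$'', but $\partial_t a$ also carries the transport pieces $-\sio\,\partial_{x_1}a_{S_1}^{\x_1}$ (and $-\sit\,\partial_{x_1}a_{S_n}^{\x_n}$ or $\Lambda_{R_n}\partial_{x_1}f(R_n)\cdot\lnn$). In the paper these are not small: they must be combined with the $q_1$-flux term to produce the effective quadratic form $\eta''(\ul)\bigl(f'(\ul)-\lambda_1 I\bigr)$ (resp.\ $-\lambda_n I$), which is what actually has a spectral gap on the transverse directions. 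Your reference to ``$\sigma_1$ being separated from $\lambda_2,\dots,\lambda_n$'' hints at this, but the combination should be stated; in the rarefaction case there is also a residual term (the $\mathcal{Z}_4$ in the paper) that only vanishes to leading order because $\lnn$ is a left eigenvector.
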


\noindent
\textit{Remark.} Recall (\ref{r curve opposite s curve}). We have
\begin{align*}
    \mathcal{D}_{x_1}^p \, , \; \mathcal{D}_{x_1}^r \, , \; \mathcal{D}_y \, , \; \mathcal{H}_C \, , \; \mathcal{H}_{S_1} \, , \; \mathcal{H}_{W_n} \geq 0 \, .
\end{align*}
We have four families of terms: the shift family $\mathcal{Z}$, the viscous family $\mathcal{D}$, the hyperbolic family $\mathcal{H}$, and the interaction family $\mathcal{E}$. 

The shift family $\mathcal{Z}$ corresponds to the new terms induced by the shifts.
The viscous family $\mathcal{D}$ comes from the dissipation. 
The viscous term $\mathcal{D}_{y}$ corresponds to the transverse direction, and the viscous terms $\mathcal{D}_{x_1}^p, \mathcal{D}_{x_1}^r$ correspond to the $x_1$ direction. If we examine $\mathcal{D}_{x_1}^p, \mathcal{D}_{x_1}^r$ in the phase space, $\mathcal{D}_{x_1}^p$ corresponds to the special direction $\ro$ (respectively $\rn$) of the wave $S_1$ (respectively $W_n$), and $D_{x_1}^r$ corresponds to the other orthogonal directions. In Lemma \ref{lemma parabolic} in the next section, we see how $D_{x_1}^p$ helps us get the Poincar\'{e} type inequality.

The flux functions give the hyperbolic family $\mathcal{H}$. The hyperbolic term $\mathcal{H}_{S_1}$ (respectively $\mathcal{H}_{W_n}$) corresponds to the special direction $\ro$ (respectively $\rn$) of the wave $S_1$ (respectively $W_n$). The hyperbolic term $\mathcal{H}_C$ corresponds to the other orthogonal directions. The hyperbolic ``scalarization" happens in all directions except the special direction $\ro$ (respectively $\rn$) of the wave $S_1$ (respectively $W_n$). More precisely, the coefficient of $\mathcal{H}_C$ becomes negative if we choose sufficiently large $\Lambda_{S_1}, \Lambda_{W_n}$, while the coefficients of $\mathcal{H}_{S_1},\mathcal{H}_{W_n}$ are independent of the choice of $\Lambda_{S_1}, \Lambda_{W_n}$. In short, the hyperbolic ``scalarization" reduces the problem to the scalar case with hyperbolic remainders in the form of $\mathcal{H}_{S_1}, \mathcal{H}_{W_n}$.

Such hyperbolic remainder is negative if it corresponds to a planar rarefaction wave, but it is positive if it corresponds to a planar viscous shock wave. In Lemma \ref{lemma parabolic} in the next section, we show in detail how the dissipation $D_{x_1}^p$, the shift family $\mathcal{Z}$, and the hyperbolic terms corresponding to the other orthogonal directions $\mathcal{H}_C$ control the positive hyperbolic remainder by the Poincar\'{e} type inequality. While the dissipation $D_{x_1}^p$ dominates the $L^2$ norm of the derivative, the shifts will be chosen in a way that the shift family $\mathcal{Z}$ controls the average with the help of the hyperbolic terms corresponding to the other orthogonal directions $\mathcal{H}_C$. 

Finally, we can bound interaction terms corresponding to $E_1,E_2$ in (\ref{eq for superposition wave}) in the form of $\mathcal{E}$ by Lemma \ref{interaction bound}.


\begin{proof}
By the definition of relative entropy function (\ref{eq for REF}), (\ref{eq for sol}), and (\ref{eq for superposition wave}), we have
\begin{align*}
     &\frac{d}{dt} \int a(t,x) \, \eta\bigl(U(t,x)|\Tilde{U}(t, x)\bigl) \,dx \\
     &= \int \partial_t a \, \eta({U|\Tilde{U}})\, dx + \int a \,\Bigr[\bigl(\eta'(U) - \eta'(\Tilde{U})\bigl)\partial_t U -  \eta'' (\Tilde{U})(U-\Tilde{U})\partial_t \Tilde{U}\Bigr ] dx \\ 
     &= \int \partial_t a \, \eta({U|\Tilde{U}})\, dx + \int a \, \biggr [\bigl(\eta'(U) - \eta'(\Tilde{U})\bigl)\Bigl( \sum_{j=1}^d\partial_{x_j}\bigl(B_j(U)\partial_{x_j} \eta'(U)\bigl) \\&\quad - \partial_x f(U) - \sum_{j=2}^d g_j(U)\Bigl) 
     - \eta''(\Tilde{U})(U-\Tilde{U})\Bigl(\partial_{x_1} \bigl(B_1(\Tilde{U})\partial_{x_1} \eta' (\Tilde{U})\bigl) \\ &\quad  -\partial_x f(\Tilde{U}) + Z + E_1 + E_2\Bigl)\biggr] \,dx \, .
\end{align*}
According to definitions of (\ref{eq for relative flux}) and (\ref{eq for flux of RE}), we have
\begin{equation*}
    \frac{d}{dt} \int a(t,x)\, \eta\bigl(U(t,x)|\Tilde{U}(t, x)\bigl) \, dx = \mathcal{Z}_1 + \sum_{i=1}^{4} \mathcal{D}_i + \sum_{i=1}^{2} \mathcal{E}_i + \sum_{i=1}^{4} \mathcal{H}_i\, ,
\end{equation*}
where the shift term is
\begin{equation*}
    \begin{aligned}
        \mathcal{Z}_1&= \int \partial_t a \, \eta({U|\utl})\,dx  - \begin{cases}
            \sum_{i=1,n}\dot{\mathbf{X}}_i\int a\, \eta''(\utl)(U-\utl) \partial_{x_1} S_i^{\mathbf{X}_i}\, dx  \, , \textup{ if } W_n =S_n \, , \\
            \dot{\mathbf{X}}_1\int a\, \eta''(\utl)(U-\utl) \partial_{x_1} S_1^{\mathbf{X}_1}\, dx \, , \textup{ if } W_n = R_n \, ,
        \end{cases}  
    \end{aligned}
\end{equation*}
the dissipation gives
\begin{align*}
    \mathcal{D}_1 &= \int a\, \bigl(\eta'(U)-\eta'(\Tilde{U})\bigl)\partial_{x_1}\Bigl(B_1(U)\partial_{x_1} \bigl(\eta'(U)-\eta'(\Tilde{U})\bigl)\Bigl) \,dx\, ,\\
    \mathcal{D}_2 &= \int a\, \bigl (\eta'(U)-\eta'(\Tilde{U})\bigl)\partial_{x_1}\Bigl(\bigl(B_1(U)-B_1(\Tilde{U})\bigl)\partial_{x_1} \eta'(\Tilde{U})\Bigl) \,dx\, , \\
    \mathcal{D}_3 &= \int a \, \eta' (U|\Tilde{U}) \partial_{x_1}\bigl(B_1(\Tilde{U})\partial_{x_1} \eta'(\Tilde{U})\bigl) \,dx\, , \\
    \mathcal{D}_4 &= \sum_{j=2}^d\int a \,\bigl(\eta'(U)-\eta'(\Tilde{U})\bigl)\partial_{x_j}\bigl(B_j(U)\partial_{x_j} \eta'(U)\bigl) \,dx\, , 
\end{align*}
the wave interaction causes
\begin{align*}
    \mathcal{E}_1 & = -\int a \, \eta''(\Tilde{U})(U-\Tilde{U}) E_1 \,dx\, , \\
    \mathcal{E}_2 & = -\int a \, \eta''(\Tilde{U})(U-\Tilde{U}) E_2 \,dx\, ,
\end{align*}
and the flux induces
\begin{align*}
    \mathcal{H}_1 &= \int \partial_{x_1} a \, q_1(U;\Tilde{U})\,dx \, , \\
    \mathcal{H}_2 &= -\int a \, \partial_{x_1} \eta'(\Tilde{U}) f(U|\Tilde{U}) \,dx \, , \\
    \mathcal{H}_3 &= \sum_{j=2}^d\int \partial_{x_j} a \, q_j(U;\Tilde{U})\,dx \, , \\
    \mathcal{H}_4 &= -\sum_{j=2}^d\int a \, \partial_{x_j} \eta'(\Tilde{U}) g_{j}(U|\Tilde{U}) \,dx \, . 
\end{align*}
We are going to analyze those terms and get $\mathcal{Z}, \mathcal{D}, \mathcal{H},\mathcal{E}$ defined in the lemma. 

\noindent
\textbf{Hyperbolic parts and dissipation in $y$.} As $\so$ and $\wn$ are planar waves, 
$a$ and $\utl$ do not depend on $y$. Also, we know $B_j$ are positive definite and $\eta''$ is strictly convex. Therefore, we have
\begin{align*}
     \mathcal{H}_3 &= \mathcal{H}_4 = 0 \, , \\
     \mathcal{D}_4 &= - \sum_{j=2}^d \int a \, \partial_{x_j}\bigl(\eta'(U) - \eta'(\utl)\bigl)B_j(U)\partial_{x_j} \bigl(\eta'({U})- \eta'(\utl)\bigl)\,dx \\ 
     & \leq - C D_{y}\, .
\end{align*}

\noindent
\textbf{Dissipation in $x_1$.}
By integration by parts, we have
\begin{align*}
    \mathcal{D}_1
    =& - \int a \, \partial_{x_1}\bigl(\eta'(U)-\eta'(\Tilde{U})\bigl) B_1(U) \partial_{x_1} \bigl(\eta'(U)-\eta'(\Tilde{U})\bigl) \,dx\\
    & - \int \partial_{x_1} a \,  \bigl(\eta'(U)-\eta'(\Tilde{U})\bigl)B_1(U)\partial_{x_1} \bigl(\eta'(U)-\eta'(\Tilde{U})\bigl) \,dx\\
    =&: \,\mathcal{D}_{11} + \mathcal{D}_{12}\, , \\
    \mathcal{D}_2
    =& - \int a \, \partial_{x_1}\bigl(\eta'(U)-\eta'(\Tilde{U})\bigl)\bigl(B_1(U)-B_1(\Tilde{U})\bigl)\partial_{x_1} \eta'(\Tilde{U}) \,dx \\
    & - \int \partial_{x_1} a \, \bigl (\eta'(U)-\eta'(\Tilde{U})\bigl)\bigl(B_1(U)-B_1(\Tilde{U})\bigl)\partial_{x_1} \eta'(\Tilde{U}) \,dx \\
    =&: \,\mathcal{D}_{21} + \mathcal{D}_{22}\, .
\end{align*}
We apply projections (\ref{decomposition of S_1}) and (\ref{decomposition of W}) to $\mathcal{D}_{11}$. By (\ref{L infinity}) and the fact that $B_1$ is positive definite, we take $\epe,\epw$ small enough and get
\begin{align*}
    \mathcal{D}_{11} &\leq -\bigl(1-C(\epe+\epw)-C (\Lambda_{S_1}+\Lambda_{W_n})\epw\bigl) \\ 
    & \quad \Bigl(\int \bigl(\lo \partial_{x_1}\mu_1 + \sum_{i=2}^n \voi \partial_{x_1}\mu_i\bigl)B_1(\ul) \bigl(\lo \partial_{x_1}\mu_1 +\sum_{i=2}^n  \voi \partial_{x_1}\mu_i\bigl)\, dx \\
    &\quad  + \int \bigl(\lnn \partial_{x_1}\nu_n + \sum_{i=1}^{n-1}  \vti \partial_{x_1}\nu_i\bigl)B_1(\ul) \bigl(\lnn \partial_{x_1}\nu_n + \sum_{i=1}^{n-1} \vti \partial_{x_1}\nu_i\bigl)\, dx\Bigl)
    \\&\leq - \bigl(1-C(\epe+\epw^{1/2})\bigl)(\mathcal{D}_{x_1}^{p} + \mathcal{D}_{x_1}^{r})\, .
\end{align*}
Compared with $\mathcal{D}_{11}$, $\partial_{x_1} a$ and $\partial_{x_1} \eta'(\Tilde{U})$ give extra smallness to $\mathcal{D}_{12}, \mathcal{D}_{21}, \mathcal{D}_{22}$.By Young's inequality, (\ref{k_S as main part}), and (\ref{k_R as main part}), we take $\epe,\epw$ small enough and get
\begin{align*}
    \mathcal{D}_1 + \mathcal{D}_2 
    &\leq  - \bigl(1-C(\epe+\epw^{1/4})\bigl)(\mathcal{D}_{x_1}^{p} + \mathcal{D}_{x_1}^{r})  \\&\quad + C \epw^{1/2}\int \bigl(|\partial_{x_1} \so| + |\partial_{x_1} W_n^{\x_n}|\bigl) |U-\Tilde{U}|^2 \, dx\, .
\end{align*}
We have
\begin{align*}
    \eta'(U)-\eta'(\utl) = \mu_1 \lo + \sum_{i=2}^n\mu_i  \voi = \nu_n \lnn + \sum_{i=1}^{n-1}\nu_i \vti \, ,
\end{align*}
which implies
\begin{align*}
    &\mu_1 \lo\cdot \ro + \sum_{i=2}^n\mu_i  \voi\cdot \ro = \sum_{i=1}^{n-1}\nu_i \vti \cdot \ro \, , \\
    &\sum_{i=2}^n\mu_i  \voi\cdot \rn = \nu_n \lnn \cdot \rn+ \sum_{i=1}^{n-1}\nu_i \vti \cdot \rn\, .
\end{align*}
As $\lo\cdot \ro, \lnn\cdot \rn \neq 0$, we get
\begin{align*}
    \sum_{i=2}^n|\partial_{x_1}\mu_i|^2 + \sum_{i=1}^{n-1}|\partial_{x_1}\nu_i|^2 \geq C \bigl( |\partial_{x_1}\mu_1|^2 + |\partial_{x_1}\nu_n|^2 \bigl)\, ,
\end{align*}
for some $C>0$. Thus there exists some constant $\gamma>0$ that depends only on $B_j,f,\eta,\ul$ such that
\begin{align*}
    \frac{1}{2}\mathcal{D}_{x_1}^{r} \geq 4\gamma \mathcal{D}_{x_1}^{p} \, .
\end{align*}
Taking $\epe, \epw$ small enough, we get
\begin{align*}
    \mathcal{D}_1 + \mathcal{D}_2 
    &\leq - (1+3\gamma)\mathcal{D}_{x_1}^{p} - \frac{1}{4} \mathcal{D}_{x_1}^{r}  \\ &\quad + C \epw^{1/2} \int \bigl(|\partial_{x_1} \so| + |\partial_{x_1} W_n^{\x_n}|\bigl) |U-\Tilde{U}|^2 \, dx\, .
\end{align*}
By the chain rule, Lemma \ref{pointwise bound of shock}, Lemma \ref{property of rarefaction}, and (\ref{L infinity}), we have
\begin{align*}
    |\mathcal{D}_3| \leq  C \epw \int \bigl(|\partial_{x_1} \so| + |\partial_{x_1} W_n^{\x_n}|\bigl) |U-\Tilde{U}|^2 \,dx + C \epe \int |U-\utl||\partial_{x_1}^2 R_n| \,dx\, .
\end{align*}

\noindent
\textbf{Hyperbolic parts in $x_1$.}
By the definition of the flux of the relative entropy (\ref{eq for flux of RE}), (\ref{L infinity}), (\ref{k_S as main part}), and (\ref{k_R as main part}), we take $\epe, \epw$ small enough and get
\begin{align*}
    \mathcal{H}_1 &= \int \partial_{x_1} a \, q_1(U;\Tilde{U})\,dx\\
    &= \frac{1}{2} (I_1+I_2) + C(\epe^{1/2}+\epw^{1/2}) \int \bigl(|\partial_{x_1} \so| + |\partial_{x_1} W_n^{\x_n}|\bigl) |U-\Tilde{U}|^2 \, dx \, ,
\end{align*}
where
\begin{align*}
    I_1=\int \partial_{x_1} \asxo (U-\Tilde{U}) \eta''(\ul)f'(\ul)(U-\Tilde{U}) \, dx \, , \\
    I_2=\int \partial_{x_1} \aw (U-\Tilde{U}) \eta''(\ul)f'(\ul)(U-\Tilde{U}) \, dx \, .
\end{align*}
We apply (\ref{decomposition of S_1 taylor}) to $I_1$. Recall (\ref{def of projection}). Let 
\begin{align*}
    v = \sum_{i=2}^n \mu_i \eta''(\ul)^{-1} \voi \in V \, .
\end{align*}
By (\ref{trick eta'' f'}), (\ref{decomposition of S_1 taylor}), (\ref{property of perturbation 1}), and (\ref{property of perturbation 2}), we take $\epe, \epw$ small enough and get
\begin{align*}
    I_1 &= \int \partial_{x_1} \asxo (U-\Tilde{U}) \eta''(\ul)\bigl(f'(\ul)-\lambda_1 I\bigl)(U-\Tilde{U}) \, dx + 2 \mathcal{Z}_2\\
    &\leq - {\Lambda_{S_1}} (\lambda_2-\lambda_1) \eps \int \eta'' (\ul)P(v) \cdot P(v) \dkoo \, dx \\
    &\quad + C (\epe^{1/2}+\epw^{1/2}) \int |\partial_{x_1} \so| |U-\Tilde{U}|^2 \, dx + 2 \mathcal{Z}_2\\
    &\leq - C {\Lambda_{S_1}} (\lambda_2-\lambda_1)\sum_{i=2}^n \eps  \int \mu_i^2 \dkoo \, dx  \\
    &\quad + C (\epe^{1/2}+\epw^{1/2}) \int |\partial_{x_1} \so| |U-\Tilde{U}|^2 \, dx + 2 \mathcal{Z}_2\, ,
\end{align*}
where
\begin{align*}
    \mathcal{Z}_2 = \frac{\lambda_1}{2} \int \partial_{x_1} \asxo (U-\Tilde{U}) \eta''(\ul)(U-\Tilde{U}) \, dx \, .
\end{align*}
Similarly, we get
\begin{align*}
    I_2& = \int \partial_{x_1} \aw (U-\Tilde{U}) \eta''(\ul)\bigl(f'(\ul)-\lambda_n I\bigl)(U-\Tilde{U}) \, dx + 2\mathcal{Z}_3 \\
    &\leq C \Lambda_{W_n} (\lambda_{n-1}-\lambda_n) \sum_{i=1}^{n-1} \epww \int \nu_i ^ 2 \dkww \, dx  \\
    &\quad + C (\epe^{1/2}+\epw^{1/2}) \int |\partial_{x_1} \wn| |U-\Tilde{U}|^2 \, dx + 2\mathcal{Z}_3 \, , 
\end{align*}
where
\begin{align*}
    \mathcal{Z}_3 = \frac{\lambda_n}{2}\int \partial_{x_1} \aw (U-\Tilde{U}) \eta''(\ul)(U-\Tilde{U}) \, dx \, .
\end{align*}
As $\lambda_2-\lambda_1, \lambda_n-\lambda_{n-1}>0$, we get
\begin{align*}
    \mathcal{H}_1&\leq-C \textup{min}\{\Lambda_{S_1},\Lambda_{W_n}\} \mathcal{H}_{C}+ \mathcal{Z}_2 + \mathcal{Z}_3\\ &\quad + C(\epe^{1/2}+\epw^{1/2}) \int \bigl(|\partial_{x_1} \so|+|\partial_{x_1} \wn|\bigl)|U-\Tilde{U}|^2 \, dx  \, .
\end{align*}
By the definition of the relative flux (\ref{eq for relative flux}) and (\ref{L infinity}), we take $\epe, \epw$ small enough and get
\begin{align*}
    \mathcal{H}_2 &= -\int a \, \partial_{x_1} \eta'(\Tilde{U}) f(U|\Tilde{U}) \,dx  \\
    &\leq I_3+I_4 + C(\epe+\epw^{1/2})\int \bigl(|\partial_{x_1} \so|+|\partial_{x_1} \wn|\bigl) |U-\utl|^2\,dx\, ,
\end{align*}
where
\begin{align*}
    I_3 &= - \eps \int \dkoo \,\lo\cdot f''(\ul) : (U-\utl)\otimes (U-\utl) \,dx \, , \\
    I_4 &= 
    \begin{cases}
        -\epww \int  \dktt \,\lnn \cdot f''(\ul) : (U-\utl)\otimes (U-\utl) \,dx \, , \textup{ if } W_n=S_n \, , \\
        \epww \int  \dkrr \, \lnn \cdot f''(\ul) : (U-\utl)\otimes (U-\utl) \,dx \, , \textup{ if } W_n=R_n \, .
    \end{cases}
\end{align*}
We apply (\ref{decomposition of S_1 taylor}) to $I_3$. As the hyperbolic ``scalarization" will happen in all the directions except the special direction $\ro$ of the wave $\so$, we use Young's inequality to make the hyperbolic term in the $\ro$ direction as small as possible
\begin{equation*}
    \begin{aligned}
        & I_3 -C(\epe+\epw)\int \bigl(|\partial_{x_1} \so|+|\partial_{x_1} \wn|\bigl) |U-\utl|^2\,dx 
        \\ &\leq (1+\frac{\de}{2}) \mathcal{H}_{S_1} + \frac{C}{\de} \sum_{i=2}^{n} \eps \int \mu_i^2 \dkoo \, dx  \, .
    \end{aligned}
\end{equation*}
Similarly, we get
\begin{align*}
    &I_4 - C(\epe+\epw)\int \bigl(|\partial_{x_1} \so|+|\partial_{x_1} \wn|\bigl) |U-\utl|^2\,dx
    \\ &\leq \frac{C}{\de} \sum_{i=1}^{n-1} \epww \int \nu_i^2 \dkww \, dx  
        \begin{cases}
            + (1+\frac{\de}{2}) \mathcal{H}_{S_n} \, , \textup{ if } W_n = S_n \, ,\\
            -(1-\frac{\de}{2}) \mathcal{H}_{R_n} \, , \textup{ if } W_n = R_n \, . 
        \end{cases}
\end{align*}
Then we have
\begin{equation*}
    \begin{aligned}
        &\mathcal{H}_2 - C(\epe+\epw^{1/2})\int \bigl(|\partial_{x_1} \so|+|\partial_{x_1} \wn|\bigl)|U-\utl|^2\,dx \\ &\leq \frac{C}{\de}\mathcal{H}_{C} + (1+\frac{\de}{2}) \mathcal{H}_{S_1} \begin{cases}
            + (1+\frac{\de}{2}) \mathcal{H}_{S_n} \, , \textup{ if } W_n = S_n \, ,\\
            -(1-\frac{\de}{2}) \mathcal{H}_{R_n} \, , \textup{ if } W_n = R_n \, .
        \end{cases}
    \end{aligned}
\end{equation*}
In all, we get
\begin{equation*}
    \begin{aligned}
        &\mathcal{H}_1 + \mathcal{H}_2 - C (\epe^{1/2}+\epw^{1/2}) \int \bigl(|\partial_{x_1} \so|+|\partial_{x_1} \wn|\bigl)|U-\Tilde{U}|^2 \, dx - \mathcal{Z}_2 -\mathcal{Z}_3\\ 
        &\leq \bigl(-C\textup{min}\{\Lambda_{S_1},\Lambda_{W_n}\} + \frac{C}{\de}\bigl) \mathcal{H}_C + (1+\frac{\de}{2}) \mathcal{H}_{S_1} \\
        &\quad 
        \begin{cases}
            +(1+\frac{\de}{2}) \mathcal{H}_{S_n} \, , \textup{ if } W_n = S_n \, ,\\
            -(1-\frac{\de}{2}) \mathcal{H}_{R_n} \, , \textup{ if } W_n = R_n \, .
        \end{cases}
    \end{aligned}
\end{equation*}

\noindent
\textbf{Shift terms}. We have
\begin{equation*}
    \begin{aligned}
        \partial_t a = (\dot{\x}_1-\sio) \partial_{x_1} a_{S_1}^{\x_1} +
        \begin{cases}
            (\dot{\x}_n - \sit) \partial_{x_1} a_{S_n}^{\x_n} \, , \textup{ if } W_n = S_n \, , \\
            \Lambda_{R_n}\partial_{x_1} f(R_n) \cdot \lnn \, , \textup{ if } W_n = R_n \, .
        \end{cases}
    \end{aligned}
\end{equation*}
By (\ref{k_S as main part}) and (\ref{k_R as main part}), we take $\epe, \epw$ small enough and get
\begin{align*}
    \mathcal{Z}_1 + \mathcal{Z}_2 + \mathcal{Z}_3 &= \mathcal{Z} + \mathcal{Z}_4 + C(\epe^{1/2}+\epw^{1/2})\int \bigl(|\partial_{x_1} \so|+|\partial_{x_1} \wn|\bigl)|U-\utl|^2\,dx \, ,
\end{align*}
where
\begin{equation*}
    \begin{aligned}
        \mathcal{Z}_4 = \begin{cases}
            0 \, , \textup{ if } W_n = S_n \, ,\\ 
            \Lambda_{R_n} \int \bigl(\partial_{x_1} f(R_n) -\lambda_n \partial_{x_1} R_n\bigl)\cdot \lnn \, \eta(U|\Tilde{U}) \, dx\, , \textup{ if } W_n = R_n \, .
        \end{cases} \, .
    \end{aligned}
\end{equation*}
By Taylor expansion and Lemma \ref{property of rarefaction}, we take $\epe, \epw$ small enough and get
\begin{align*}
    |\mathcal{Z}_4| &\leq C \Lambda_{R_n} \int \big|\bigl(f'(\ul)-{\lambda_n}I\bigl)\partial_{x_1} R_n \cdot \lnn\big| |U-\utl|^2 \, dx \\
    &\quad + C \Lambda_{R_n} \epw \int |\partial_{x_1}R_n| |U-\utl|^2 \, dx \\
    &\leq C \epw^{1/2} \int |\partial_{x_1} R_n| |U-\utl|^2 \, dx \, .
\end{align*}
In all, we get
\begin{align*}
    \mathcal{Z}_1 + \mathcal{Z}_2 + \mathcal{Z}_3\leq \mathcal{Z} + C(\epe^{1/2}+\epw^{1/2})\int \bigl(|\partial_{x_1} \so|+|\partial_{x_1} \wn|\bigl)|U-\utl|^2 \,dx\, .
\end{align*}

\noindent
\textbf{Interaction terms.} Note $\mathcal{H}_C, \mathcal{H}_{S_1}, \mathcal{H}_{W_n}\geq 0$. We have
\begin{align}\label{commen error}
    \int \bigl(|\partial_{x_1} \so|+|\partial_{x_1} \wn|\bigl)|U-\utl|^2 \leq C (\mathcal{H}_C + \mathcal{H}_{S_1} + \mathcal{H}_{W_n}) \, .
\end{align}
If $W_n=S_n$, then Lemma \ref{lemm reorganization} and Lemma \ref{interaction bound} give
\begin{align*}
    &|\mathcal{E}_1| + |\mathcal{E}_2| \\
    &\leq C \int |U-\utl| \Bigl(|\partial_{x_1} \so||\st-\um| + |\partial_{x_1} \st||\so-\um| \\&\quad +|\partial_{x_1} \st||\partial_{x_1} \so |\Bigl) \, dx\\
    &\leq C \Bigl(\int \bigl(|\partial_{x_1}\so| + |\partial_{x_1}\st|\bigl) |U-\utl|^2 \, dx\Bigl)^{1/2} \Bigl(\big\||\partial_{x_1} \so| |\st-\um|^2\big\|_{L^1}^{1/2} 
    \\&\quad + \big\||\partial_{x_1} \st| |\so-\um|^2 \big\|_{L^1}^{1/2} + \big\||\partial_{x_1} \st||\partial_{x_1} \so |\big\|_{L^1}^{1/2}\Bigl) \\
    &\leq \frac{\de}{4}(\mathcal{H}_C + \mathcal{H}_{S_1} + \mathcal{H}_{W_n})  +\frac{C}{\de} (\epws^2 \eps e^{-C\eps t} + \epws \eps^2 e^{-C\epws t})
    \, .
\end{align*}
If $W_n=R_n$, then (\ref{L infinity bound}), Lemma \ref{lemm reorganization}, and Lemma \ref{interaction bound} give
\begin{align*}
    &|\mathcal{E}_1| + |\mathcal{E}_2| \\
    & \leq C \int |U-\Tilde{U}| \Bigl(|\partial_{x_1}^2 R_n| + |\partial_{x_1} R_n|^2 + |\partial_{x_1} R_n||\partial_{x_1} \so|\\
    &\quad + |\partial_{x_1} R_n||\so-\um| + |\partial_{x_1} \so||R_n-\um|\Bigl) \, dx \\
    &\leq C \int |U-\Tilde{U}| |\partial_{x_1}^2 R_n| + C\epe \Bigl(\|\partial_{x_1} R_n\|_{L^4}^2+ \big\||\partial_{x_1} R_n||\partial_{x_1} \so|\big\|_{L^2} \\ 
    &\quad + \big\||R_n-\um| |\partial_{x_1} \so|\big\|_{L^2} + \big\||\so-\um| |\partial_{x_1} R_n|\big\|_{L^2} \Bigl)
    \\
    &\leq C_E \int |U-\Tilde{U}| |\partial_{x_1}^2 R_n| + C \epe \|\partial_{x_1} R_n\|_{L^4}^2 + C\epe \epr\eps e^{-C\eps t}\,  ,
\end{align*}
where $C_E>0$ is a constant that depends only on $B_j,f,\eta,\ul$.

\noindent
\textbf{Some estimates.}
By H\"{o}lder's inequality and the Gagliardo-Nirenberg inequality proved in \cite{Ninequality}, we have 
\begin{equation*}
    \int |U-\utl||\partial_{x_1}^2 R_n| \, dx \leq
    C \big\|\partial_{x_1}(U-\utl)\big\|_{L^2}^{1/2} \|U-\utl\|_{L^2}^{1/2} 
    \|\partial_{x_1}^2 R_n\|_{L^{1}} \, . 
\end{equation*}
The Young's inequality and (\ref{L infinity bound}) give
\begin{equation*}
    2C_E\int |U-\utl||\partial_{x_1}^2 R_n| \, dx \leq
    \frac{C}{\gamma^{1/3}}\epe^{2/3} \|\partial_{x_1}^2 R_n\|_{L^{1}}^{4/3} + {\gamma} D_{x_1}^{p} + \frac{1}{8} D_{x_1}^r \, .
\end{equation*}
Finally, we can take $\epe,\epw$ small enough and prove the lemma because of (\ref{commen error}).
\end{proof}
\subsection{Poincar\'{e} type inequality}\label{subsec poincare} 
\begin{lemm}\label{lemma parabolic}
    For any $\ul \in \mathbb{R}^n$, any $0<\gamma<1$, and any $\Lambda_{S_1},\Lambda_{W_n}, \Tilde{C}_1, \Tilde{C}_n>0$, there exist $\epw, \epe, C>0$ such that the following is true. 
    
    \noindent 
    Assume Assumption \ref{assumption}. Let $\mn_i$ be $\mu_1$ if $i=1$ and be $\nu_n$ if $i=n$. Then for any $i\in\{1,n\}$ and any $t\in[0,T]$,
    \begin{align*}
    &\mathcal{H}_{S_i}(U) - (1+\gamma) D_{x_1}^{p,i}(U) + \dot{\x}_i(t) \mathcal{Y}_i(U) \\ 
    &\leq -C \gamma \int |\partial_{x_1}{\mn_i}| ^2 \, dx- C \gamma \epss \int \mn_i ^ 2 \dkk \, dx- \frac{\epss}{2\Tilde{C}_i}|\dot{\mathbf{X}}_i(t)| ^2  \\
    &\quad + \bigl(- \frac{\mathbf{C}_3}{\epss} \Tilde{C}_i  + \frac{\mathbf{C}_4}{\epss} \bigl)\bigl(\epss \int \mn_i \dkk \, dx \bigl)^2
    + \mathbf{C}_5 \Tilde{C}_i \mathcal{H}_C(U) + C \epss D_y(U)\, ,
    \end{align*}
    where $\mathbf{C}_3, \mathbf{C}_4, \mathbf{C}_5>0$ are constants that depend only on $B_j,f,\eta,\ul$, 
    \begin{align*}
    D_{x_1}^{p,i}(U) &= {B_1(\ul)\li \cdot \li} \int |\partial_{x_1}{\mn_i}| ^2 \, dx \, ,
    \end{align*}
    and $\mathcal{H}_{S_i},\mathcal{H}_C,\mathcal{D}_y, \mathcal{Y}_i$ are defined in Lemma \ref{lemma rem}.
\end{lemm}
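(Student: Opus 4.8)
The plan is to run the Poincar\'{e} (``scalarization'') step of the $a$-contraction method, with one extra reduction forced by the transverse variable. Write $h:=\mn_i$ (so $h=\mu_1$ if $i=1$, $h=\nu_n$ if $i=n$); by (\ref{r curve opposite s curve}), $\cfi<0$, and $\mathcal{H}_{S_i}(U)=-\cfi\,\epss\int h^2\,\dkk\,dx$, $D_{x_1}^{p,i}(U)=B_1(\ul)\,\li\cdot\li\int|\partial_{x_1}h|^2\,dx$. First I would, at fixed $t$ and fixed $y\in\mathbb{T}^{d-1}$, change variables $x_1\mapsto z:=k_{S_i}^{\x_i}(t,\cdot)$, a strictly increasing bijection of $\mathbb{R}$ onto $(0,1)$ with $\partial_{x_1}k_{S_i}^{\x_i}\,dx_1=dz$ and $\int_{\mathbb{R}}\partial_{x_1}k_{S_i}^{\x_i}\,dx_1=1$. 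By the profile ODE (\ref{derivative of k}), as a function of $z$ the weight $\partial_{x_1}k_{S_i}^{\x_i}$ is a fixed positive multiple of $\epss\,z(1-z)$ up to a factor $1+O(\epss)$, and $\|\partial_{x_1}k_{S_i}^{\x_i}\|_{L^\infty}\lesssim\epss$. Writing $h(z,y)$ for the transformed unknown and $\bar h(y):=\int_0^1 h(z,y)\,dz=\int_{\mathbb{R}}h\,\partial_{x_1}k_{S_i}^{\x_i}\,dx_1$, the change of variables turns $\mathcal{H}_{S_i}$ into $-\cfi\,\epss\int_{\mathbb{T}^{d-1}}\!\int_0^1 h^2$ and $D_{x_1}^{p,i}$ into $-\tfrac{\cfi}{2}\epss\int_{\mathbb{T}^{d-1}}\!\int_0^1 z(1-z)|\partial_z h|^2$ (the latter up to $\lesssim\epss D_{x_1}^{p,i}$, absorbed by the gain below); the relation between the two is now exactly the content of Lemma \ref{Poincare inequality lemma}.

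Applying Lemma \ref{Poincare inequality lemma} in $z$ on each transverse slice, and using $\int_0^1 h^2\,dz=\int_0^1|h-\bar h|^2\,dz+\bar h^2$, gives $\mathcal{H}_{S_i}-(1+\gamma)D_{x_1}^{p,i}\leq-\gamma D_{x_1}^{p,i}+(-\cfi)\epss\int_{\mathbb{T}^{d-1}}\bar h^2\,dy$. Splitting $-\gamma D_{x_1}^{p,i}$ in half and invoking $\int_0^1 z(1-z)|\partial_z h|^2\,dz\geq 2\int_0^1|h-\bar h|^2\,dz$ (Lemma \ref{Poincare inequality lemma} again) produces both the $-C\gamma\int|\partial_{x_1}h|^2\,dx$ term and $-C\gamma\epss\int_{\mathbb{T}^{d-1}}\!\int_0^1|h-\bar h|^2$; since $\epss\int h^2\,\dkk\,dx=\epss\int_{\mathbb{T}^{d-1}}\!\int_0^1|h-\bar h|^2+\epss\int_{\mathbb{T}^{d-1}}\bar h^2$, the second of these gives $-C\gamma\epss\int h^2\,\dkk\,dx$ up to a $+C\gamma\epss\int_{\mathbb{T}^{d-1}}\bar h^2$ merged into the remaining ``average'' term $(-\cfi+C\gamma)\epss\int_{\mathbb{T}^{d-1}}\bar h^2\,dy$.

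It remains to dispose of this average term with the transverse dissipation and the shift. A second, periodic, Poincar\'{e} in $y$ gives $\int_{\mathbb{T}^{d-1}}\bar h^2=\int_{\mathbb{T}^{d-1}}|\bar h-\langle\bar h\rangle|^2+|\mathbb{T}^{d-1}|\langle\bar h\rangle^2$ with $\langle\bar h\rangle$ the $y$-mean. Since $\utl$ is $y$-independent, $h$ is a bounded linear combination of $U-\utl$ with $|\partial_y h|\lesssim|\partial_y(U-\utl)|$, so $\partial_y\bar h=\int_{\mathbb{R}}\partial_y h\,\partial_{x_1}k_{S_i}^{\x_i}\,dx_1$ together with Cauchy--Schwarz, $\int\partial_{x_1}k_{S_i}^{\x_i}=1$ and $\|\partial_{x_1}k_{S_i}^{\x_i}\|_{L^\infty}\lesssim\epss$ give $\epss\int_{\mathbb{T}^{d-1}}|\bar h-\langle\bar h\rangle|^2\lesssim\epss^2\int|\partial_y h|^2\,dx\lesssim\epss\,\mathcal{D}_y$, i.e. the $C\epss\mathcal{D}_y$ term; and $|\mathbb{T}^{d-1}|\langle\bar h\rangle^2=\tfrac{1}{|\mathbb{T}^{d-1}|\epss^2}\bigl(\epss\int h\,\dkk\,dx\bigr)^2$ gives $\tfrac{\mathbf{C}_4}{\epss}\bigl(\epss\int h\,\dkk\,dx\bigr)^2$ with $\mathbf{C}_4=(-\cfi)/|\mathbb{T}^{d-1}|$ (the $C\gamma$ correction only nudges $\mathbf{C}_4$). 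For the shift, substitute the ODE (\ref{def of shift}) into $\mathcal{Y}_i$ to get $\mathcal{Y}_i=\int\partial_{x_1}a_{S_i}^{\x_i}\eta(U|\utl)\,dx-\tfrac{\epss}{\Tilde{C}_i}\dot{\x}_i$, so $\dot{\x}_i\mathcal{Y}_i=\dot{\x}_i\int\partial_{x_1}a_{S_i}^{\x_i}\eta(U|\utl)\,dx-\tfrac{\epss}{\Tilde{C}_i}|\dot{\x}_i|^2$. I keep $-\tfrac{\epss}{2\Tilde{C}_i}|\dot{\x}_i|^2$, absorb the cross term by Young's inequality against $\tfrac{\epss}{4\Tilde{C}_i}|\dot{\x}_i|^2$ — its residual $\tfrac{\Tilde{C}_i}{\epss}\bigl(\int\partial_{x_1}a_{S_i}^{\x_i}\eta(U|\utl)\bigr)^2\lesssim\Tilde{C}_i\epe^2(\mathcal{H}_C+\mathcal{H}_{S_i})$ by (\ref{eq for REF}), (\ref{L infinity}), $\partial_{x_1}a_{S_i}^{\x_i}=O(\epss\,\partial_{x_1}k_{S_i}^{\x_i})$ and $\epss\int\partial_{x_1}k_{S_i}^{\x_i}|U-\utl|^2\lesssim\mathcal{H}_C+\mathcal{H}_{S_i}$, hence absorbed into $\mathbf{C}_5\Tilde{C}_i\mathcal{H}_C$ and, for $\epe$ small, into a slight shrinking of the coefficient of $\mathcal{H}_{S_i}$ before the Poincar\'{e} step — and spend the remaining $-\tfrac{\epss}{4\Tilde{C}_i}|\dot{\x}_i|^2$ via $|\dot{\x}_i|^2\geq\tfrac{\Tilde{C}_i^2}{2\epss^2}\bigl(\epss\int h\,\dkk\,dx\bigr)^2-\tfrac{\Tilde{C}_i^2}{\epss}\mathcal{H}_C$, which follows from $\dot{\x}_i=\Tilde{C}_i\int h\,\partial_{x_1}k_{S_i}^{\x_i}\,dx+R$ with $|R|^2\lesssim\tfrac{\Tilde{C}_i^2}{\epss}\mathcal{H}_C$ by (\ref{k_S as main part}) and (\ref{decomposition of S_1 taylor}). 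This delivers $-\tfrac{\mathbf{C}_3}{\epss}\Tilde{C}_i\bigl(\epss\int h\,\dkk\,dx\bigr)^2$ with $\mathbf{C}_3=\tfrac18$ and the remaining $\mathbf{C}_5\Tilde{C}_i\mathcal{H}_C$; collecting everything and taking $\epw,\epe$ small yields the claim.

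The hard part will be precisely this double Poincar\'{e}. The first one, performed in $x_1$ through the viscous profile, only scalarizes $\mathcal{H}_{S_i}$ down to an average in $z$, and the resulting average term $\epss\int_{\mathbb{T}^{d-1}}\bar h^2\,dy$ cannot be handled by the shift, since the shift ODE (\ref{def of shift}) controls only the full mean $\int h\,\dkk\,dx=\int_{\mathbb{T}^{d-1}}\bar h\,dy$. One is thus forced into a second Poincar\'{e} in the transverse variable, paying for the $y$-fluctuation of $\bar h$ with the transverse dissipation $\mathcal{D}_y$ — which works only because $\utl$ does not depend on $y$, so $|\partial_y h|\lesssim|\partial_y(U-\utl)|$ with no extra loss — before the genuinely scalar $y$-mean can be fed to the shift mechanism. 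Secondary to this is the customary delicate bookkeeping: every $O(\epss),O(\epw),O(\epe)$ error (from $a\approx1$, $\eta''(\utl)\approx\eta''(\ul)$, $\partial_{x_1}S_i^{\x_i}\approx\epss\,\partial_{x_1}k_{S_i}^{\x_i}\,\mathbf{r_i}$, the Taylor remainders in (\ref{decomposition of S_1 taylor}), and the non-Euclidean cross terms in the adapted bases (\ref{decomposition of S_1})--(\ref{decomposition of W})) must be shown to land inside $\mathcal{H}_C$, $\epss\mathcal{D}_y$, or a $\gamma$-fraction of the negative terms already obtained, never spilling onto $\mathcal{H}_{S_i}$ or onto the average term with the wrong sign.
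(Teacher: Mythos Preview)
Your proposal is correct and follows essentially the same route as the paper: the change of variables $x_1\mapsto z=k_{S_i}^{\x_i}$ at fixed $(t,y)$, Lemma~\ref{Poincare inequality lemma} in $z$, the classical Poincar\'{e} in $y$ to reduce $\int_{\mathbb{T}^{d-1}}\bar h^2$ to the full mean plus $\epss\mathcal{D}_y$, and the same shift identity $\mathcal{Y}_i=\int\partial_{x_1}a_{S_i}^{\x_i}\eta(U|\utl)-\tfrac{\epss}{\Tilde{C}_i}\dot{\x}_i$ together with a lower bound on $|\dot{\x}_i|^2$ in terms of $\bigl(\epss\int\mn_i\,\dkk\bigr)^2$ minus $\mathcal{H}_C$-type errors. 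The only cosmetic difference is that the paper bounds the cross term $\dot{\x}_i\int\partial_{x_1}a_{S_i}^{\x_i}\eta(U|\utl)$ directly as a product (both factors involve $\int|U-\utl||\partial_{x_1}\si|$) rather than via Young's against $\tfrac{\epss}{4\Tilde{C}_i}|\dot{\x}_i|^2$, and it absorbs the small $\mn_i^2$ leftover into the already-secured $-C\gamma\epss\int\mn_i^2\,\dkk$ rather than into a pre-Poincar\'{e} shrinking of $\mathcal{H}_{S_i}$; either bookkeeping works.
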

\noindent
\textit{Remark}. By taking $\Tilde{C}_i$ large enough, the positive remainders are $\mathbf{C}_5 \Tilde{C}_i \mathcal{H}_C$ and $C \epss D_y(U)$. We will take $\Lambda_{S_1},\Lambda_{W_n}$ large enough to control $\mathbf{C}_5 \Tilde{C}_i \mathcal{H}_C$ and $\epw$ small enough to control $C \epss D_y(U)$ in Lemma \ref{lemma L^2 estimate} in subsection \ref{subsec L^2 contraction}.
\begin{proof}
First, we fix the time $t$ and the transverse direction $y$ and compactify the problem by changing variables. Let
\begin{align*}
    (\mathcal{H}_{S_i})^{t,y}  &= - \cfi \epss \int \mn_i\bigl(t,(x_1,y)\bigl) ^ 2 \dkk \, d{x_1} \, , \\
    (D_{x_1}^{p,i})^{t,y} &= {B_1(\ul)\li \cdot \li} \int \big|\partial_{x_1}{\mn_i}\bigl(t,(x_1,y)\bigl)\big| ^2 \, d x_1 \, .
\end{align*}
As ${k_{S_i}^{\x_i}}$ is strictly increasing, we define the change of variable
\begin{equation*}
    x_1\mapsto z:= {k_{S_i}^{\x_i}}\bigl(t,(x_1,y)\bigl) \text{ and } h(z) = \mn_i\bigl(t,(x_1,y)\bigl) \, .
\end{equation*}
By (\ref{derivative of k}), we can take $\epw$ small enough and get
\begin{align*}
    (\mathcal{H}_{S_i})^{t,y} &= - \cfi \epss \int_0^1 h(z) ^ 2  \, dz  \, , \\
    -(1+\gamma)(D_{x_1}^{p,i})^{t,y} &\leq (\frac{1}{2}+\frac{\gamma}{4}) \, \cfi \epss \int_0^1 h'(z)^2 z (1-z) \,dz \, .
\end{align*}
Lemma \ref{Poincare inequality lemma} gives
\begin{align*}
    \int_0^1 h(z)^2\, dz &= \int_0^1 \bigl(h(z) - \int_0^1 h(z)\,dz\bigl)^2  \, dz +  \bigl(\int_0^1 h(z)\,dz\bigl) ^ 2\\
    &\leq \frac{1}{2} \int_0^1 h'(z)^2 z(1-z)\, dz + \bigl(\int_0^1 h(z)\,dz\bigl) ^ 2\, ,
\end{align*}
which implies
\begin{align*}
    &(\mathcal{H}_{S_i})^{t,y}-(1+\gamma)(D_{x_1}^{p,i})^{t,y} \\ &\leq - C\gamma \epss \int \mn_i ^ 2 \dkk \, d x_1 -C\gamma \int |\partial_{x_1}{\mn_i}| ^2 \, d x_1  + \frac{C}{\epss} \bigl(\epss \int \mn_i \dkk \, d x_1\bigl)^2 \, ,
\end{align*}
where $C>0$ is independent of $t,y$.
Therefore, we have
\begin{align*}
    &\mathcal{H}_{S_i}(U) - (1+\gamma) D_{x_1}^{p,i}(U)\\ &\leq - C\gamma \epss \int \mn_i ^ 2 \dkk \, dx -C\gamma \int |{\partial_{x_1} \mn_i}| ^2 \, dx  + \frac{C}{\epss} \int \bigl(\epss \int \mn_i \dkk \, d{x_1}\bigl)^2 dy \, .
\end{align*}
Using the classical Poincar\'{e} inequality, we find
\begin{align*}
    &\int \bigl(\epss \int \mn_i \dkk \, d x_1\bigl)^2 \,dy \\
    &\leq  C\bigl(\epss \int \mn_i \dkk \, dx\bigl)^2 
    +C \int \big|\nabla_y\bigl(\epss \int \mn_i \dkk \, d x_1\bigl)\big|^2 \,dy \\
    &\leq  C\bigl(\epss \int \mn_i \dkk \, dx\bigl)^2 
    +C{\epss^2}\mathcal{D}_y\, .
\end{align*}
By Young's inequality, H\"{o}lder's inequality, (\ref{k_S as main part}), and (\ref{L infinity}), we take $\epe,\epw$ small enough and get
\begin{align*}
    (\frac{\epss}{\Tilde{C}_i})^2|\dot{\mathbf{X}}_i(t)| ^2 &
    = \bigl(\int a \, \eta''(\Tilde{U})(U-\Tilde{U}) \partial_{x_1} \si \, dx\bigl)^2 \\
    &\geq \bigl(\int  \bigl(\eta'(U) - \eta'(\Tilde{U})\bigl)  \partial_{x_1} \si  \, dx\bigl)^2 \\&\quad - C(\epe^{1/2}+\epw^{1/2})
    \bigl(\int |U-\Tilde{U}| |\partial_{x_1} \si| \, dx\bigl)^2 \\
    &\geq C \bigl(\epss \int \mn_i \dkk \, dx\bigl)^2 -  C\sum_{j\neq i} \bigl(\epss \int \mn_j \dkk \, dx\bigl)^2 \\ &\quad-C (\epe^{1/2}+\epw^{1/2}) \,\epss^2 \int |U-\Tilde{U}|^2 \dkk\, dx \\
    &\geq  C \bigl(\epss \int \mn_i \dkk \, dx \bigl)^2 - C \sum_{j\neq i}\epss ^2 \int \mn_j ^2 \dkk \, dx \\
    &\quad- C (\epe^{1/2}+\epw^{1/2}) \, \epss^2 \int \mn_i ^2 \dkk \, dx \, ,
\end{align*}
where $\mn_j=\mu_j$ if $i=1$ and $\mn_j=\nu_j$ if $i=n$ for any $j\neq i$.

By (\ref{k_S as main part}), (\ref{L infinity}), and H\"{o}lder's inequality, we take $\epe, \epw$ small enough and get
\begin{equation*}
    \begin{aligned}
        &\dot{\x}_i\mathcal{Y}_i + \frac{\epss}{2\Tilde{C}_i}|\dot{\mathbf{X}}_i(t)| ^2 \\
        &= -\frac{\epss}{2\Tilde{C}_i}|\dot{\mathbf{X}}_i(t)| ^2 + \dot{\mathbf{X}}_i(t) \int \partial_{x_1} a_{S_i}^{\x_i} \, \eta({U|\Tilde{U}})dx\\
        &\leq - \frac{\epss}{2\Tilde{C}_i}|\dot{\mathbf{X}}_i(t)| ^2 + C\epe \Lambda_{S_i} \frac{\Tilde{C}_i}{\epss} \bigl(\int |U-\Tilde{U}| |\partial_{x_1} \si| \, dx\bigl)^2 \\
        &\leq - \frac{C}{\epss} \Tilde{C}_i \bigl(\epss \int \mn_i \dkk \, dx \bigl)^2 + C \Tilde{C}_i \sum_{j\neq i} \epss \int \mn_j ^2 \dkk \, dx  \\&\quad + C (\epe^{1/4}+\epw^{1/4}) \, \epss \int \mn_i ^2 \dkk \, dx \, .
    \end{aligned}
\end{equation*}
We take $\epe,\epw$ small enough and get the estimate.
\end{proof}
\subsection{$L^2$ estimates}\label{subsec L^2 contraction}
Now it is time to choose $\Lambda_{S_1},\Lambda_{W_n},\Tilde{C}_1, \Tilde{C}_n$ that work for both Lemma \ref{lemma rem} and Lemma \ref{lemma parabolic}.
\begin{lemm} \label{lemma L^2 estimate}
    For any $\ul \in \mathbb{R}^n$, there exist $\epw, \epe, C, \Lambda_{S_1}, \Lambda_{W_n}, \Tilde{C}_1, \Tilde{C}_n>0$ such that the following is true. 

    \noindent
    Assume Assumption \ref{assumption}. Then for any $t \in [0,T]$,
     \begin{align*}
        \begin{split}
            \|U(t,\cdot)-\utl(t,\cdot)\|_{L^2(\mathbb{R}\times {\mathbb{T}^{d-1}})} ^ 2
            + \int_0^t D_0(U) + G_{S_1}(U) + G_{W_n}(U) + Y \, ds 
            \\ \leq  C \|U_0-\utl_0\|_{L^2(\mathbb{R}\times {\mathbb{T}^{d-1}})} ^2 + C E^2 \, ,
        \end{split}
     \end{align*}
    where $D_0, G_{S_1}, G_{W_n}, Y, E$ are defined in \textup{(\ref{def of D, D1, G_S, G_R})}.
\end{lemm}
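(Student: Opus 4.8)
The plan is to feed the conclusion of Lemma~\ref{lemma rem} into the Poincar\'e-type estimate of Lemma~\ref{lemma parabolic}, once for each $i\in\mathcal{I}_{W_n}$ (i.e.\ for $i\in\{1,n\}$ when $W_n=S_n$ and only $i=1$ when $W_n=R_n$), and then to fix the still-free constants in the order $\de$, then $\Tilde{C}_1,\Tilde{C}_n$, then $\Lambda_{S_1},\Lambda_{W_n}$, and finally $\epw,\epe$, so that every positive leftover term is absorbed and one reaches a differential inequality of the form
\begin{equation*}
    \frac{d}{dt}\int a\,\eta(U|\utl)\,dx \le -c\bigl(D_0(U)+G_{S_1}(U)+G_{W_n}(U)+Y\bigr)+\mathcal{E}(U)
\end{equation*}
for some $c>0$. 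Integrating this on $[0,t]$, using $\int a\,\eta(U|\utl)\,dx\simeq\|U-\utl\|_{L^2}^2$ (strict convexity of $\eta$ together with $1\le a\le 3$ from (\ref{def of weight functions}) and (\ref{L infinity})), and bounding $\int_0^\infty\mathcal{E}(U)\,dt\le CE^2$, then yields the lemma.

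To assemble the inequality, start from Lemma~\ref{lemma rem} and recall $\mathcal{D}_{x_1}^p=D_{x_1}^{p,1}+D_{x_1}^{p,n}$. For each $i\in\mathcal{I}_{W_n}$, regroup the relevant pieces of $\mathcal{Z}(U)-\mathcal{D}(U)+\mathcal{H}(U)$ and write
\begin{equation*}
    (1+\de)\mathcal{H}_{S_i}-(1+2\gamma)D_{x_1}^{p,i}+\dot{\x}_i\mathcal{Y}_i=\bigl[\mathcal{H}_{S_i}-(1+\gamma)D_{x_1}^{p,i}+\dot{\x}_i\mathcal{Y}_i\bigr]+\de\mathcal{H}_{S_i}-\gamma D_{x_1}^{p,i},
\end{equation*}
where $\gamma$ is the fixed data-dependent constant of Lemma~\ref{lemma rem} (which may be taken in $(0,1)$), and bound the bracket by Lemma~\ref{lemma parabolic} applied with free parameter $\gamma$. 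When $W_n=R_n$ only $i=1$ occurs, $-(1+2\gamma)D_{x_1}^{p,n}$ is simply kept as an extra nonpositive term, and the term $-(1-\de)\mathcal{H}_{R_n}$ produced by Lemma~\ref{lemma rem} is already nonpositive. After these substitutions the right-hand side becomes a linear combination of: the manifestly nonpositive terms $-\gamma D_{x_1}^{p,i}$, $-C\gamma\int|\partial_{x_1}\mn_i|^2\,dx$, $-C\gamma\epss\int\mn_i^2\dkk\,dx$, $-\tfrac{\epss}{2\Tilde{C}_i}|\dot{\x}_i|^2$, $-C\mathcal{D}_{x_1}^r$, $-C\mathcal{D}_y$, and $-(1-\de)\mathcal{H}_{R_n}$; the small positive pieces $\de\mathcal{H}_{S_i}$; the quadratic pieces $\bigl(-\tfrac{\mathbf{C}_3}{\epss}\Tilde{C}_i+\tfrac{\mathbf{C}_4}{\epss}\bigr)\bigl(\epss\int\mn_i\dkk\,dx\bigr)^2$; the term $\bigl(-\mathbf{C}_1\min\{\Lambda_{S_1},\Lambda_{W_n}\}+\tfrac{\mathbf{C}_2}{\de}+\sum_{i\in\mathcal{I}_{W_n}}\mathbf{C}_5\Tilde{C}_i\bigr)\mathcal{H}_C$; the positive term $C\epss\mathcal{D}_y$; and $\mathcal{E}(U)$.

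Now fix the constants. First pick $\de\in(0,1)$ small (depending only on the data) so that $-\de\cfi-C\gamma<0$ for $i\in\mathcal{I}_{W_n}$; then $\de\mathcal{H}_{S_i}$ is absorbed, with room to spare, into $-C\gamma\epss\int\mn_i^2\dkk\,dx$. Next pick $\Tilde{C}_1,\Tilde{C}_n>\mathbf{C}_4/\mathbf{C}_3$ so that the quadratic pieces are nonpositive. Next pick $\Lambda_{S_1},\Lambda_{W_n}$ so large that $\mathbf{C}_1\min\{\Lambda_{S_1},\Lambda_{W_n}\}>\tfrac{\mathbf{C}_2}{\de}+\sum_i\mathbf{C}_5\Tilde{C}_i+1$, so the coefficient of $\mathcal{H}_C$ is at most $-1$. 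Finally pick $\epw$ (hence $\epss<\epw$) small enough that the $C\epss\mathcal{D}_y$ coming from Lemma~\ref{lemma parabolic} is dominated by half of the $C\mathcal{D}_y$ coming from Lemma~\ref{lemma rem}, and shrink $\epw,\epe$ further so that Lemmas~\ref{lemma rem} and~\ref{lemma parabolic} do apply with all these now-fixed constants. With these choices the right-hand side is at most a negative multiple of $\mathcal{D}_{x_1}^r+\sum_i\int|\partial_{x_1}\mn_i|^2\,dx+\mathcal{D}_y+\mathcal{H}_C+\mathcal{H}_{S_1}+\bigl(\mathcal{H}_{S_n}\text{ or }\mathcal{H}_{R_n}\bigr)+\sum_i\epss|\dot{\x}_i|^2$, plus $\mathcal{E}(U)$. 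Using the basis comparisons (\ref{decomposition of S_1 taylor}), (\ref{property of perturbation 1}) and their $W_n$-analogues, together with (\ref{k_S as main part})--(\ref{k_R as main part}) (so that $\epss\dkk\simeq|\partial_{x_1}\so|$, $\epww\dkww\simeq|\partial_{x_1}\wn|$, with the basis projections comparable to $U-\utl$), one checks that $\mathcal{H}_C+\mathcal{H}_{S_1}+(\mathcal{H}_{S_n}\text{ or }\mathcal{H}_{R_n})$ controls $G_{S_1}+G_{W_n}$, that $\mathcal{D}_{x_1}^r+\sum_i\int|\partial_{x_1}\mn_i|^2\,dx+\mathcal{D}_y$ controls $D_0$ (modulo an $O(\epw)$ correction absorbed into $G_{S_1}+G_{W_n}$), and that $\sum_i\epss|\dot{\x}_i|^2$ is $Y$; hence the announced differential inequality.

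It then remains to integrate on $[0,t]$: the initial term is $\le C\|U_0-\utl_0\|_{L^2}^2$, the time-$t$ term is $\ge c\|U(t)-\utl(t)\|_{L^2}^2$, and for the interaction term $\int_0^\infty\mathcal{E}\le C(\epws^2+\eps^2)\le CE^2$ when $W_n=S_n$ (the two exponentials being time-integrable), while $\int_0^\infty\mathcal{E}\le C\epe\epr+C\epe^{2/3}\epr^{1/3}\le C\epr^{1/3}=CE^2$ when $W_n=R_n$ by (\ref{rarefaction time bound}) and $\epr<1$; dividing by $c$ gives the claim. I expect the genuine difficulty to lie in the calibration of constants: one must keep enough of $\mathcal{D}_{x_1}^p$ surviving the Poincar\'e step to recover the full $x_1$-derivative in $D_0$ (the $\de$-versus-$\gamma$ balance), make the spectral-gap gain in the coefficient of $\mathcal{H}_C$ from large $\Lambda_{S_1},\Lambda_{W_n}$ beat the $\tfrac1\de$- and $\Tilde{C}_i$-sized positive contributions fed back by the hyperbolic ``scalarization'' and by Lemma~\ref{lemma parabolic}, and keep the extra transverse dissipation $C\epss\mathcal{D}_y$ from overwhelming $-C\mathcal{D}_y$ --- all with $\epw,\epe$ chosen last so that the hypotheses of the two earlier lemmas remain valid for the final constants.
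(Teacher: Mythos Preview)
Your proposal is correct and follows essentially the same approach as the paper: combine Lemma~\ref{lemma rem} with Lemma~\ref{lemma parabolic} applied for each $i\in\mathcal{I}_{W_n}$, then fix $\de$, $\Tilde{C}_i$, $\Lambda_{S_1}=\Lambda_{W_n}$, and finally $\epw,\epe$ in that order so that all positive leftovers are absorbed, and integrate in time using (\ref{rarefaction time bound}) and Lemma~\ref{interaction bound}. The only cosmetic difference is that the paper handles the factor $(1+\de)$ in front of $\mathcal{H}_{S_i}$ by requiring $(1+\de)(1+\gamma)\le 1+\tfrac{3}{2}\gamma$ and multiplying Lemma~\ref{lemma parabolic} through by $(1+\de)$, whereas you split $(1+\de)\mathcal{H}_{S_i}=\mathcal{H}_{S_i}+\de\mathcal{H}_{S_i}$ and absorb the $\de\mathcal{H}_{S_i}$ piece into the $-C\gamma\epss\int\mn_i^2\,\partial_{x_1}k_{S_i}^{\x_i}\,dx$ term coming out of the Poincar\'e step; the two bookkeepings are equivalent.
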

\begin{proof}
We fix $\gamma$ as in Lemma \ref{lemma rem}. We choose $\de>0$ small enough such that
\begin{align*}
    \de<1 \, \text{ and } \,(1+\de)(1+\gamma)\leq 1+\frac{3}{2}\gamma \, .
\end{align*}
We take
\begin{align*}
    \Tilde{C}_i = \frac{2\mathbf{C}_4}{\mathbf{C}_3} \, .
\end{align*}
We take  $\Lambda_{S_1}, \Lambda_{W_n} = \Lambda$ where
\begin{align*}
    -\mathbf{C}_1 \Lambda + \frac{\mathbf{C}_2}{\de} + (1+\de)\mathbf{C}_5\Tilde{C}_i =  -1 \, .
\end{align*}
Finally, we take $\epe, \epw$ small enough such that Lemma \ref{lemma rem}, Lemma \ref{lemma parabolic}, and (\ref{rarefaction time bound}) give the estimate.
\end{proof}

\section{Proof of Prop \ref{prop energy estimate}}\label{H^m contraction}
\subsection{Some estimates}
Before proving the $H^m$ estimates, we bound the higher-order terms evaluated in Lemma \ref{lemm reorganization}.

Recall $m$ defined in (\ref{value of m}). Let $\phi: \mathbb{R}\times {\mathbb{T}^{d-1}} \rightarrow \mathbb{R}^n$ be a function. The Gagliardo-Nirenberg inequality for functions defined on $\mathbb{R}^d$ is proved in \cite{Ninequality}. Together with the periodicity in the transverse direction, we get the following two inequalities. If $1\leq q \leq m-1$, then for any $2\leq p < \frac{2d}{d-2q}$, there exists $C>0$ such that 
\begin{align}\label{gn}
    \|\phi\|_{L^p(\mathbb{R}\times {\mathbb{T}^{d-1}})} \leq C \|\phi\|_{H^q(\mathbb{R}\times {\mathbb{T}^{d-1}})} \, .
\end{align}
If $q = m$, then for any $2\leq p< \infty$, there exists $C>0$ such that 
\begin{align}\label{gn extreme}
    \|\phi\|_{L^p(\mathbb{R}\times {\mathbb{T}^{d-1}})} \leq C \|\phi\|_{H^q(\mathbb{R}\times {\mathbb{T}^{d-1}})} \, .
\end{align}
\begin{lemm}\label{lemm estimate in H^m contraction}
    Assume Assumption \ref{assumption}. Let $1\leq k\leq m$ where $m$ is defined in (\ref{value of m}).
    Then there exists a constant $C>0$ such that for any $t \in [0,T]$ and any $L\in \mathcal{L}^k$ where $\mathcal{L}^k$ is defined in \textup{(\ref{def of L^k})}, 
    \begin{align*}
        \|L\|_{L^2(\mathbb{R}\times {\mathbb{T}^{d-1}} )}^2 \leq C \epe D_k + C\sum_{j=0}^{k-1} D_j \, ,
    \end{align*}
    where $D_j$ are defined in \textup{(\ref{def of D, D1, G_S, G_R})}.
\end{lemm}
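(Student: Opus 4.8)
The plan is to take an arbitrary $L = \prod_{j=1}^l |\partial_x^{\beta_j}\psi| \in \mathcal{L}^k$, so $1 \le |\beta_j| \le k$ and $\sum_{j=1}^l |\beta_j| \le k+1$, and estimate $\|L\|_{L^2}^2 = \big\|\prod_{j=1}^l |\partial_x^{\beta_j}\psi|\big\|_{L^4}^4$ (keeping in mind $\|L\|_{L^2}^2 = \|\sqrt{L}\|_{L^4}^4$ is just bookkeeping; concretely I will bound $\int \prod_j |\partial_x^{\beta_j}\psi|^2 \, dx$). I would distinguish the case $l=1$ from $l\ge 2$. When $l=1$ we have $L = |\partial_x^{\beta_1}\psi|$ with $|\beta_1| \le k+1$, so $\|L\|_{L^2}^2 = \|\partial_x^{\beta_1}\psi\|_{L^2}^2 \le D_{|\beta_1|-1}(U) \le \sum_{j=0}^k D_j(U)$ directly from the definition (\ref{def of D, D1, G_S, G_R}); if in addition $|\beta_1| \le k$ this is even $\le \sum_{j=0}^{k-1} D_j$, and if $|\beta_1| = k+1$ it equals a piece of $D_k$, which is fine since we are allowed the term $C\epe D_k$ — actually we get it for free without the $\epe$, which only helps.

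**The genuinely multi-factor case.** The main work is $l \ge 2$. Here I would apply the generalized Hölder inequality: writing $\int \prod_{j=1}^l |\partial_x^{\beta_j}\psi|^2 \, dx \le \prod_{j=1}^l \|\partial_x^{\beta_j}\psi\|_{L^{p_j}}^2$ with $\sum_j 1/p_j = 1$, $p_j \ge 2$. The exponents must be chosen so that each $\|\partial_x^{\beta_j}\psi\|_{L^{p_j}}$ can be controlled by Sobolev embedding (\ref{gn}) or (\ref{gn extreme}). Since $\sum_j |\beta_j| \le k+1$ with $l\ge 2$, each $|\beta_j| \le k+1 - (l-1)\cdot 1 \le k$, and in fact the constraint $\sum |\beta_j| \le k+1$ together with $m \ge \lceil (d+1)/2 \rceil$ (so $2m > d$) is exactly what makes the product embedding work: one checks that $\partial_x^{\beta_j}\psi$ has $m - |\beta_j|$ derivatives of regularity to spare from $\psi \in H^{m+1}$-type control (more precisely $\partial_x^{\beta_j}\psi$ lies in $H^{s_j}$ with $s_j$ chosen so $\sum (\text{number of "missing" derivatives})$ balances). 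Concretely I expect to choose $p_j$ so that at most one factor — say $j=1$, the one with largest $|\beta_j|$ — uses close to top-order regularity and contributes the factor bounded by $\sqrt{D_k}$ or $\sqrt{D_j}$, while all remaining factors are bounded in $L^\infty$ or high $L^p$ by $\|\psi\|_{H^m} \le C\epe$ via (\ref{L infinity bound}), (\ref{L infinity}), and (\ref{gn})/(\ref{gn extreme}). This is where the smallness factor $\epe$ or the lower-order $D_j$'s enter: each "subordinate" factor pays either an $\epe$ or gets absorbed into a strictly-lower-order $D_j$.

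**Bookkeeping the derivative budget.** To organize the exponent choice cleanly I would argue as follows. Order the multi-indices so $|\beta_1| \ge |\beta_2| \ge \dots \ge |\beta_l|$. The "expensive" factor is $|\partial_x^{\beta_1}\psi|$. For it, if $|\beta_1| = k$ or $k+1$, I bound $\|\partial_x^{\beta_1}\psi\|_{L^2}^2$ by $D_{k-1} + D_k$ and put it in $L^2$ (so $p_1 = 2$), which forces all other factors into $L^\infty$; since $l \ge 2$ and $\sum|\beta_j| \le k+1$, we get $|\beta_1| + |\beta_2| \le k+1$ hence $|\beta_2| \le k+1 - |\beta_1| \le 1$ when $|\beta_1| = k \ge$ ... — here I must be careful: it can happen e.g. $k$ large, $|\beta_1| = 2, l$ large, all $|\beta_j|$ small, in which case none is near top order and everything is controlled by $\|\psi\|_{H^q}$ for modest $q < m$, i.e. all factors in appropriate $L^{p_j}$ via (\ref{gn}). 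In every sub-case the point is that $\sum_{j} |\beta_j| \le k+1$ and the Sobolev exponents available from $H^m$ (with $2m > d$, giving $H^{m-1}\hookrightarrow L^p$ for all finite $p$ up to a threshold, and $H^m \hookrightarrow L^p$ for all finite $p$) suffice to close the product estimate with a total derivative count of at most $k+1$, pinning the result to $D_k + \sum_{j<k} D_j$. The extra $\epe$ appears because at least one factor among the $l\ge 2$ factors carries only $L^\infty(H^m)$ control of size $\le \epe$ rather than an integrated $D_j$.

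**Main obstacle.** I anticipate the fiddly part is the exponent-counting: verifying uniformly over all admissible $(\beta_1,\dots,\beta_l)$ with $l \ge 2$, $\sum|\beta_j|\le k+1$, $1\le|\beta_j|\le k$, and all $k \le m$, that one can pick $p_j$ with $\sum 1/p_j = 1$, $p_j\ge 2$, and $p_j$ inside the allowed Sobolev range dictated by the number of derivatives on $\partial_x^{\beta_j}\psi$ and the dimension $d$. The clean way to do this is a counting lemma: assign each factor $\partial_x^{\beta_j}\psi$ a "fractional smoothness surplus" $s_j := m+1-|\beta_j| \ge m+1-k \ge 1$ coming from treating $\psi$ as having one derivative of $H^m$-plus regularity via the dissipation-controlled $D_k$ (top order) and $H^m$ control (lower order); since $\sum_j (|\beta_j|) \le k+1 \le m+1$, the total order used never exceeds the available regularity, and $2(m+1) > d$ handles the dimensional constraint in Gagliardo–Nirenberg. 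Once that counting is in place the rest is a routine chain of Hölder plus (\ref{gn}), (\ref{gn extreme}), (\ref{L infinity}), absorbing all but one factor's worth of norm into $\epe$ or lower $D_j$'s, and taking $\epe$ small — but the verification that the counting is tight, rather than off by one in some edge case (e.g. $d$ even, $k = m$, $l = 2$, $|\beta_1| = |\beta_2| = \lceil (k+1)/2\rceil$), is the step I would be most careful about.
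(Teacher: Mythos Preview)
Your approach is essentially the same as the paper's: treat $l=1$ directly, and for $l\ge 2$ use H\"older plus Gagliardo--Nirenberg, with one factor bounded by $\|\psi\|_{H^{k+1}}$ (feeding into $\sum_{j=0}^k D_j$) and the remaining $l-1$ factors bounded by $\|\psi\|_{H^m}\le C\epe$. A couple of points of cleanup and comparison:

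\textbf{The $l=1$ case.} You allow $|\beta_1|=k+1$, but the definition of $\mathcal{L}^k$ requires $|\beta_j|\le k$ for every factor. So when $l=1$ you immediately get $\|L\|_{L^2}^2\le \sum_{j=0}^{k-1}D_j$ without touching $D_k$; your worry about a stray $D_k$ without an $\epe$ does not arise.

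\textbf{The $L^2\times L^\infty$ split does not work.} Your first instinct for $l\ge 2$ --- put the highest-order factor in $L^2$ and all others in $L^\infty$ --- fails in general: for $d\ge 3$ and $|\beta_j|=1$, the subordinate factor $\partial_x^{\beta_j}\psi$ lives in $H^{m-1}$, and $m-1<d/2$ does not give $L^\infty$. You notice this and retreat to a more general H\"older/counting scheme, which is correct.

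\textbf{How the paper does the exponent count.} The paper orders $|\beta_1|\le\dots\le|\beta_l|$ (smallest first), bounds the first $l-1$ factors by $\|\psi\|_{H^m}\le C\epe$ in $L^{p_j}$ with $p_j<\frac{2d}{d-2m+2|\beta_j|}$, and the last factor by $\|\psi\|_{H^{k+1}}$ in $L^{p_l}$. The ``fiddly'' check that $\sum 1/p_j<1/2$ with all $p_j\ge 2$ splits into three cases: (i) $k+1-|\beta_l|\le m-1$ (generic); (ii) $k+1-|\beta_l|=m$ and $d\ge 2$, which forces $k=m$ and all $|\beta_j|=1$, handled via the critical embedding $H^m\hookrightarrow L^p$ for all finite $p$; (iii) $k+1-|\beta_l|=m$ and $d=1$, which forces $k=m=1$, $l=2$, handled by the one-dimensional $L^\infty$ interpolation. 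This is precisely the edge-case analysis you flagged as the main obstacle; once you write it out, the proof is complete.
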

\begin{proof}
If $l=1$, then 
\begin{align*}
     \|L\|_{L^2}^2  \leq C \sum_{j=0}^{k-1} D_j\, .
\end{align*}
Assume $l\geq 2$. Without loss of generality, assume $|\beta_1|\leq ...\leq |\beta_l|$. For any $1\leq j\leq l-1$, (\ref{gn}) and (\ref{L infinity bound}) give
\begin{align}\label{lemma h^m estimate inequality}
    \| \partial_x^{\beta_j}\psi\|_{L^{p}} \leq C \|\psi\|_{H^{m}} \leq C \epe <1 \, ,
\end{align}
for any $2\leq p<\frac{2d}{d-2m + 2|\beta_j|}:=p_j$. 
We will consider 3 cases.

\noindent
\textit{1) case $k+1-|\beta_l|\leq m-1$}. 
By (\ref{gn}), we have
\begin{align*}
    \| \partial_x^{\beta_l}\psi\|_{L^{p}} \leq C \|\psi\|_{H^{k+1}} \, ,
\end{align*}
for any $2\leq p<\frac{2d}{d-2k-2 + 2|\beta_l|}:=p_l$.

Since
\begin{align*}
    \sum_{j=1}^l \frac{1}{p_j} \leq \frac{1}{2} + \frac{(d-2m)(l-1)}{2d}<\frac{1}{2} \; \text{ and } \;
    \frac{l}{2} \geq \frac{1}{2} \, ,
\end{align*}
there exist $p_1^*,...,p_l^*$ such that
\begin{align*}
    2\leq p_j^* < p_j \; \text{ and } \; \sum_{j=1}^l \frac{1}{p_j^*} = \frac{1}{2} \, .
\end{align*}
Then H\"{o}lder's inequality and (\ref{lemma h^m estimate inequality}) give
\begin{align*}
    \|L\|_{L^2}^2 \leq C \Pi_{j=1}^l \|\partial_x^{\beta_j}\psi\|_{L^{p_j^*}}^2 \leq (C \epe)^{2(l-1)} \|\psi\|_{H^{k+1}}^2 \leq C \epe \sum_{j=0}^k D_j \, .
\end{align*}

\noindent
\textit{2) case $k+1-|\beta_l|=m$ and $d\geq 2$}. Then $k=m$ and $|\beta_j|=1$ for any $1\leq j\leq l$. By (\ref{gn extreme}), we have
\begin{align*}
    \| \partial_x^{\beta_l}\psi\|_{L^{p}} \leq C \|\psi\|_{H^{m+1}} \, ,
\end{align*}
for any $2\leq p< \infty:=p_l$.
Since $1/p_l = 0$ and $l-1\leq m$,
\begin{align*}
    \sum_{j=1}^l \frac{1}{p_j} \leq \frac{(d-2m+2)m}{2d} \leq \frac{d+1}{4d}<\frac{1}{2} \; \text{ and } \; \frac{l}{2} \geq \frac{1}{2} \, .
\end{align*}
Hence, there exist $p_1^*,...,p_l^*$ such that
\begin{align*}
    2\leq p_j^* < p_j \; \text{ and } \; \sum_{j=1}^l \frac{1}{p_j^*} = \frac{1}{2} \, .
\end{align*}
Then H\"{o}lder's inequality and (\ref{lemma h^m estimate inequality}) give
\begin{align*}
    \|L\|_{L^2}^2\leq C \Pi_{j=1}^l \|\partial_x^{\beta_j}\psi\|_{L^{p_j^*}}^2 \leq (C \epe)^{2(l-1)} \|\psi\|_{H^{m+1}}^2 \leq C \epe \sum_{j=0}^{m} D_j \, .
\end{align*}

\noindent
\textit{3) case $k+1-|\beta_l|=m$ and $d = 1$}. Then $k=m=1$, $l=2$, and $|\beta_1|=|\beta_2|=1$. H\"{o}lder's inequality, Gagliardo-Nirenberg inequality and (\ref{L infinity bound}) give
\begin{align*}
    \|L\|_{L^2}^2 & \leq C \|\partial_x \psi\|_{L^2}^2 \|\partial_x \psi\|_{L^\infty}^2   \leq C \epe \|\psi\|_{H^2}^2\leq C \epe (D_1+D_0)\, .
\end{align*}

\end{proof}

\subsection{$H^m$ estimates}
We prove the $H^m$ estimates by induction.
\begin{lemm} \label{lemma H^m estimate}
    For any $\ul \in \mathbb{R}^n$, there exist $\epw, \epe, C, \Lambda_{S_1}, \Lambda_{W_n}, \Tilde{C}_1, \Tilde{C}_n>0$ such that the following is true. 

    \noindent
    Assume Assumption \ref{assumption}. Then for any $t \in [0,T]$,
    \begin{equation}\label{h^m contraction ineq 1}
        \begin{aligned}
            &\|U(t,\cdot)-\utl(t,\cdot)\|_{H^m(\mathbb{R}\times {\mathbb{T}^{d-1}})} ^ 2
            + \int_0^t \Bigl(\sum_{k=0}^m D_k(U) + G_{S_1}(U) + G_{W_n}(U) + {Y}\Bigl) \,ds 
            \\ &\leq  C \|U_0-\utl_0\|_{H^m(\mathbb{R}\times {\mathbb{T}^{d-1}})} ^2 + C E^2 \, ,
        \end{aligned}
    \end{equation}
    and for any $t \in [0,T]$ and any $\beta \in \mathbb{N}^d$ such that $1\leq |\beta|\leq m$,
    \begin{equation}\label{h^m contraction ineq 2}
        \begin{aligned}
            &\int_0^t \Big|\frac{d}{dt} \int \big|\partial_x^\beta (U-\utl)\big|^2 \, dx\Big| \, ds
            \\&\leq C \int_0^t \Bigl(\sum_{k=0}^m D_k(U) + G_{S_1}(U) + G_{W_n}(U) + Y \Bigl) \, ds + C E^2\, ,
        \end{aligned}
    \end{equation}
    where $m$ is defined in (\ref{value of m}) and $D_k, G_{S_1}, G_{W_n}, Y, E$ are defined in \textup{(\ref{def of D, D1, G_S, G_R})}.
\end{lemm}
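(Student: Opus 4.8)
\textbf{Proof proposal for Lemma~\ref{lemma H^m estimate}.}
The plan is to establish (\ref{h^m contraction ineq 1}) and (\ref{h^m contraction ineq 2}) together by induction on the differentiation order $k$, from $k=0$ up to $k=m$. The base case $k=0$ is Lemma~\ref{lemma L^2 estimate}, which in particular already bounds $\int_0^t\bigl(D_0(U)+G_{S_1}(U)+G_{W_n}(U)+Y\bigr)\,ds$. At step $k\ge 1$ I assume as inductive hypothesis the analogue of (\ref{h^m contraction ineq 1}) with $m$ replaced by $k-1$, i.e.
\begin{align*}
&\sup_{s\in[0,t]}\|U(s,\cdot)-\utl(s,\cdot)\|_{H^{k-1}}^2 + \int_0^t\Bigl(\sum_{j=0}^{k-1}D_j(U)+G_{S_1}(U)+G_{W_n}(U)+Y\Bigr)\,ds \\
&\qquad\le C\|U_0-\utl_0\|_{H^{k-1}}^2 + CE^2,
\end{align*}
and upgrade it to the same inequality with $k-1$ replaced by $k$; for $k=m$ this is exactly (\ref{h^m contraction ineq 1}).

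For the inductive step I take any $\alpha\in\mathbb{N}^d$ with $|\alpha|=k$, apply $\partial_x^\alpha$ to (\ref{eq u and u tilde}), and pair the result in $L^2(\mathbb{R}\times\mathbb{T}^{d-1})$ with $\eta''(\ul)\partial_x^\alpha(U-\utl)$, the constant symmetric positive definite matrix $\eta''(\ul)$ being chosen so that the top-order hyperbolic piece can be symmetrized. The left side produces $\frac12\frac{d}{dt}\int\eta''(\ul)|\partial_x^\alpha(U-\utl)|^2\,dx$, whose time-integral is comparable to $\|\partial_x^\alpha(U-\utl)\|_{L^2}^2$. Integrating by parts and using positive definiteness of the $B_j$, the parabolic part yields $-cD_k(U)$ plus remainders; by the first two inequalities of Lemma~\ref{lemm reorganization} those remainders are controlled by $\sum_{L\in\mathcal{L}^k}L$ and by $|U-\utl|\,|\partial_{x_1}\utl|$, and Lemma~\ref{lemm estimate in H^m contraction} gives $\|L\|_{L^2}^2\le C\epe D_k + C\sum_{j<k}D_j$, while, using $|\partial_{x_1}\utl|\le C(|\partial_{x_1}\so|+|\partial_{x_1}\wn|)$ with Lemma~\ref{pointwise bound of shock} and Lemma~\ref{property of rarefaction}, the term $\int|U-\utl|^2|\partial_{x_1}\utl|^2\,dx$ is absorbed into $G_{S_1}+G_{W_n}$ and lower-order $D_j$'s after Cauchy-Schwarz and Young. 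For the hyperbolic flux, Lemma~\ref{lemm reorganization} peels off the genuinely top-order term $\partial_{x_1}\bigl(f'(\utl)\partial_x^\alpha(U-\utl)\bigr)$; writing $f'(\utl)=f'(\ul)+\bigl(f'(\utl)-f'(\ul)\bigr)$, the $f'(\ul)$-contribution integrates to zero by symmetry of $\eta''(\ul)f'(\ul)$ (see (\ref{trick eta'' f'})), and the $\bigl(f'(\utl)-f'(\ul)\bigr)$-contribution is $O(\epw)$, hence absorbed by $cD_k$ for $\epw$ small together with $D_{k-1}$. Finally, $\partial_x^\alpha Z$, $\partial_x^\alpha E_1$ and $\partial_x^\alpha E_2$ only involve pure $x_1$-derivatives of $\so,\st,R_n$, which Lemma~\ref{pointwise bound of shock}, Lemma~\ref{property of rarefaction} and the last two inequalities of Lemma~\ref{lemm reorganization} reduce to $|\partial_{x_1}\so|,|\partial_{x_1}\st|,|\partial_{x_1}R_n|$ and their squares; Lemma~\ref{interaction bound} then bounds the $t$-integral of these contributions by $C\int_0^t\bigl(G_{S_1}+G_{W_n}+Y+\sum_{j<k}D_j\bigr)\,ds + CE^2$, and in the rarefaction case the $\partial_{x_1}^{2}R_n$-type terms are treated by the Gagliardo-Nirenberg and Young arguments already used in the proof of Lemma~\ref{lemma rem}, together with the time-integrability (\ref{rarefaction time bound}).

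Collecting all contributions, summing over $|\alpha|=k$, integrating in $t$, and absorbing $C\epe D_k$ into the coefficient of $D_k$ (by taking $\epe$ small) yields
\begin{align*}
&\sup_{s\in[0,t]}\|U(s,\cdot)-\utl(s,\cdot)\|_{H^k}^2 + \int_0^t D_k(U)\,ds \\
&\qquad\le C\|U_0-\utl_0\|_{H^k}^2 + CE^2 + C\int_0^t\Bigl(\sum_{j<k}D_j + G_{S_1}+G_{W_n}+Y\Bigr)\,ds,
\end{align*}
and the inductive hypothesis bounds the last integral, closing the induction and giving (\ref{h^m contraction ineq 1}) at $k=m$. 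For (\ref{h^m contraction ineq 2}), for each $\beta$ with $1\le|\beta|\le m$ I use instead the unweighted identity $\frac{d}{dt}\int|\partial_x^\beta(U-\utl)|^2\,dx = 2\int\partial_x^\beta(U-\utl)\cdot\partial_t\partial_x^\beta(U-\utl)\,dx$, substitute $\partial_t\partial_x^\beta(U-\utl)$ from $\partial_x^\beta$ of (\ref{eq u and u tilde}), and take absolute values; by exactly the estimates above, every resulting term is pointwise in $t$ bounded by $C\bigl(\sum_{k=0}^m D_k + G_{S_1}+G_{W_n}+Y\bigr)$ plus an $L^1_t$ interaction term of size $\lesssim E^2$, so integrating in $t$ and invoking (\ref{h^m contraction ineq 1}) gives (\ref{h^m contraction ineq 2}).

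I expect the main obstacle to be the top-order hyperbolic flux term: at the $L^2$ level its sign is secured only through the weight $a$ and the hyperbolic ``scalarization'' of Lemma~\ref{lemma rem}, and no analogous device is available at orders $k\ge1$, so one must check carefully that after the commutator reorganization of Lemma~\ref{lemm reorganization} the genuinely top-order contribution reduces---via the symmetry of $\eta''f'$ and the smallness of $\eps,\epww,\epr$---to a quantity absorbable by $D_k(U)$. A close second is the bookkeeping of the higher-order error and interaction terms, which must each be shown to land in $\mathcal{L}^k$ (hence controlled by Lemma~\ref{lemm estimate in H^m contraction}), in $G_{S_1}+G_{W_n}$, or in a time-integrable quantity of size $E^2$.
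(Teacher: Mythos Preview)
Your inductive scheme and your use of Lemmas~\ref{lemm reorganization} and~\ref{lemm estimate in H^m contraction} match the paper exactly, and the handling of $Z,E_1,E_2$ via Lemma~\ref{interaction bound} and (\ref{rarefaction time bound}) is also the same. The one genuine difference is how the top-order hyperbolic flux is neutralized. You pair with $\eta''(\ul)\partial_x^\alpha\psi$ and rely on the symmetry of $\eta''(\ul)f'(\ul)$ to kill the principal part; the paper instead pairs with the plain $\partial_x^{\alpha}\psi$ (no symmetrizer), moves the outer $\partial_{x_1}$ onto the test function by integration by parts, and applies Young's inequality with a small parameter $\tau$:
\[
\int \partial_x^{\alpha}\bigl(f(U)-f(\utl)\bigr)\cdot\partial_x^{\alpha+e_1}\psi\,dx \;\le\; \tau D_k + \frac{C}{\tau}\big\|\partial_x^{\alpha}\bigl(f(U)-f(\utl)\bigr)\big\|_{L^2}^2 .
\]
Since $\partial_x^{\alpha}(f(U)-f(\utl))$ carries only $k$ derivatives, the second inequality of Lemma~\ref{lemm reorganization} together with Lemma~\ref{lemm estimate in H^m contraction} bounds the last norm by $C\epe D_k + C\sum_{j<k}D_j + C(G_{S_1}+G_{W_n})$, and the residual $\tfrac{C\epe}{\tau}D_k$ is absorbed by first fixing $\tau$ and then taking $\epe,\epw$ small. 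In other words, at orders $k\ge1$ the flux is genuinely subcritical relative to the dissipation, so the ``main obstacle'' you flag does not require any of the hyperbolic scalarization machinery from Lemma~\ref{lemma rem}; a single integration by parts suffices. Your symmetrizer route is also valid, but the paper's version is shorter and avoids carrying $\eta''(\ul)$ through the parabolic estimate. (A minor aside: the reference you give for the symmetry, (\ref{trick eta'' f'}), is the eigenvector orthogonality; the symmetry of $\eta''f'$ is the entropy compatibility condition implicit in (\ref{def of entropy flux}).)
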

\begin{proof}
We will prove the lemma by induction. Recall the $L^2$ estimates shown in Lemma \ref{lemma L^2 estimate}. Let $1\leq k\leq m$.
We assume the inequality is true for $k-1$, i.e., 
\begin{align*}
    \begin{split}
        &\|U(t,\cdot)-\utl(t,\cdot)\|_{H^{k-1}(\mathbb{R}\times {\mathbb{T}^{d-1}})} ^ 2
        + \int_0^t \Bigl(\sum_{j=0}^{k-1} D_j(U) + G_{S_1}(U) + G_{W_n}(U) + Y \Bigl) \, ds 
        \\ &\leq  C \|U_0-\utl_0\|_{H^{k-1}(\mathbb{R}\times {\mathbb{T}^{d-1}})} ^2 + C E^2 \, .
    \end{split}
\end{align*}
We want to show the inequality is true for $k$, i.e.,
\begin{align}\label{h^m contraction ineq 3}
    \begin{split}
        &\|U(t,\cdot)-\utl(t,\cdot)\|_{H^k(\mathbb{R}\times {\mathbb{T}^{d-1}})} ^ 2
        + \int_0^t \Bigl (\sum_{j=0}^k D_j(U) + G_{S_1}(U) + G_{W_n}(U) + Y \Bigl) \, ds 
        \\ &\leq  C \|U_0-\utl_0\|_{H^k(\mathbb{R}\times {\mathbb{T}^{d-1}})} ^2 + C E^2 \, .
    \end{split}
\end{align}
Note the constants $C>0$ in this proof depend on $B_j,f,\eta,\ul,k$, but the dependency on $k$ does not matter as $m$ is fixed and finite.

Recall the definition of $\psi$ in (\ref{notataion of perturbation}). Let $\alpha_k \in \mathbb{N}^d$ be such that $|\alpha_k|=k$.
We take $\partial_x^{\alpha_k}$ on both sides of (\ref{eq u and u tilde}) and get
\begin{align*}
    &\partial_t \partial_x^{\alpha_k} \psi + \partial_{x}^{{\alpha_k}} \partial_{x_1} \bigl(f(U)-f(\utl)\bigl) + \sum_{j=2}^{d}\partial_{x}^{{\alpha_k}} \partial_{x_j} \bigl(g_j(U)-g_j(\utl)\bigl) \\
    &= \sum_{j=1}^d \partial_{x}^{\alpha_k} \partial_{x_j} \bigl(B_j(U)\partial_{x_j} \eta'  (U) - B_j(\utl)\partial_{x_j} \eta'  (\utl)\bigl)  - \partial_{x}^{\alpha_k}Z - \partial_{x}^{\alpha_k} E_1 - \partial_x^{\alpha_k} E_2 \, .
\end{align*}
Then we multiply both sides by $\partial_x^{\alpha_k}\psi$ and take integration w.r.t. $x$. Recall (\ref{notation of alpha}). By integration by parts, we have
\begin{equation*}
    \begin{aligned}
        &\frac{d}{dt} \int \frac{|\partial_x^{\alpha_k}\psi|^2 }{2} \, dx +\sum_{j=1}^d \int \partial_{x}^{\alpha_k} \bigl(B_j(U)\eta''(U)  \partial_{x_j} U - B_j(\utl) \eta''  (\utl)\partial_{x_j} \utl\bigl) \,  \partial_x^{\alpha_k+e_j}\psi \, dx
        \\ &= \int \partial_x^{\alpha_k} \bigl(f(U)-f(\utl)\bigl) \,\partial_x^{\alpha_k+e_1}\psi \,dx + \sum_{j=2}^d \int \partial_x^{\alpha_k} \bigl(g_j(U)-g_j(\utl)\bigl) \,\partial_x^{\alpha_k+e_j}\psi \, dx
        \\ & \quad -  \int \partial_x^{\alpha_k} Z \,\partial_x^{\alpha_k}\psi \, dx
        -\int \partial_{x}^{\alpha_k} E_1 \,  \partial_{x}^{\alpha_k}\psi \,dx -\int \partial_{x}^{\alpha_k} E_2 \, \partial_{x}^{\alpha_k}\psi \,dx\, ,
    \end{aligned}
\end{equation*}
where $e_j \in \mathbb{N}^d$ is the multi-index whose j-th component is $1$ and all other components are 0.

Let $0< \tau< 1$ to be chosen later in (\ref{choice of gamma}). Recall (\ref{notation of alpha}).
By Young's inequality and integration by parts, we have
\begin{align*}
    \frac{d}{dt} \int \frac{|\partial_x^{\alpha_k}\psi|^2 }{2} \, dx + I_1
    \leq \tau D_k + C D_{k-1} + \frac{C}{\tau}(I_2+I_3+I_4+I_5) \, ,
\end{align*}
where
\begin{equation*}
    \begin{aligned}
        I_1&= \sum_{j=1}^d \int \partial_{x}^{\alpha_k} \bigl(B_j(U)\eta''(U)  \partial_{x_j} U - B_j(\utl) \eta''  (\utl)\partial_{x_j} \utl\bigl) \, \partial_x^{\alpha_k+e_j}\psi \, dx\, , \\
        I_2&= \int \big|\partial_x^{\alpha_k} \bigl(f(U)-f(\utl)\bigl)\big|^2 \, dx + \sum_{j=2}^d \int \big|\partial_x^{\alpha_k} \bigl(g_j(U)-g_j(\utl)\bigl)\big|^2\, dx \, , \\
        I_3&= Y \, ,\\
        I_4&=\int |\partial_{x}^{\alpha_k} E_1|^2 \, dx \, ,\\
        I_5&= 
        \begin{cases}
            \int |\partial_{x}^{\alpha_k-e_1} E_2|^2 \, dx\, , \; \textup{if } (\alpha_k)_1 \textup{ is odd and } W_n = R_n\,, \\
            \int |\partial_{x}^{\alpha_k} E_2|^2  \, dx\, , \; \textup{otherwise.} 
        \end{cases}
    \end{aligned}
\end{equation*}
Lemma \ref{lemm reorganization} and Lemma \ref{lemm estimate in H^m contraction} give
\begin{equation*}
    \begin{aligned}
        |I_1 - \Tilde{I}_1| 
        &\leq (\tau + \frac{C}{\tau}\epe) D_k + \frac{C}{\tau} (G_{S_1} + G_{W_n} +\sum_{j=0}^{k-1} D_j) \, , \\
        I_2 &\leq C \epe D_k + C (G_{S_1}+G_{W_n} + \sum_{j=0}^{k-1} D_j) \, , 
    \end{aligned}
\end{equation*}
where
\begin{align*}
    \Tilde{I}_1 = \sum_{j=1}^d \int \bigl(B_j(U)\eta''(U)\partial_x^{\alpha_k+e_j}U -B_j(\utl)\eta''(\utl)\partial_x^{\alpha_k+e_j}\utl\bigl)\,  \partial_x^{\alpha_k+e_j}\psi \,dx\, .
\end{align*}
We first evaluate $\Tilde{I}_1$. We have
\begin{equation*}
    \Tilde{I}_1 = \Tilde{D}_k + \Tilde{I}_{11} +  \Tilde{I}_{12} \, ,
\end{equation*}
where
\begin{align*}
    \Tilde{D}_k &= \sum_{j=1}^d\int  B_j(\ul)\eta''(\ul) \partial_x^{\alpha_k+e_j}\psi \, \partial_x^{\alpha_k+e_j}\psi \,dx\, , \\
    \Tilde{I}_{11} &= \sum_{j=1}^d\int \bigl(B_j(U)\eta''(U)- B_j(\ul)\eta''(\ul)\bigl) \partial_x^{\alpha_k+e_j}\psi \,\partial_x^{\alpha_k+e_j}\psi \,dx\, ,\\
    \Tilde{I}_{12}  &= \sum_{j=1}^d\int \bigl(B_j(U)\eta''(U) - B_j(\utl)\eta''(\utl)\bigl)\partial_x^{\alpha_k+e_j}\utl \,\partial_x^{\alpha_k+e_j} \psi\,dx \, .
\end{align*}
As $B_j$ are positive definite and $\eta''$ is strictly convex,
\begin{align*}
    \Tilde{D}_k \geq C \sum_{j=1}^d \|\partial_x^{\alpha_k+e_j}\psi\|_{L^2}^2 \, .
\end{align*}
By (\ref{L infinity}) and Young's inequality, we get
\begin{align*}
    |\Tilde{I}_{11}| + |\Tilde{I}_{12}| \leq C (\epe+\epw) D_k + {C}(G_{S_1} + G_{W_n}) \, .
\end{align*}
Therefore, we have
\begin{align*}
    |I_1 -\Tilde{D}_k| \leq \bigl(\tau+\frac{C}{\tau}(\epe+\epw)\bigl) D_k + \frac{C}{\tau}(G_{S_1} +G_{W_n}+ \sum_{j=0}^{k-1} D_j) \, .
\end{align*}
If $W_n=S_n$, then Lemma \ref{lemm reorganization} gives
\begin{align*}
    I_4 + I_5 &\leq C \Bigl(\big\||\st-\um| |\partial_{x_1} \so|\big\|_{L^2}^2 + \big\||\so-\um| |\partial_{x_1} \st|\big\|_{L^2}^2  \\ &\quad + \big\||\partial_{x_1} \st||\partial_{x_1} \so |\big\|_{L^2}^2\Bigl)
    \\ &=: \mathcal{E}_{S_n}  \, .   
\end{align*}
If $W_n = R_n$, then Lemma \ref{lemm reorganization} gives
\begin{align*}
    I_4 + I_5 &\leq C \Bigl(\|\partial_{x_1}^{2j_*} R_n\|_{L^2}^2  + \|\partial_{x_1} R_n\|_{L^4}^4 + \big\||\partial_{x_1} R_n || \partial_{x_1} \st|\big\|_{L^2}^2 \\
    &\quad + \big\||R_n-\um||\partial_{x_1} \st|\big\|_{L^2}^2 + \big\||\st-\um||\partial_{x_1} R_n|\big\|_{L^2}^2\Bigl) 
    \\ &=: \mathcal{E}_{R_n} \, ,
\end{align*}
for some $j_* \in \mathbb{N}^*$.

We get
\begin{equation}\label{h^m contraction ineq 4}
    \begin{aligned}
        \frac{d}{dt} \int \frac{|\partial_{x_1}^{\alpha_k}\psi|^2 }{2} \,dx+ C \sum_{j=1}^d \|\partial_x^{\alpha_k+e_j}\psi\|_{L^2}^2
        \leq \bigl(2\tau+\frac{C}{\tau}(\epe+\epw)\bigl)  D_k \\ + \frac{C}{\tau}(G_{S_1}+G_{W_n}+ \sum_{j=0}^{k-1} D_j + Y + \mathcal{E}_{W_n})\, .
    \end{aligned}
\end{equation}
Let
\begin{align*}
    \mathcal{A}_k = \bigl\{\alpha \in \mathbb{N}^d : |\alpha|=k\bigl\} \, .
\end{align*}
As $\alpha_k$ is arbitrary, we get
\begin{equation*}
    \begin{aligned}
        \frac{d}{dt} \sum_{\alpha\in \mathcal{A}_k}\int \frac{|\partial_{x_1}^{\alpha}\psi|^2 }{2} \,dx+ C_1 D_k
        \leq \bigl(2|\mathcal{A}_k|\tau+\frac{C_2}{\tau}(\epe+\epw)\bigl)  D_k \\ + \frac{C}{\tau}(G_{S_1}+G_{W_n}+ \sum_{j=0}^{k-1} D_j + Y + \mathcal{E}_{W_n})
        \, ,
    \end{aligned}
\end{equation*}
where $C_1, C_2>0$ are constants that depend only on $B_j,f,\eta,\ul,k$.
We take
\begin{equation}\label{choice of gamma}
    \tau = \textup{min}\bigl\{\frac{C_1}{6|\mathcal{A}_k|},1\bigl\} \, \textup{ and } \, \epe, \epw \leq \frac{C_1 \tau}{6 C_2} \, .
\end{equation}
Finally, we take integration over time. By (\ref{rarefaction time bound}) and Lemma \ref{interaction bound}, we get the estimates (\ref{h^m contraction ineq 3}). Hence, we prove (\ref{h^m contraction ineq 1}). The inequality (\ref{h^m contraction ineq 4}) also gives
\begin{align*}
    \big|\frac{d}{dt} \int {|\partial_{x_1}^{\alpha_k}\psi|^2 } \,dx \big|
    \leq {C}(G_{S_1}+G_{W_n}+ \sum_{j=0}^{k} D_j + Y + \mathcal{E}_{W_n}) \, .
\end{align*}
By taking integration over time, (\ref{rarefaction time bound}) and Lemma \ref{interaction bound} give (\ref{h^m contraction ineq 2}).
\end{proof}

\section{Appendix}
\subsection{Proof of Proposition \ref{prop BNS}}
The entropy of the 3-D barotropic Brenner-Navier-Stokes equations is 
\begin{align*}
    \eta= \rho \frac{|u|^2}{2} + Q  \, .
\end{align*}
We let 
\begin{align*}
    U=
    \begin{pmatrix}
    \rho \\ \rho u 
    \end{pmatrix}
    \, .
\end{align*}
We rewrite the 3-D barotropic Brenner-Navier-Stokes equations (\ref{BNS eqs}) w.r.t. $U$:
\begin{equation*}
    \begin{cases}
        \partial_t \rho + div(\rho u) = \Delta Q' \, , \\
        \partial_t \rho u + div(\rho u \otimes u) + \nabla \rho^\gamma = \nu\Delta u + div(u\otimes \nabla Q ') \, .
    \end{cases}
\end{equation*}
As
\begin{align*}
    \eta'(U)=
    \begin{pmatrix}
    Q'-\frac{|u|^2}{2} \\ u 
    \end{pmatrix}
    \, ,
\end{align*}
we get
\begin{align*}
    \Delta Q' = \Delta \eta_1 ' + \Delta \frac{|u|^2}{2} \, , 
\end{align*}
and for any $1\leq j\leq 3$,
\begin{align*}
    div(u_j \otimes \nabla Q ') &= \sum_{k=1}^3 \partial_k(u_j \partial_k Q') \\
    &= \sum_{k=1}^3 \partial_k(u_j \partial_k \eta_1') + \partial_k(u_j \partial_k \frac{|u|^2}{2}) \, .
\end{align*}
Hence, for any $1\leq j\leq 3$,
\begin{align*}
    B_j(U) = \begin{pmatrix}
    1& u_1 &u_2 &u_3 \\ u_1 & u_1^2 + \nu & u_1 u_2 & u_1 u_3 \\
    u_2 & u_1 u_2 & u_2^2 + \nu & u_2 u_3 \\ u_3 & u_1 u_3 & u_2 u_3 & u_3^2 + \nu
    \end{pmatrix}
    \, .
\end{align*}
Since the determinant of $B_j$ is $\nu^3$, $B_j$ is positive definite.

\bibliographystyle{abbrv}
\bibliography{refs}

\end{document}